\newtheorem{theorem}{Theorem}[section]
\newtheorem*{theorem*}{Theorem}
\newtheorem{lemma}[theorem]{Lemma}
\newtheorem{proposition}[theorem]{Proposition}
\newtheorem*{proposition*}{Proposition}
\newtheorem{corollary}[theorem]{Corollary}
\newtheorem*{corollary*}{Corollary}
\newtheorem*{conjecture*}{Conjecture}
\newtheorem{question}[theorem]{Question}
\newtheorem*{question*}{Question}
\newtheorem*{main:contain_symfin}{Theorem~\ref{thrm:contain_symfin}}
\newtheorem*{main:embed_fin_gen}{Theorem~\ref{thrm:embed_fin_gen}}
\newtheorem*{main:amenable}{Theorem~\ref{thrm:amenable}}
\theoremstyle{definition}
\newtheorem{definition}[theorem]{Definition}
\newtheorem{remark}[theorem]{Remark}
\newtheorem{example}[theorem]{Example}
\newtheorem{observation}[theorem]{Observation}
\newcommand{\Z}{\mathbb{Z}}
\newcommand{\N}{\mathbb{N}}
\newcommand{\shift}{\psi}
\newcommand{\slide}{\alpha}
\newcommand{\flip}{\lambda}
\newcommand{\tree}{\mathcal{T}}
\newcommand{\state}{\phi}
\newcommand{\Germs}{\mathcal{G}}
\DeclareMathOperator{\Aut}{Aut}
\DeclareMathOperator{\Sym}{Sym}
\DeclareMathOperator{\Symfin}{Sym_{fin}}
\DeclareMathOperator{\F}{F}
\DeclareMathOperator{\Stab}{Stab}
\DeclareMathOperator{\id}{id}
\numberwithin{equation}{section}
\begin{document}

\title{Houghton-like groups from ``shift-similar'' groups}
\date{\today}
\subjclass[2010]{Primary 20F65;   % GGT;
                 Secondary 20B07} % infinite permutation groups

\keywords{Houghton group, Thompson group, self-similar group, R\"over--Nekrashevych group, permutation group, amenability}

\author[B.~Mallery]{Brendan Mallery}
\address{Department of Mathematics, Tufts University, Medford, MA 02155}
\email{brendan.mallery@tufts.edu}

\author[M.~C.~B.~Zaremsky]{Matthew C.~B.~Zaremsky}
\address{Department of Mathematics and Statistics, University at Albany (SUNY), Albany, NY 12222}
\email{mzaremsky@albany.edu}

\begin{abstract}
We introduce and study \emph{shift-similar} groups $G\le\Sym(\N)$, which play an analogous role in the world of Houghton groups that self-similar groups play in the world of Thompson groups. We also introduce Houghton-like groups $H_n(G)$ arising from shift-similar groups $G$, which are an analog of R\"over--Nekrashevych groups from the world of Thompson groups. We prove a variety of results about shift-similar groups and these Houghton-like groups, including results about finite generation and amenability. One prominent result is that every finitely generated group embeds as a subgroup of a finitely generated shift-similar group, in contrast to self-similar groups, where this is not the case. This establishes in particular that there exist uncountably many isomorphism classes of finitely generated shift-similar groups, again in contrast to the self-similar situation.
\end{abstract}

\maketitle
\thispagestyle{empty}

\section*{Introduction}

In this paper, we introduce and study a new family of groups that we call \emph{shift-similar} groups. These are an analog of the well known family of self-similar groups, but instead of arising from actions on trees they arise from actions on the set of natural numbers $\N$. Where self-similar groups $G$ interact nicely with Higman--Thompson groups $V_d$ to produce R\"over--Nekrashevych groups $V_d(G)$, our shift-similar groups $G$ interact nicely with Houghton groups $H_n$ to produce another new family of groups $H_n(G)$. Thus, our construction provides an analogy in the world of Houghton groups for a number of important concepts from the world of Thompson groups.

A self-similar group $G$ is a group of automorphisms of a locally finite, regular, rooted tree, satisfying a certain rule that ensures that any element of $G$ still looks like an element of $G$ when restricted to certain canonical copies of the tree inside of itself. See Definition~\ref{def:self_similar} for more precision, and \cite{nekrashevych05} for a wealth of background. Under our new definition, a shift-similar group $G$ is a group of permutations of $\N$, satisfying a certain rule that ensures that any element of $G$ still looks like an element of $G$ when restricted to certain canonical copies of $\N$ inside of itself (see Definition~\ref{def:shift_similar} for more precision). The point is that both regular rooted trees and $\N$ contain canonical copies of themselves, and these groups are in some sense closed under restricting to these copies.

Self-similar groups $G$ naturally lead to so called R\"over--Nekrashevych groups $V_d(G)$. If $G$ is a self-similar group of automorphisms of the rooted $d$-ary tree, then $V_d(G)$ is the group of self-homeomorphisms of the boundary of the tree (which is a Cantor set) that arise as ``mash-ups'' of $G$ with the Higman--Thompson group $V_d$. For $G$ the Grigorchuk group, $V_2(G)$ was introduced by R\"over in \cite{roever99} and is now called the \emph{R\"over group}. The groups $V_d(G)$ in general were introduced by Nekrashevych in \cite{nekrashevych04}. The self-similarity of $G$ is what ensures that $V_d(G)$ is a group.

The Houghton groups $H_n$ were introduced by Houghton in \cite{houghton79}, and much like the Thompson groups have served as interesting examples and counterexamples in group theory. Given a shift-similar group $G\le \Sym(\N)$, the Houghton-like group $H_n(G)$ we introduce here is the group of self-bijections of $\{1,\dots,n\}\times\N$ that arise as ``mash-ups'' of $G$ with the Houghton group $H_n$. The shift-similarity of $G$ is what ensures that $H_n(G)$ is a group. A new model we use here for describing elements of $H_n$ and $H_n(G)$ via so called representative triples (see Subsections~\ref{ssec:thomp_triples} and~\ref{ssec:houghton_triples}) has notable similarities to a standard model used to describe elements of Thompson groups and their relatives. While Thompson groups and Houghton groups are quite different in many ways, and these analogous models have key differences, we believe this highlights some connections between the world of Thompson and Houghton groups.

The first main result we prove about shift-similar groups is the following.

\begin{main:contain_symfin}
If $G\le\Sym(\N)$ is any infinite shift-similar group, then $\Symfin(\N)\le G$.
\end{main:contain_symfin}

Here $\Symfin(\N)$ denotes the normal subgroup of $\Sym(\N)$ consisting of all elements with finite support, that is, those $g$ such that $g(i)=i$ for all but finitely many $i\in\N$.

This result indicates that, from a group theoretic standpoint, shift-similarity is actually quite different from self-similarity. For example, all self-similar groups are residually finite, but Theorem~\ref{thrm:contain_symfin} shows that infinite shift-similar groups never are. (It turns out that the only finite shift-similar groups are the symmetric groups $\Sym(\{1,\dots,m\})$ -- see Observation~\ref{obs:finite_boring} -- so our focus is on infinite shift-similar groups.)

Our second main result is the following.

\begin{main:embed_fin_gen}
For any finitely generated group $\Gamma$, there exists a finitely generated shift-similar group $G\le\Sym(\N)$ such that $\Gamma$ embeds as a subgroup of $G$.
\end{main:embed_fin_gen}

In fact we prove that the shift-similar group $G$ in Theorem~\ref{thrm:embed_fin_gen} can always be taken to be \emph{strongly shift-similar}, which means that a certain injective endomorphism of $G/\Symfin(\N)$ is an automorphism (see Definition~\ref{def:strong}).

This result implies that there exist uncountably many isomorphism classes of finitely generated strongly shift-similar groups (Corollary~\ref{cor:uncountable}), so strongly shift-similar groups form an especially robust class of groups. In Example~\ref{ex:lots_of_fg_ss}, we spell out an intriguing family of examples of finitely generated shift-similar groups that warrant further investigation in the future. Theorem~\ref{thrm:embed_fin_gen} also implies that there exist finitely generated strongly shift-similar groups with undecidable word problem (Corollary~\ref{cor:undecidable}). Note that Theorem~\ref{thrm:embed_fin_gen} contrasts with the self-similar case, since no non-residually finite group can embed in any self-similar group, and moreover there only exist countably many isomorphism classes of finitely generated self-similar groups \cite[Subsection~1.5.3]{nekrashevych05}. Despite these group theoretic differences, shift-similar groups and self-similar groups are closely related in many ways, and their definitions are remarkably analogous.

Serving as a ``Houghton world'' analog of self-similar groups indicates to us that shift-similar groups are important, because self-similar groups are undoubtedly important. Most famously, Grigorchuk's self-similar group served as the first known example of a group with intermediate growth, and of an amenable group that is not elementary amenable \cite{grigorchuk80,grigorchuk84}. As another important example, self-similar groups were used in \cite{grigorchuk00} to disprove a strong version of the Atiyah conjecture. R\"over--Nekrashevych groups of self-similar groups have also proven to have important applications, for instance they were shown in \cite{skipper19} to provide the first known family of simple groups with arbitrary finiteness length.

The Houghton groups $H_n$ and generalizations have also received a lot of attention recently. Just to highlight a few examples, work has been done on their conjugacy problem \cite{antolin15}, twisted conjugacy problem \cite{cox17}, metric properties and automorphisms \cite{burillo16}, centralizers and Bredon cohomological properties \cite{stjohngreen15}, and Bieri--Neumann--Strebel--Renz invariants \cite{zaremsky17F_n,zaremsky20}. Braided versions of the $H_n$, introduced in \cite{degenhardt00}, have also received recent attention, e.g., in \cite{genevois22} and \cite{bux20}, as have higher dimensional analogs due to Bieri and Sach \cite{bieri22}.

One of the key properties of the Houghton groups is that they are elementary amenable. Another main result we prove here is that the amenability of $H_n(G)$ is tied to that of $G$:

\begin{main:amenable}
Let $G\le\Sym(\N)$ be shift-similar. Then $H_n(G)$ is amenable if and only if $G$ is amenable.
\end{main:amenable}

We actually prove that the same result holds with ``amenable'' replaced by either ``elementary amenable'' or ``contains no non-abelian free subgroups.'' Some other results we prove include that the $H_n(G)$ themselves are shift-similar (Proposition~\ref{prop:HnG_shift_sim}), that $H_n(G)$ is finitely generated as soon as $G$ is (Corollary~\ref{cor:fin_gen} and Observation~\ref{obs:H1_fin_gen}), and that if $G$ is strongly shift-similar then $H_1(G)=G$ (Proposition~\ref{prop:H1_equal}).

This paper is a starting point, and there are many questions and avenues for further inquiry into shift-similar and Houghton-like groups. Some concrete questions we pose here include Question~\ref{quest:not_strong} on whether there exists an infinite shift-similar group that is not strongly shift-similar, and Questions~\ref{quest:inherit_fin_props} and~\ref{quest:examples_fin_props} regarding the higher finiteness properties of the $H_n(G)$. One potential connection is with the ``locally defined'' groups described by Farley and Hughes in \cite{farley24}, which encompass both Houghton groups and R\"over--Nekrashevych groups. We think there is a chance that this framework applies to the $H_n(G)$, and could potentially be useful for inspecting their higher finiteness properties. It also seems like shift-similar and Houghton-like groups could fit into a ``Houghton world'' analog of cloning systems, as in \cite{witzel18,zaremsky18,witzel19}.

This paper is organized as follows. In Section~\ref{sec:thompson} we recall the background on Higman--Thompson groups $V_d$, self-similar groups, and R\"over--Nekrashevych groups $V_d(G)$. In Section~\ref{sec:houghton} we recall the background on Houghton groups $H_n$, and establish our Thompson-esque model for representing elements of $H_n$. In Section~\ref{sec:shift_similar} we introduce shift-similar groups, inspect their properties, and provide examples. In Section~\ref{sec:houghton_like} we introduce the Houghton-like groups $H_n(G)$, including the model for representing their elements, and prove that, thanks to $G$ being shift-similar, the $H_n(G)$ really are groups. Finally, in Section~\ref{sec:properties} we inspect a number of properties of the $H_n(G)$.

\subsection*{Acknowledgments} Thanks are due to Matt Brin and Anthony Genevois for helpful comments and discussions, and to the anonymous referee for several good suggestions. The second author is supported by grant \#635763 from the Simons Foundation.

%-------------------------------------------------
\section{Preamble on Higman--Thompson groups, self-similar groups, and R\"over--Nekrashevych groups}\label{sec:thompson}

In this section we recall some background on Higman--Thompson groups, self-similar groups, and R\"over--Nekrashevych groups. We will do so in a way conducive to spelling out the analogy to Houghton groups, shift-similar groups, and our new Houghton-like groups in the coming sections.

\subsection{Higman--Thompson groups}\label{ssec:thomp}

Let $C_d=\{1,\dots,d\}^\N$ be the $d$-ary Cantor set, that is, the space of all infinite sequences $\kappa$ of elements of $\{1,\dots,d\}$ with the usual product topology. Let $\{1,\dots,d\}^*$ be the set of all finite strings $w$ of elements of $\{1,\dots,d\}$. Given $w\in \{1,\dots,d\}^*$, denote by $C_d(w)$ the \emph{cone} on $w$, defined via
\[
C_d(w)\coloneqq \{w\kappa\mid \kappa\in C_d\} \text{.}
\]
For any cone $C_d(w)$, there is a \emph{canonical homeomorphism} $h_w\colon C_d\to C_d(w)$ given by
\[
h_w \colon \kappa \mapsto w\kappa \text{.}
\]

\begin{definition}[Higman--Thompson group $V_d$]
The \emph{Higman--Thompson group} $V_d$ is the group of all homeomorphisms $f\colon C_d\to C_d$ defined by the following procedure:
\begin{enumerate}
    \item Take a partition of $C_d$ into finitely many cones $C_d(w_1^+),\dots,C_d(w_n^+)$.
    \item Take another partition of $C_d$ into the same number of cones $C_d(w_1^-),\dots,C_d(w_n^-)$.
    \item Map $C_d$ to itself bijectively by sending each $C_d(w_i^+)$ to some $C_d(w_j^-)$ via $h_{w_j^-} \circ h_{w_i^+}^{-1}$.
\end{enumerate}
\end{definition}

It turns out $V_d$ really is a group, i.e., a composition of homeomorphisms of this form is also of this form.

\subsection{Self-similar groups}\label{ssec:self_sim}

Now let us recall the definition of self-similar groups. See \cite{nekrashevych05} for a wealth of background. We will define them in a slightly non-standard way that will demonstrate the connection to shift-similar groups later. Let $\tree_d$ be the infinite rooted $d$-ary tree, so the vertex set of $\tree_d$ is $\{1,\dots,d\}^*$, the empty word $\varnothing$ is the root, and two vertices are adjacent if they are of the form $w$, $wi$ for some $i\in\{1,\dots,d\}$. For each $1\le i\le d$, let $\tree_d(i)$ be the induced subgraph of $\tree_d$ spanned by all vertices of the form $iw$ for $w\in\{1,\dots,d\}^*$. Note that $\tree_d(i)$ is naturally isomorphic to $\tree_d$, via the graph isomorphism
\[
\delta_i\colon \tree_d \to \tree_d(i)
\]
sending the vertex $w$ to $iw$.

Let $\Aut(\tree_d)$ be the group of automorphisms of $\tree_d$. Since the root is the only vertex of degree $d$ (the others have degree $d+1$), every automorphism fixes the root. Thus, every automorphism stabilizes the ``level-$1$ set'' of vertices $\{1,\dots,d\}$. In particular we get an epimorphism $\rho\colon \Aut(\tree_d)\to S_d$. Note that the kernel of $\rho$ is the subgroup of automorphisms fixing every level-$1$ vertex, which is isomorphic to $\Aut(\tree_d)^d$, and the map $\rho$ clearly splits, so we have
\[
\Aut(\tree_d) \cong S_d \ltimes \Aut(\tree_d)^d \text{,}
\]
or, more concisely, $\Aut(\tree_d) \cong S_d \wr \Aut(\tree_d)$. (It turns out to be convenient to write semidirect and wreath products with the acting group on the left.)

Now for each $1\le i\le d$ let
\[
\state_i \colon \Aut(\tree_d)\to\Aut(\tree_d)
\]
be the function (not homomorphism)
\[
\state_i(g)\coloneqq \delta_{\rho(g)(i)}^{-1}\circ g|_{\tree_d(i)}\circ \delta_i \text{.}
\]
Note that the image of $g|_{\tree_d(i)}\circ \delta_i$ is $\tree_d(\rho(g)(i))$, so the composition $\delta_{\rho(g)(i)}^{-1}\circ g|_{\tree_d(i)}\circ \delta_i$ makes sense and is again an element of $\Aut(\tree_d)$. To tie this to the above, the isomorphism $\Aut(\tree_d) \cong S_d \ltimes \Aut(\tree_d)^d$ can now be written as $g\mapsto (\rho(g),(\state_1(g),\dots,\state_d(g)))$.

\begin{definition}[Self-similar]\label{def:self_similar}
Let $G\le \Aut(\tree_d)$. We call $G$ \emph{self-similar} if for all $g\in G$ and all $1\le i\le d$ we have $\state_i(g)\in G$.
\end{definition}

\begin{example}[Grigorchuk group]\label{ex:grig}
Perhaps the most prominent example of a self-similar group is the Grigorchuk group, introduced by Grigorchuk in \cite{grigorchuk80}. This is the subgroup of $\Aut(\tree_2)$ generated by elements $a$, $b$, $c$, and $d$, described as follows. Identifying $\Aut(\tree_2)$ with $S_2\ltimes \Aut(\tree_2)^2$ via the above isomorphism, the elements $a$, $b$, $c$, and $d$ are defined recursively by $a=(1~2)(\id,\id)$, $b=(a,c)$, $c=(a,d)$, and $d=(\id,b)$. It turns out the Grigorchuk group has intermediate growth, and is amenable but not elementary amenable, serving as the first known example of such a group \cite{grigorchuk84}.
\end{example}

It turns out that there exist only countably many (isomorphism classes of) finitely generated self-similar groups, see, e.g., \cite[Subsection~1.5.3]{nekrashevych05}. This will contrast with the shift-similar situation later (see Corollary~\ref{cor:uncountable}).

\subsection{R\"over--Nekrashevych groups}\label{ssec:ro_nekr}

In this subsection we recall the definition of the R\"over--Nekrashevych group $V_d(G)$ of a self-similar group $G$. The first example, now called the \emph{R\"over group}, was introduced by R\"over in \cite{roever99} using the Grigorchuk group as $G$. The groups were introduced in full generality by Nekrashevych in \cite{nekrashevych04}.

\begin{definition}[R\"over--Nekrashevych group]\label{def:nekr}
Let $G\le \Aut(\tree_d)$ be self-similar. The \emph{R\"over--Nekrashevych group} $V_d(G)$ is the group of all homeomorphisms $f\colon C_d\to C_d$ defined by the following procedure:
\begin{enumerate}
    \item Take a partition of $C_d$ into finitely many cones $C_d(w_1^+),\dots,C_d(w_n^+)$.
    \item Take another partition of $C_d$ into the same number of cones $C_d(w_1^-),\dots,C_d(w_n^-)$.
    \item Map $C_d$ to itself bijectively by sending each $C_d(w_i^+)$ to some $C_d(w_j^-)$ via $h_{w_j^-} \circ g_i\circ h_{w_i^+}^{-1}$ for some $g_1,\dots,g_n\in G$.
\end{enumerate}
\end{definition}

Note that the only difference between this and the definition of the Higman--Thompson group $V_d$ is the presence of the elements $g_i$ in the last step. Intuitively, in $V_d$ we map one cone to another cone by identifying them both with $C_d$ via canonical homeomorphisms and then mapping $C_d$ to itself via the identity, whereas in $V_d(G)$ we do the same thing but map $C_d$ to itself via an element of $G$. Note that $C_d=\partial\tree_d$, so $\Aut(\tree_d)$ acts naturally on $C_d$, and this is the sense in which we are viewing elements of $G$ as homeomorphisms of $C_d$.

It turns out $V_d(G)$ really is a group, thanks to $G$ being self-similar. This is basically because, given two elements, up to refining partitions we can assume that the range partition of the first element is the same as the domain partition of the second element, and then composing the two elements is straightforward. Self-similarity of $G$ ensures that, when we refine the domain and range partitions of a given element, the resulting $g_i$ are still in $G$. This will also be made more precise in the next subsection.

\subsection{Representative triples}\label{ssec:thomp_triples}

In this subsection we discuss a convenient way of representing an element of $V_d$ or $V_d(G)$, via so called representative triples. This is the viewpoint taken for example in \cite{skipper21}. First note that if $C(w_1),\dots,C(w_n)$ is a partition of $C_d$ into cones, then $w_1,\dots,w_n$ are the leaves of a finite subtree of $\tree_d$. Thus, the step, ``take a partition of $C_d$ into $n$ cones,'' is equivalent to, ``take a finite subtree of $\tree_d$ with $n$ leaves.'' Here by a \emph{subtree} of $\tree_d$ we always mean one containing the root, and such that for any vertex $v$ in the subtree, if $vi$ is in the subtree for some $1\le i\le d$, then so is $vj$ for all $1\le j\le d$. Let us just say \emph{finite $d$-ary tree} for such a subtree from now on.

Now an element of $V_d(G)$ can be represented by a triple $(T_-,\sigma(g_1,\dots,g_n),T_+)$, where $T_+$ and $T_-$ are finite $d$-ary trees with $n$ leaves, $\sigma$ is an element of $S_n$, and the $g_i$ are elements of $G$. The element this triple represents is the one obtained by the procedure from Definition~\ref{def:nekr}, with $w_1^+,\dots,w_n^+$ the leaves of $T_+$, $w_1^-,\dots,w_n^-$ the leaves of $T_-$, and $\sigma$ the permutation sending $i$ to $j$ whenever $C(w_i^+)$ is being mapped to $C(w_j^-)$.

Call any $(T_-,\sigma(g_1,\dots,g_n),T_+)$ as above a \emph{representative triple}. An element of $V_d(G)$ can be represented by more than one representative triple, dictated by a certain equivalence relation. The key is the notion of an ``expansion'' of a representative triple.

\begin{definition}[Expansion]
Let $(T_-,\sigma(g_1,\dots,g_n),T_+)$ be a representative triple for an element of $V_d(G)$. Let $1\le k\le n$. The $k$th \emph{expansion} of this triple is the triple
\[
(T_-',\sigma'(g_1,\dots,g_{k-1},\state_1(g_k),\dots,\state_d(g_k),g_{k+1},\dots,g_n),T_+')
\]
defined as follows. The tree $T_+'$ is obtained from $T_+$ by adding a $d$-ary caret to the $k$th leaf, i.e., replacing the leaf $w$ with the leaves $w1,\dots,wd$. The tree $T_-'$ is obtained from $T_-$ by adding a $d$-ary caret to the $\sigma(k)$th leaf. The permutation $\sigma'$ is given by, roughly, replacing $k$ in the domain of $\sigma$ with a $d$-element set, replacing $\sigma(k)$ in the range of $\sigma$ with another copy of this $d$-element set, taking the former to the latter via $\rho(g_k)$, and acting on $\{1,\dots,k-1,k+1,\dots,n\}$ like $\sigma$. See \cite[Figure~1]{skipper21} for more precision about $\sigma'$.
\end{definition}

If one triple is an expansion of the other, call the second a \emph{reduction} of the first. Two triples are \emph{equivalent} if they can be obtained from each other by a finite sequence of expansions and reductions. It turns out two triples represent the same element of $V_d(G)$ if and only if they are equivalent, so we can view $V_d(G)$ as the set of equivalence classes, denoted $[T_-,\sigma(g_1,\dots,g_n),T_+]$, of representative triples $(T_-,\sigma(g_1,\dots,g_n),T_+)$.

The group operation on $V_d(G)$ is easy to realize using equivalence classes of representative triples. Given two elements of $V_d(G)$, say $[T_-,\sigma(g_1,\dots,g_n),T_+]$ and $[U_-,\tau(h_1,\dots,h_m),U_+]$, we can perform expansions until without loss of generality $T_+=U_-$ (so $m=n$). Then the group operation is given by
\[
[T_-,\sigma(g_1,\dots,g_n),T_+][T_+,\tau(h_1,\dots,h_n),U_+] = [T_-,\sigma(g_1,\dots,g_n)\tau(h_1,\dots,h_n),U_+]\text{.}
\]
Note that $\sigma(g_1,\dots,g_n)\tau(h_1,\dots,h_n)$ is a product of elements of $S_n \ltimes G^n$, and can be rewritten in the standard form we use in representative triples if desired, namely as $(\sigma\circ\tau)(g_{\tau(1)}h_1,\dots,g_{\tau(n)}h_n)$.

This concludes our preamble on the relevant aspects of the world of Thompson groups, setting the stage for our work establishing parallels to the world of Houghton groups. In the coming sections, we will discuss the Houghton groups in a way that makes clear the analogy to Higman--Thompson groups, then introduce shift-similar groups as a ``Houghton world'' analog of self-similar groups, and finally introduce our Houghton-like groups as an analog of R\"over--Nekrashevych groups.

%-------------------------------------------------
\section{Houghton groups}\label{sec:houghton}

Let $n\in\N$ and let $[n]\coloneqq \{1,\dots,n\}$. The Houghton group $H_n$, introduced in \cite{houghton79}, is the group of self-bijections of $[n]\times\N$ that are \emph{eventual translations}, that is, translations outside a finite subset. More precisely, a bijection $\eta\colon [n]\times\N \to [n]\times\N$ is in $H_n$ if there exist $m_1,\dots,m_n\in\Z$ and there exists a finite subset $M\subseteq [n]\times\N$ such that for all $(k,i)\in ([n]\times\N)\setminus M$ we have $\eta(k,i)=(k,i+m_k)$.

Let us set up an equivalent description of $H_n$, which is reminiscent of the description in Section~\ref{sec:thompson} of the Higman--Thompson groups $V_d$. First, for $1\le k\le n$, and $M$ a finite subset of $[n]\times\N$, define the \emph{quasi-ray} $Q(k,M)$ to be
\[
Q(k,M)\coloneqq (\{k\}\times \N)\setminus M \text{.}
\]
The name comes from viewing each $\{k\}\times\N$ as a ``ray,'' so a quasi-ray is the result of removing finitely many points from a ray. Each ray admits a canonical bijection with $\N$, given by enumerating $Q(k,M)$ as $Q(k,M)=\{(k,i_1),(k,i_2),\dots\}$, with $i_1<i_2<\cdots$. Let us denote the \emph{canonical bijection} corresponding to $k$ and $M$ by $\beta_{k,M}\colon \N\to Q(k,M)$, defined via
\[
\beta_{k,M}(j)\coloneqq (k,i_j)\text{,}
\]
with $i_j$ as above. Note that since $M$ is finite, for sufficiently large $j$ we have that if $\beta_{k,M}(j)=(k,i)$, then $\beta_{k,M}(j+1)=(k,i+1)$.

\begin{definition}[Houghton groups]\label{def:houghton}
The Houghton group $H_n$ is the group of bijections from $[n]\times\N$ to itself given as follows:
\begin{enumerate}
    \item Take a partition of $[n]\times\N$ into a finite subset $M_+$ and the corresponding quasi-rays $Q(1,M_+),\dots,Q(n,M_+)$.
    \item Take another partition of $[n]\times\N$ into a finite subset $M_-$ with $|M_-|=|M_+|$ and the corresponding quasi-rays $Q(1,M_-),\dots,Q(n,M_-)$.
    \item Map $[n]\times\N$ to itself by sending $M_+$ to $M_-$ via some bijection $\sigma$ and for each $1\le k\le n$ sending $Q(k,M_+)$ to $Q(k,M_-)$ via $\beta_{k,M_-}\circ \beta_{k,M_+}^{-1}$.
\end{enumerate}
\end{definition}

To see why this description recovers the previous one, first note that for any finite subset $M$ of $[n]\times\N$ and any quasi-ray $Q(k,M)$, there exists $m_k\in\Z$ such that for $i\in\N$ sufficiently large we have $\beta_{k,M}(i)=(k,i+m_k)$. Thus, each $\beta_{k,M_-}\circ \beta_{k,M_+}^{-1}$ as above is an eventual translation.

\subsection{Representative triples}\label{ssec:houghton_triples}

Just like with the Higman--Thompson groups, as we now show, elements of the Houghton groups can be represented by certain triples. First note that any partition of $[n]\times\N$ into a finite subset and $n$ quasi-rays is completely determined by the finite subset. Moreover, any bijection constructed as in Definition~\ref{def:houghton} is determined by the finite subsets $M_+$ and $M_-$, and the bijection $\sigma\colon M_+\to M_-$. Hence, any element of $H_n$ is completely determined by the \emph{representative triple} $(M_-,\sigma,M_+)$.

We should caution that there is a fundamental difference between this model for elements of Houghton groups and the one for Higman--Thompson groups. In both cases, a finite ``chunk'' is singled out in both the domain and the range, but in this Houghton model, $\sigma$ serves to map things in the finite chunk of the domain to things in the finite chunk of the range, whereas in the Higman--Thompson model $\sigma$ serves to map things outside the finite chunk of the domain to things outside the finite chunk of the range. This discrepancy will be further commented on in Remark~\ref{rmk:quasi_auts}. Thus, all the analogies we are discussing here between the Thompson and Houghton world should not be taken as a literal comparison of the groups themselves so much as a comparison between techniques used to analyze the groups.

An element of $H_n$ can be represented by more than one representative triple. For example, the identity is represented by any $(M,\id_M,M)$. The following notion of expansion will turn out to completely encode which triples represent the same element of $H_n$.

\begin{definition}[Expansion/reduction/equivalent]
Let $(M_-,\sigma,M_+)$ be a representative triple. The $k$th \emph{expansion} of the triple $(M_-,\sigma,M_+)$ is the triple
\[
(M_-\cup\{\beta_{k,M_-}(1)\},\sigma',M_+\cup\{\beta_{k,M_+}(1)\})\text{,}
\]
where $\sigma'$ is the bijection from $M_+\cup\{\beta_{k,M_+}(1)\}$ to $M_-\cup\{\beta_{k,M_-}(1)\}$ sending $\beta_{k,M_+}(1)$ to $\beta_{k,M_-}(1)$ and otherwise acting like $\sigma$. If one triple is an expansion of another, call the second a \emph{reduction} of the first. Call two representative triples \emph{equivalent} if one can be obtained from the other by a finite sequence of expansions and reductions. Write $[M_-,\sigma,M_+]$ for the equivalence class of $(M_-,\sigma,M_+)$.
\end{definition}

The intuition behind expansion is, we enlarge the finite set $M_+$ by adding in the ``first'' element of the domain quasi-ray $Q(k,M_+)$, and similarly add in the first element of the range quasi-ray $Q(k,M_-)$, and extend $\sigma$ to $\sigma'$ appropriately. Thus we change the partitions, but not the overall self-bijection of $[n]\times\N$.

There is a certain kind of move within an equivalence class of representative triples that will be useful to use later, and so is convenient to give a name to. We call it a general expansion, and it amounts to doing the same move as an expansion but using $\beta_{k,M_+}(j)$ and $\beta_{k,M_-}(j)$ for an arbitrary $j$ instead of just $j=1$.

\begin{definition}[General expansion/reduction]
Let $(M_-,\sigma,M_+)$ be a representative triple. The \emph{general expansion} of this triple associated to the pair $(k,j)$ is the triple
\[
(M_-\cup\{\beta_{k,M_-}(j)\},\sigma',M_+\cup\{\beta_{k,M_+}(j)\})\text{,}
\]
where $\sigma'$ is the bijection from $M_+\cup\{\beta_{k,M_+}(j)\}$ to $M_-\cup\{\beta_{k,M_-}(j)\}$ sending $\beta_{k,M_+}(j)$ to $\beta_{k,M_-}(j)$ and otherwise acting like $\sigma$. The reverse operation is called a \emph{general reduction}.
\end{definition}

\begin{lemma}\label{lem:gen_exp}
If one representative triple is a general expansion of the other, then they are equivalent.
\end{lemma}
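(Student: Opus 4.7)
The plan is to induct on $j$, where the general expansion in question is at the pair $(k,j)$. The base case $j=1$ is immediate: comparing the two definitions side by side, a general expansion at $(k,1)$ is literally an ordinary expansion at $k$.

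For the inductive step, let $(M_-,\sigma,M_+)$ be a triple and consider its general expansion at $(k,j)$ with $j\ge 2$. I will realize this general expansion as a composition of three moves, each of which is either a single ordinary expansion/reduction or, by the inductive hypothesis, a finite string of them. First, perform the ordinary expansion at $k$; this adjoins the pair $\beta_{k,M_+}(1)\leftrightarrow \beta_{k,M_-}(1)$. Second, apply the general expansion at $(k,j-1)$ to the resulting triple, which by the inductive hypothesis is itself equivalent to a sequence of ordinary expansions and reductions. The combinatorial identity $\beta_{k,M_+^{(1)}}(j-1)=\beta_{k,M_+}(j)$, with $M_+^{(1)}=M_+\cup\{\beta_{k,M_+}(1)\}$, together with the analogous identity on the $M_-$ side, ensures that this second step adjoins exactly the pair $\beta_{k,M_+}(j)\leftrightarrow \beta_{k,M_-}(j)$ that the target general expansion at $(k,j)$ is supposed to contribute. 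Third, perform an ordinary reduction removing the pair $\beta_{k,M_+}(1)\leftrightarrow \beta_{k,M_-}(1)$ contributed by the first step.

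The one thing that needs verification is the legality of the reduction in the third move: after the second step the $(+)$ side of the finite set is $M_+\cup\{\beta_{k,M_+}(1),\beta_{k,M_+}(j)\}$, and I need $\beta_{k,M_+}(1)$ to still occupy position $1$ of the quasi-ray obtained after its removal. This is the main (and rather mild) obstacle; it holds because every element of $\{k\}\times\N$ of strictly smaller index than $\beta_{k,M_+}(1)$ already lies in $M_+$, while the other adjoined element $\beta_{k,M_+}(j)$ sits strictly further out along the ray. The analogous check works on the $M_-$ side, so the reduction is legal, and the three moves together witness the required equivalence, closing the induction.
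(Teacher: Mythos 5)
Your proof is correct and takes essentially the same route as the paper's: induct on $j$, adjoin an auxiliary low-index point of the quasi-ray, use the identity $\beta_{k,M_+\cup\{p\}}(j-1)=\beta_{k,M_+}(j)$ (for $p$ an earlier point of the ray) to realize the addition of the $j$th point as a general expansion at $(k,j-1)$ covered by the inductive hypothesis, and then remove the auxiliary point. The only difference is the choice of auxiliary point --- you use $\beta_{k,M_\pm}(1)$ where the paper uses $\beta_{k,M_\pm}(j-1)$ --- which makes your first and third moves ordinary expansions/reductions rather than general ones requiring a further appeal to induction, a marginal simplification.
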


\begin{proof}
Say the first triple is $(M_-,\sigma,M_+)$ and the second is
\[
(M_-\cup\{\beta_{k,M_-}(j)\},\sigma',M_+\cup\{\beta_{k,M_+}(j)\})
\]
as above. Let us induct on $j$. If $j=1$ then this is just an expansion, and we are done. Now assume $j>1$. By induction $(M_-,\sigma,M_+)$ is equivalent via a general expansion to
\[
(M_-\cup\{\beta_{k,M_-}(j-1)\},\sigma'',M_+\cup\{\beta_{k,M_+}(j-1)\})\text{,}
\]
for appropriate $\sigma''$. Now observe that $\beta_{k,M_+\cup\{\beta_{k,M_+}(j-1)\}}(j-1)$ equals $\beta_{k,M_+}(j)$ (with a similar statement for the $M_-$ version), so again by induction our triple is equivalent via a general expansion to
\[
(M_-\cup\{\beta_{k,M_-}(j-1),\beta_{k,M_-}(j)\},\sigma''',M_+\cup\{\beta_{k,M_+}(j-1),\beta_{k,M_+}(j)\})\text{,}
\]
for appropriate $\sigma'''$. Next we note that $\beta_{k,M_+\cup\{\beta_{k,M_+}(j)\}}(j-1)$ equals $\beta_{k,M_+}(j-1)$ (with a similar statement for the $M_-$ version), so again by induction (now doing a general reduction) our triple is equivalent to
\[
(M_-\cup\{\beta_{k,M_-}(j)\},\sigma',M_+\cup\{\beta_{k,M_+}(j)\})\text{,}
\]
so we are done.
\end{proof}

Note that given any two representative triples $(M_-,\sigma,M_+)$ and $(N_-,\tau,N_+)$, we can perform a sequence of general expansions to each, say to get $(M_-',\sigma',M_+')$ and $(N_-',\tau',N_+')$ respectively, such that $M_+'=N_+'=M_+\cup N_+$. We can also perform some (other) sequence of general expansions until $M_+'=N_-'=M_+\cup N_-$, or $M_-'=N_+'=M_-\cup N_+$, or $M_-'=N_-'=M_-\cup N_-$. In particular, up to equivalence we can assume that one of the finite sets in the first triple equals one of the finite sets in the second triple, and we can pick which ones we want to be equal.

\begin{lemma}\label{lem:triples}
Two representative triples are equivalent if and only if the elements of $H_n$ they represent are equal.
\end{lemma}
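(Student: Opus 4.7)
The plan is to prove both directions. The forward direction (equivalent triples represent the same element) reduces to checking that a single expansion preserves the represented bijection, which is a pointwise verification. The reverse direction uses Lemma~\ref{lem:gen_exp} and the observation preceding this lemma: by performing general expansions we can arrange that the two triples share the same finite sets, at which point the permutations are forced to coincide.

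For the ``if'' direction, by induction on the length of the sequence of expansions/reductions it suffices to show that if $(M_-',\sigma',M_+')$ is the $k$th expansion of $(M_-,\sigma,M_+)$, then they represent the same bijection $\eta$ of $[n]\times\N$. Write $p\coloneqq \beta_{k,M_+}(1)$ and $q\coloneqq \beta_{k,M_-}(1)$, so $M_+'=M_+\cup\{p\}$ and $M_-'=M_-\cup\{q\}$. On $M_+$, $\sigma'$ agrees with $\sigma$, and $\sigma'(p)=q=\beta_{k,M_-}\circ\beta_{k,M_+}^{-1}(p)$, so the two triples agree on $M_+'$. For $k'\ne k$ the quasi-rays $Q(k',M_+)$ and $Q(k',M_+')$ coincide, as do the canonical bijections, so the two representations act identically there. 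On $Q(k,M_+')=Q(k,M_+)\setminus\{p\}$ one has $\beta_{k,M_+'}(j)=\beta_{k,M_+}(j+1)$ and similarly for the range side, so $\beta_{k,M_-'}\circ\beta_{k,M_+'}^{-1}$ agrees with $\beta_{k,M_-}\circ\beta_{k,M_+}^{-1}$ on $Q(k,M_+')$.

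For the ``only if'' direction, suppose $(M_-,\sigma,M_+)$ and $(N_-,\tau,N_+)$ both represent $\eta\in H_n$. I would enlarge the first triple, via a sequence of general expansions, to have domain finite set $M_+\cup N_+$: for each $p\in N_+\setminus M_+$, write $p=\beta_{k,M_+}(j)$ for suitable $k,j$ and apply the general expansion with parameter $(k,j)$, which is equivalence-preserving by Lemma~\ref{lem:gen_exp}. Each such general expansion adds $p$ to the domain finite set and adds $\beta_{k,M_-}(j)=\eta(p)$ to the range finite set (repeating this inductively, using that after each step the canonical bijections continue to be restrictions of the original $\eta$ to the relevant quasi-ray). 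Since the second triple represents $\eta$ and has $\tau\colon N_+\to N_-$ bijective, $\eta(N_+)=N_-$, so the resulting range finite set is $M_-\cup N_-$. Symmetrically, enlarge the second triple to have the same domain finite set $M_+\cup N_+$ and range finite set $M_-\cup N_-$. Both enlarged triples then have the same finite sets and a permutation between them that must equal $\eta|_{M_+\cup N_+}$, so they are literally identical. Therefore $(M_-,\sigma,M_+)$ and $(N_-,\tau,N_+)$ are equivalent.

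The main obstacle is the bookkeeping in the ``only if'' direction: one must verify that adding a point $p$ to the domain finite set via a general expansion really does add the correct point $\eta(p)$ to the range finite set, and that this remains true after several such expansions are performed in sequence (where the quasi-rays and their canonical bijections keep changing). This is handled by observing that a general expansion, by definition, pairs the $j$th point of a domain quasi-ray with the $j$th point of the corresponding range quasi-ray, and these two points are exactly related by $\eta$ because the triple being expanded represents $\eta$.
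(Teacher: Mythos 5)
Your proposal is correct and follows essentially the same route as the paper: a pointwise case check showing a single expansion preserves the represented bijection, and, for the converse, performing general expansions (justified by Lemma~\ref{lem:gen_exp}) until both triples have domain finite set $M_+\cup N_+$, at which point the range sets and bijections are forced to be $\eta(M_+\cup N_+)$ and $\eta|_{M_+\cup N_+}$, yielding a common general expansion. The only cosmetic issue is that your labels ``if''/``only if'' are swapped relative to the statement, but both implications are proved.
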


\begin{proof}
Let $\eta,\theta\in H_n$ have equivalent representative triples, and we want to show $\eta=\theta$. Without loss of generality the triples differ by a single expansion, say there is a representative triple of $\theta$ that is an expansion of one for $\eta$. Say $\eta$ has $(M_-,\sigma,M_+)$ as a representative triple and $\theta$ has $(M_-',\sigma',M_+')$, where $M_+'=M_+\cup\{\beta_{k,M_+}(1)\}$ and $M_-'=M_-\cup\{\beta_{k,M_-}(1)\}$. Now we want to show that $\theta(\ell,i)=\eta(\ell,i)$ for all $(\ell,i)\in [n]\times\N$. First suppose $(\ell,i)\in M_+$. Then since $\sigma'(\ell,i)=\sigma(\ell,i)$ we indeed have $\theta(\ell,i)=\eta(\ell,i)$. Next suppose $(\ell,i)=\beta_{k,M_+}(1)$ (so $\ell=k$). Then
\[
\theta(k,i)=\beta_{k,M_-}(1)=\beta_{k,M_-}\circ \beta_{k,M_+}^{-1}(k,i)=\eta(k,i)
\]
as desired. Finally suppose $(\ell,i)\in Q(\ell,M_+')$. If $\ell\ne k$ then $Q(\ell,M_+')=Q(\ell,M_+)$ and $Q(\ell,M_-')=Q(\ell,M_-)$, so
\[
\theta(\ell,i)=\beta_{\ell,M_-'}\circ \beta_{\ell,M_+'}^{-1}(\ell,i)=\beta_{\ell,M_-}\circ \beta_{\ell,M_+}^{-1}(\ell,i)=\eta(\ell,i) \text{.}
\]
Now suppose $\ell=k$, and say $(k,i)=\beta_{k,M_+'}(j)$. Note that $\beta_{k,M_+'}(j)=\beta_{k,M_+}(j+1)$ and $\beta_{k,M_-'}(j)=\beta_{k,M_-}(j+1)$. Thus
\begin{align*}
\theta(k,i)&=\beta_{k,M_-'}\circ \beta_{k,M_+'}^{-1}(k,i)\\
&=\beta_{k,M_-'}(j)\\
&=\beta_{k,M_-}(j+1)\\
&=\beta_{k,M_-} \circ \beta_{k,M_+}^{-1}\circ \beta_{k,M_+'}(j)\\
&=\beta_{k,M_-} \circ \beta_{k,M_+}^{-1}(k,i)\\
&= \eta(k,i) \text{,}
\end{align*}
and we are done.

Now let $(M_-,\sigma,M_+)$ and $(N_-,\tau,N_+)$ be representative triples for the same element $\eta$, and we claim they are equivalent. It suffices to show that they have a common general expansion. Note that both $(M_-,\sigma,M_+)$ and $(N_-,\tau,N_+)$ have general expansions of the form $(M_-',\sigma',M_+\cup N_+)$ and $(N_-',\tau',M_+\cup N_+)$ respectively, for some $M_-'$, $\sigma'$, $N_-'$, and $\tau'$. Since these both represent $\eta$ (by the first paragraph), we see that $M_-'$ and $N_-'$ both equal $\eta(M_+\cup N_+)$, hence are equal themselves. Also, $\sigma'$ and $\tau'$ each equal the restriction of $\eta$ to $M_+\cup N_+$, hence are equal. We conclude that $(M_-',\sigma',M_+\cup N_+)=(N_-',\tau',M_+\cup N_+)$ is a common general expansion of the original triples, as desired.
\end{proof}

We can now discuss the group structure of $H_n$ using equivalence classes of representative triples. Given $[M_-,\sigma,M_+]$ and $[N_-,\tau,N_+]$, the product $[M_-,\sigma,M_+][N_-,\tau,N_+]$ is given by first performing general expansions until without loss of generality $M_+=N_-$, and then declaring that
\[
[M_-,\sigma,M_+][M_+,\tau,N_+]\coloneqq [M_-,\sigma\circ\tau,N_+] \text{.}
\]
This clearly models the usual multiplication operation in $H_n$, so now we can view $H_n$ as the group of equivalence classes of representative triples.

\begin{remark}[Quasi-automorphisms]\label{rmk:quasi_auts}
As we have emphasized, this model for elements of $H_n$ differs from the one for $V_d$ in that, here we keep track of a bijection shuffling the finite ``chunks'' that have been singled out, whereas in $V_d$ we keep track of a way of shuffling the pieces outside the finite chunks (trees) that have been singled out. There is a Thompson-like group where we additionally keep track of a shuffling of the finite chunks, i.e., the vertices of the finite trees. This is the group $QV_d$ of \emph{quasi-automorphisms} of $\tree_d$. See \cite{lehnert08,nucinkis18} for more background, and see \cite[Proposition~3.2]{audino18} for a discussion of what we could call representative quadruples for elements of these groups (at least for $d=2$). It seems like one could also merge $QV_d$ with self-similar groups $G$ to get groups we could call quasi-R\"over--Nekrashevych groups $QV_d(G)$, which would also have elements describable using a similar model to the ones we are using here. We will leave any further analysis of $QV_d$ or $QV_d(G)$ (the latter of which has not been formally defined anyway) for future work.
\end{remark}

%-------------------------------------------------
\section{Shift-similar groups}\label{sec:shift_similar}

In this section we introduce shift-similar groups, as a ``Houghton world'' analog of the self-similar groups from Subsection~\ref{ssec:self_sim}. As before, $\Sym(\N)$ denotes the group of self-bijections of $\N$.

\subsection{The definition and some first examples}\label{ssec:def_first_ex}

\begin{definition}[Shift-similar, shifting maps]\label{def:shift_similar}
For each $j\in\N$, let
\[
s_j\colon \N\to\N\setminus\{j\}
\]
be the bijection sending $i$ to $i$ for all $1\le i<j$ and $i$ to $i+1$ for all $i\ge j$. For each $j\in\N$, let
\[
\shift_j\colon \Sym(\N)\to\Sym(\N)
\]
be the function (not homomorphism)
\[
\shift_j(g)\coloneqq s_{g(j)}^{-1}\circ g|_{\N\setminus\{j\}}\circ s_j \text{.}
\]
Note that the image of $g|_{\N\setminus\{j\}}\circ s_j$ is $\N\setminus\{g(j)\}$, so the composition $s_{g(j)}^{-1}\circ g|_{\N\setminus\{j\}}\circ s_j$ makes sense and is again an element of $\Sym(\N)$. Let $G$ be a subgroup of $\Sym(\N)$. Call $G$ \emph{shift-similar} if for all $g\in G$ and all $j\in\N$ we have $\shift_j(g)\in G$. Call $\shift_j$ the $j$th \emph{shifting map}.
\end{definition}

Intuitively, if we view $g\in G$ via a picture of the domain copy of $\N$, the range copy of $\N$, and an arrow from $i$ in the domain to $g(i)$ in the range, for each $i$, then $\shift_j(g)$ is obtained by deleting $j$ from the domain, deleting $g(j)$ from the range, erasing the arrow from $j$ to $g(j)$, and renumbering the resulting sets $\N\setminus\{j\}$ and $\N\setminus\{g(j)\}$ to again be $\N$. See Figure~\ref{fig:shift} for an example.

\begin{figure}[htb]\centering
 \begin{tikzpicture}[line width=1pt]
    \draw[dashed,->] (1,0) -- (3,1);
    \draw[->] (2,0) -- (5,1);
    \draw[->] (3,0) -- (1,1);
    \draw[->] (4,0) -- (2,1);
    \draw[->] (5,0) -- (6,1);
    \draw[->] (6,0) -- (4,1);
    
    \node at (1,-.3) {$1$}; 
    \node at (2,-.3) {$2$};
    \node at (3,-.3) {$3$};
    \node at (4,-.3) {$4$};
    \node at (5,-.3) {$5$};
    \node at (6,-.3) {$6$};
    \node at (1,1.3) {$1$};
    \node at (2,1.3) {$2$};
    \node at (3,1.3) {$3$};
    \node at (4,1.3) {$4$};
    \node at (5,1.3) {$5$};
    \node at (6,1.3) {$6$};
    
    \node at (6.5,0.5) {$\cdots$};
    
    \node at (8,0.5) {$\longrightarrow$};
    
   \begin{scope}[xshift=8cm]
    \draw[->] (1,0) -- (4,1);
    \draw[->] (2,0) -- (1,1);
    \draw[->] (3,0) -- (2,1);
    \draw[->] (4,0) -- (5,1);
    \draw[->] (5,0) -- (3,1);
    
    \node at (1,-.3) {$1$}; 
    \node at (2,-.3) {$2$};
    \node at (3,-.3) {$3$};
    \node at (4,-.3) {$4$};
    \node at (5,-.3) {$5$};
    \node at (1,1.3) {$1$};
    \node at (2,1.3) {$2$};
    \node at (3,1.3) {$3$};
    \node at (4,1.3) {$4$};
    \node at (5,1.3) {$5$};
    
    \node at (5.5,0.5) {$\cdots$};
   \end{scope}
 \end{tikzpicture}
\caption{An example of an element $g\in\Sym(\N)$ and its image under $\shift_1$. Here $g$ restricted to $\{1,\dots,6\}$ is the permutation $(1~3)(2~5~6~4)$, and $\shift_1(g)$ restricted to $\{1,\dots,5\}$ turns out to be the permutation $(1~4~5~3~2)$ (the restriction of $g$ to $\{7,8,\dots\}$ is left up to the reader's imagination). The dashed line indicates the arrow that gets deleted when computing $\shift_1$, namely the one originating at the domain copy of $1$.}\label{fig:shift}
\end{figure}

It is also useful to have the following in mind, which is a ``casewise'' definition for $\shift_j(g)$.
\[
\psi_j(g)(i) = \left\{\begin{array}{ll}
g(i) & \text{if } i<j \text{ and } g(i)<g(j) \\
g(i)-1 & \text{if } i<j \text{ and } g(i)>g(j) \\
g(i+1) & \text{if } i\ge j \text{ and } g(i+1)<g(j) \\
g(i+1)-1 & \text{if } i\ge j \text{ and } g(i+1)\ge g(j) \text{.}
\end{array}\right.
\]
Also note that, iterating shifting maps, shift-similarity asks that upon removing any finite subset of $\N$ and applying $g$ to the result, the induced element of $\Sym(\N)$ (after identifying the complement of the finite subsets of the domain and range with $\N$ via canonical bijections) again lies in $G$.

\begin{remark}
Given a group $G$, we will often refer to $G$ as \emph{shift-similar} if it admits an embedding into $\Sym(\N)$ such that the image (which is isomorphic to $G$) is shift-similar. We may call this a \emph{shift-similar representation} of the group. Thus we can ask, ``is this group shift-similar?'' even if it is not handed to us as a subgroup of $\Sym(\N)$. Note that a shift-similar group may admit an embedding into $\Sym(\N)$ that is not shift-similar, but we still call it shift-similar if it admits at least one embedding whose image is. (All of these sorts of terminological caveats also hold for self-similar groups of automorphisms of trees.)
\end{remark}

Let us discuss some initial properties of shifting maps. This first result actually shows that the $\shift_j$ exhibit some sort of ``Thompson-like'' behavior. (We will not investigate this Thompson-like behavior further here; in particular the fact that the $\shift_j$ are not injective makes it unclear what the actual connection to Thompson's groups might be.)

\begin{lemma}\label{lem:two_shifts}
For any $j\le j'$ we have $\shift_{j'} \circ \shift_j = \shift_j \circ \shift_{j'+1}$.
\end{lemma}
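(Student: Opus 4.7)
The plan is to unpack both sides of the identity using the definition $\shift_j(g) = s_{g(j)}^{-1}\circ g|_{\N\setminus\{j\}}\circ s_j$ and verify that the resulting expressions agree. The key auxiliary fact I would establish first is a commutation identity for the maps $s_j$: for $a\le b$ one has $s_a\circ s_b = s_{b+1}\circ s_a$, as is easily checked by splitting into the cases $i<a$, $a\le i<b$, and $i\ge b$. By relabeling, the equivalent form for $b<a$ is $s_a\circ s_b = s_b\circ s_{a-1}$.

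Next I would expand the LHS. Using $j\le j'$, we have $s_j(j')=j'+1$, which lets us identify $\shift_j(g)(j')=s_{g(j)}^{-1}(g(j'+1))$ and rewrite
\[
\shift_{j'}(\shift_j(g)) = s_{\shift_j(g)(j')}^{-1}\circ s_{g(j)}^{-1}\circ g|_{\N\setminus\{j,j'+1\}}\circ s_j\circ s_{j'}\text{.}
\]
Similarly, using $j<j'+1$ (so $s_{j'+1}(j)=j$), I would compute $\shift_{j'+1}(g)(j)=s_{g(j'+1)}^{-1}(g(j))$ and rewrite
\[
\shift_j(\shift_{j'+1}(g)) = s_{\shift_{j'+1}(g)(j)}^{-1}\circ s_{g(j'+1)}^{-1}\circ g|_{\N\setminus\{j,j'+1\}}\circ s_{j'+1}\circ s_j\text{.}
\]
The middle factor $g|_{\N\setminus\{j,j'+1\}}$ matches on both sides, which is the source of the intuition: both compositions correspond to deleting inputs $j$ and $j'+1$ together with their images $g(j)$ and $g(j'+1)$, just in different orders.

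It then remains to match the pre- and post-composed pieces. On the input side, $s_j\circ s_{j'} = s_{j'+1}\circ s_j$ is precisely the commutation identity with $a=j$, $b=j'$. On the output side, after inverting I need $s_a\circ s_{s_a^{-1}(b)} = s_b\circ s_{s_b^{-1}(a)}$ where $a=g(j)$, $b=g(j'+1)$, $a\ne b$. This is the main obstacle, and I would dispatch it by splitting on whether $a<b$ or $a>b$. If $a<b$, then $s_a^{-1}(b)=b-1$ and $s_b^{-1}(a)=a$, so the required equality reduces to $s_a\circ s_{b-1}=s_b\circ s_a$, which is the commutation identity (noting $a\le b-1$). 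The case $a>b$ is symmetric, using the reversed form $s_a\circ s_b = s_b\circ s_{a-1}$. Combining all of this yields the desired equality of compositions.
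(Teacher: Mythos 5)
Your proof is correct, and it takes a genuinely different route from the paper. The paper explicitly declines to ``wade through a long list of cases'' and instead argues pictorially: deleting the arrow at $j$ first and then the arrow that has become the $j'$th is the same as deleting the arrow at $j'+1$ first and then the one at $j$. You instead give a fully algebraic verification, and the organizing device that makes it manageable is the commutation identity $s_a\circ s_b=s_{b+1}\circ s_a$ for $a\le b$ (with its relabeled form $s_a\circ s_b=s_b\circ s_{a-1}$ for $b<a$). Once both sides are unpacked to
\[
s_{\ast}^{-1}\circ s_{\ast}^{-1}\circ g|_{\N\setminus\{j,j'+1\}}\circ(\text{two }s\text{'s})\text{,}
\]
the input-side factors match by one application of the identity, and the output-side factors match after the case split on whether $g(j)<g(j'+1)$ or $g(j)>g(j'+1)$, which correctly tracks how $s_{g(j)}^{-1}$ and $s_{g(j'+1)}^{-1}$ renumber each other's deleted points. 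Your computations of $s_a^{-1}(b)$ and $s_b^{-1}(a)$ in each case check out. What your approach buys is a self-contained, verifiable proof that a referee cannot quibble with, and the identity $s_a\circ s_b=s_{b+1}\circ s_a$ is a reusable tool (it is essentially the same combinatorics that reappears in Lemma~\ref{lem:gen_exp}); what it costs is length and the loss of the ``deleting arrows'' intuition that the paper leans on throughout.
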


\begin{proof}
Rather than wade through a long list of cases, we will appeal to the intuitive picture. For a given $g\in G$, if we delete $j$ from the domain, delete $g(j)$ from the range, delete the arrow from $j$ to $g(j)$, and renumber appropriately, then what used to be the arrow from $j'+1$ to $g(j'+1)$ becomes an arrow from $j'$ to some range point (either $g(j'+1)-1$ or $g(j'+1)$ depending on where $g(j'+1)$ was in relation to $g(j)$). Hence, deleting the arrow originating at $j'$ after having already deleted the arrow originating at $j$ is equivalent to first deleting the arrow originating at $j'+1$ and then deleting the arrow originating at $j$.
\end{proof}

We also have that, while each shifting map $\shift_j$ is not a homomorphism, the family of all shifting maps does satisfy the following nice property reminiscent of homomorphisms (or really cocycles):

\begin{observation}\label{obs:almost_hom}
For any $j\in\N$ and any $g,h\in \Sym(\N)$, we have
\[
\shift_j(g\circ h)=\shift_{h(j)}(g)\circ \shift_j(h) \text{.}
\]
\end{observation}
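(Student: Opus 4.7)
The plan is a direct computation from the definition, with the main step being to recognize that an $s_{h(j)} \circ s_{h(j)}^{-1}$ factor cancels in the middle.

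First, I would expand both sides using the defining formula $\shift_j(g) = s_{g(j)}^{-1} \circ g|_{\N\setminus\{j\}}\circ s_j$. The left-hand side becomes
\[
\shift_j(g\circ h) = s_{g(h(j))}^{-1}\circ (g\circ h)|_{\N\setminus\{j\}}\circ s_j,
\]
while the right-hand side, after writing out both factors, is
\[
\shift_{h(j)}(g)\circ \shift_j(h) = s_{g(h(j))}^{-1}\circ g|_{\N\setminus\{h(j)\}}\circ s_{h(j)}\circ s_{h(j)}^{-1}\circ h|_{\N\setminus\{j\}}\circ s_j.
\]

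Next I would simplify the middle of the right-hand side. Since $s_{h(j)}\colon\N\to\N\setminus\{h(j)\}$ is a bijection, the composition $s_{h(j)}\circ s_{h(j)}^{-1}$ is the identity on $\N\setminus\{h(j)\}$. This identity is being applied to the image of $h|_{\N\setminus\{j\}}$, which is exactly $\N\setminus\{h(j)\}$ (because $h$ is a bijection with $h(j)$ being the unique element mapped to $h(j)$), and it feeds into $g|_{\N\setminus\{h(j)\}}$, whose domain is $\N\setminus\{h(j)\}$. So the cancellation is legitimate and yields
\[
\shift_{h(j)}(g)\circ \shift_j(h) = s_{g(h(j))}^{-1}\circ g|_{\N\setminus\{h(j)\}}\circ h|_{\N\setminus\{j\}}\circ s_j.
\]
Finally, since $h|_{\N\setminus\{j\}}$ takes values in $\N\setminus\{h(j)\}$, one has $g|_{\N\setminus\{h(j)\}}\circ h|_{\N\setminus\{j\}} = (g\circ h)|_{\N\setminus\{j\}}$, which matches the left-hand side.

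There is no real obstacle; the only thing to be careful about is bookkeeping of domains and codomains so that the restrictions and the maps $s_i$ compose as claimed. In particular one must verify that the image of $s_{h(j)}$ is precisely $\N\setminus\{h(j)\}$, which is exactly the set on which $s_{h(j)}^{-1}$ is defined, and that this matches the image of $h|_{\N\setminus\{j\}}$ and the domain of $g|_{\N\setminus\{h(j)\}}$. Once these match, the proof is a one-line cancellation.
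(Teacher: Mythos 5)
Your proof is correct and is essentially identical to the paper's own argument: both expand the two sides via the definition of $\shift_j$, cancel the $s_{h(j)}\circ s_{h(j)}^{-1}$ factor in the middle, and identify $g|_{\N\setminus\{h(j)\}}\circ h|_{\N\setminus\{j\}}$ with $(g\circ h)|_{\N\setminus\{j\}}$. Your extra attention to matching domains and codomains is just a more explicit version of what the paper leaves implicit.
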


\begin{proof}
By definition we have
\begin{align*}
\shift_j(g\circ h)&= s_{g(h(j))}^{-1}\circ (g\circ h)|_{\N\setminus\{j\}}\circ s_j\\
&= s_{g(h(j))}^{-1}\circ g|_{\N\setminus\{h(j)\}}\circ s_{h(j)} \circ s_{h(j)}^{-1} \circ h|_{\N\setminus\{j\}}\circ s_j\\
&= \shift_{h(j)}(g)\circ \shift_j(h) \text{.}
\end{align*}
\end{proof}

This also lets us pin down how shifting maps interact with inverses.

\begin{corollary}\label{cor:inverse}
For any $j\in\N$ and any $g\in \Sym(\N)$, we have
\[
\shift_j(g)^{-1} = \shift_{g(j)}(g^{-1}) \text{.}
\]
\end{corollary}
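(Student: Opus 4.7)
The plan is to derive this directly from Observation~\ref{obs:almost_hom} by a single substitution. Specifically, I would apply the observation with $g^{-1}$ in the role of $g$ and $g$ in the role of $h$, obtaining
\[
\shift_j(g^{-1}\circ g) = \shift_{g(j)}(g^{-1})\circ \shift_j(g) \text{.}
\]
The left-hand side is $\shift_j(\id)$, and a quick unwinding of the definition gives $\shift_j(\id) = s_{\id(j)}^{-1}\circ \id|_{\N\setminus\{j\}}\circ s_j = s_j^{-1}\circ s_j = \id$. Therefore
\[
\id = \shift_{g(j)}(g^{-1})\circ \shift_j(g) \text{,}
\]
and left-composing both sides with $\shift_j(g)^{-1}$ (or equivalently reading off the inverse) yields the desired identity $\shift_j(g)^{-1}=\shift_{g(j)}(g^{-1})$.

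There is essentially no obstacle here beyond correctly matching the indices in Observation~\ref{obs:almost_hom}: one needs to substitute so that the index on the outer shifting map on the right-hand side becomes $g(j)$, which forces the choice of $g^{-1}$ for the outer factor and $g$ for the inner factor (and not the other way around). The one small verification worth mentioning explicitly in the written proof is $\shift_j(\id)=\id$, which is immediate from the definition of $\shift_j$ and the fact that $s_j^{-1}\circ s_j$ is the identity on $\N$.
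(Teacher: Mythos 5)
Your proof is correct and is essentially identical to the paper's: both apply Observation~\ref{obs:almost_hom} to $g^{-1}\circ g$ with the same assignment of roles and use $\shift_j(\id)=\id$. The only difference is that you spell out the verification of $\shift_j(\id)=\id$ from the definition, which the paper simply asserts as clear.
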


\begin{proof}
Clearly $\shift_j(\id)=\id$ for all $j$. By Observation~\ref{obs:almost_hom}, we get
\[
\id = \shift_j(g^{-1}\circ g) = \shift_{g(j)}(g^{-1}) \circ \shift_j(g) \text{,}
\]
so indeed $\shift_j(g)^{-1} = \shift_{g(j)}(g^{-1})$.
\end{proof}

Now we discuss some immediate examples of shift-similar groups. The trivial group is the most obvious example, and more generally $S_m=\Sym(\{1,\dots,m\})$ for any $m\in\N$, but we will be more interested in infinite shift-similar groups. In fact the $S_m$ are the only finite shift-similar groups, as we now show.

\begin{lemma}\label{lem:finite_shift_sim}
Let $G\le \Sym(\N)$ be a finite shift-similar group. Then $G=S_m$ for some $m$.
\end{lemma}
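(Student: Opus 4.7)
The plan is to prove the lemma in three steps: first show every $g\in G$ has finite support; then deduce $G\le S_m$ for some $m$; finally show $G$ must equal $S_m$ by induction on $m$. For the first (and hardest) step I would argue by contradiction. Suppose $g\in G$ has infinite support; since $G$ is finite, the map $j\mapsto\shift_j(g)$ has finite image, so by pigeonhole there exist $h\in G$ and an infinite set $A\subseteq\N$ with $\shift_j(g)=h$ for all $j\in A$. Picking $j_1<j_2$ in $A$ and equating $\shift_{j_1}(g)(i)=\shift_{j_2}(g)(i)$ via the casewise formula yields, for each $j_1\le i<j_2$, that $g(i+1)-g(i)\in\{-1,0,1\}$; since $g$ is injective this forces $g(i+1)-g(i)=\pm 1$. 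Fixing $j_1$ and letting $j_2$ vary over the infinite set $A$, this extends to all $i\ge j_1$, so the tail $(g(j_1),g(j_1+1),\dots)$ is a nearest-neighbor walk on $\N$ visiting $\N\setminus g(\{1,\dots,j_1-1\})$ bijectively. An elementary Hamiltonian-path argument on the path graph $\N$ (an infinite Hamiltonian path on an induced subgraph with cofinite vertex set exists only when the removed set is an initial segment, and the path is then forced to run rightward from its minimum) forces $g(\{1,\dots,j_1-1\})=\{1,\dots,j_1-1\}$ and $g$ to be the identity on $\{j_1,j_1+1,\dots\}$, contradicting infinite support.

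Given Step 1 and the finiteness of $G$, setting $m:=\max_{g\in G}\max\Supp(g)$ makes $G\le S_m$, which is Step 2. For Step 3 I would induct on the minimal $m$ with $G\le S_m$. The stabilizer $H:=G\cap S_{m-1}$ is itself a finite shift-similar group (a direct check on the casewise formula shows $\shift_j(H)\subseteq H$) contained in $S_{m-1}$, so by induction $H=S_k$ for some $k\le m-1$. I claim $k=m-1$. If not, pick $g\in G$ with $g(m)=\ell\ne m$ (which exists by minimality of $m$); the condition $\shift_m(g)\in S_k$ pins $g|_{\{k+1,\dots,m-1\}}$ into a rigid shape, essentially making $g$ the cycle $(\ell,\ell+1,\dots,m)$ times an element of $S_k$. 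Examining $\shift_{m-1}(g)\in S_k$ then yields a contradiction in every sub-case except $\ell=m-1$. In that remaining case $g=\pi\cdot(m-1,m)$ for some $\pi\in S_k\le G$, forcing $(m-1,m)\in G$, and then $\shift_j((m-1,m))=(m-2,m-1)$ for any $j<m-1$ is an element of $H=S_k$ moving $m-1>k$, the final contradiction. Hence $S_{m-1}\le G$, and combined with the existence of some $g\in G$ moving $m$, an orbit-stabilizer argument (using transitivity of $S_{m-1}$ on $\{1,\dots,m-1\}$) yields $|G|=m!$ and $G=S_m$.

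The main obstacle is Step 1: extracting the local $\pm 1$-step rigidity from a mere coincidence of $\shift_j(g)$ values for infinitely many $j$, and then deducing finite support via the Hamiltonian-path argument. Step 3, while more bookkeeping than insight, still requires a careful case-by-case inspection of the shape $\shift_m(g)\in S_k$ forces on $g$.
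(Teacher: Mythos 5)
Your proof is correct, but it takes a genuinely different route from the paper's. The paper keys everything to the orbit $G.1$: letting $m$ be its maximum, it shows $G\le S_m$ by iterating $\shift_1$ to drag a hypothetical pair $m\le i<j$ with $g(i)=j$ down to $i=m$ and contradict maximality; then in the induction it chooses $g$ with $g(1)=m$, observes $\shift_m(g)(1)=m-1$, and thereby hands the inductive hypothesis exactly what it needs to get $G\cap S_{m-1}=S_{m-1}$ in one stroke, finishing with the explicit identity $g\circ(1~i)\circ g'\in S_{m-1}$. You instead isolate the intermediate fact that every element of a finite shift-similar group has finite support, proved by pigeonholing $\shift_j(g)$ over infinitely many $j$ and extracting the $\pm1$-step rigidity; this is a clean standalone observation the paper never states, and your argument for it is sound (the visited set of a self-avoiding nearest-neighbor walk on $\N$ is always an interval, so after one step the direction is forced, whence $g(i)=i$ for all $i\ge j_1$). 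The price is paid in your Step 3: because your induction does not track the orbit of $1$, you must rule out $G\cap S_{m-1}=S_k$ for $k<m-1$ by the case analysis on $\ell=g(m)$. I checked that this goes through: the sub-cases $\ell\le k$ and $k<\ell<m-1$ both die on $\shift_{m-1}(g)(m-1)=\ell\ne m-1$, and the surviving case $\ell=m-1$ dies on $\shift_j((m-1~m))=(m-2~m-1)\notin S_k$ (with the degenerate case $m=2$ handled directly). In short, both arguments are valid; the paper's is slicker because the orbit of $1$ does double duty, while yours buys a reusable structural fact at the cost of a longer induction.
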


\begin{proof}
Let $m$ be the maximum element of the orbit $G.1$, which exists since $G$ is finite. First we claim that $G\le S_m$. If $G$ is not contained in $S_m$, then there exist $i,j\in\N$ and $g\in G$ such that $m\le i<j$ and $g(i)=j$. Since $1\le i$ and $g(1)\le j$, we have $\shift_1(g)(i-1)=j-1$, and doing this $i-m$ times we see that without loss of generality $i=m$. But this means $G.1$ contains $j$, contradicting the maximality of $m$. We conclude $G\le S_m$. Now we need to show that $G$ is all of $S_m$. We induct on $m$; the base case $m=1$ holds trivially, so let $m>1$. Choose $g\in G$ such that $g(1)=m$. This implies that $\shift_m(g)(1)=m-1$, since $1<m$ and $g(1)>g(m)$. Set $H\coloneqq G\cap S_{m-1}$, so $H$ is shift-similar. We have $\shift_m(g)\in H$, so $m-1$ is the maximum element of $H.1$. By induction, $H=S_{m-1}$. Now for any $g'\in S_m\setminus S_{m-1}$, say with $g'(m)=i<m$, we have $g\circ (1~i)\circ g' \in S_{m-1}\le G$. Since $g,(1~i)\in G$, so we conclude $g'\in G$, and so $S_m\le G$ as desired.
\end{proof}

Having completely classified finite shift-similar groups, we turn our attention to infinite shift-similar groups. We will discuss more complicated examples later, but for now let us point out a few obvious (but important) ones.

\begin{example}[Finitely supported permutations]
The most obvious example of an infinite shift-similar group is $\Symfin(\N)$. Indeed, $\Symfin(\N)$ is the directed union of the $S_m$, and it is clear that a directed union of shift-similar groups is shift-similar.
\end{example}

\begin{example}[Eventually periodic permutations]\label{ex:eventually_periodic}
For $n\in\N$, let $E_n\le \Sym(\N)$ be the group of all eventually $n$-periodic permutations, that is all $g$ satisfying $g(i+n)=g(i)+n$ for all sufficiently large $i$. (Note that $E_1$ is exactly $\Symfin(\N)$.) It is an easy exercise to check that $E_n$ is shift-similar.
\end{example}

In fact, $E_n$ is closely related to the Houghton group $H_n$, as the following shows.

\begin{lemma}\label{lem:even_per_houghton}
The group $E_n$ is isomorphic to $S_n \ltimes H_n$.
\end{lemma}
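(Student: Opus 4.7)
The plan is to identify $\N$ with $[n]\times\N$ via the bijection $(j-1)n + k \leftrightarrow (k,j)$ for $k\in[n]$, $j\ge 1$, and to produce a split short exact sequence $1 \to H_n \to E_n \to S_n \to 1$. Under this identification, an eventual translation $h\in H_n$ with parameters $m_1,\ldots,m_n\in\Z$ satisfies $h(k,j)=(k,j+m_k)$ for $j$ large, which on $\N$ reads $h((j-1)n+k)=(j-1+m_k)n+k$ for $j$ large. This is plainly eventually $n$-periodic, so $H_n$ sits inside $E_n$ as a subgroup. I expect the main obstacle to be purely notational—keeping the two indexing conventions straight and being careful about the ambiguity of residues modulo $n$.

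For the splitting, I would define $\psi\colon S_n\to E_n$ by $\psi(\sigma)((j-1)n+k) = (j-1)n+\sigma(k)$; the resulting bijection is genuinely $n$-periodic (not merely eventually so), so $\psi(\sigma)\in E_n$, and $\psi$ is clearly a homomorphism. For the projection, given $g\in E_n$ pick $N$ such that $g(i+n)=g(i)+n$ for all $i\ge N$. Then for $i\ge N$, the residue class of $g(i)$ modulo $n$ depends only on the residue class of $i$, which defines a function $\sigma_g\colon[n]\to[n]$ (using $\{1,\ldots,n\}$ as representatives). Bijectivity of $g$ together with the eventual periodicity forces $\sigma_g$ to be a permutation, since otherwise some residue class would be missed or hit twice by $g$ cofinitely. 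Setting $\varphi(g)=\sigma_g$ gives a homomorphism, as for large $i$ the value $g_2(i)$ is also large, so the composition of the eventually periodic parts yields the composite residue permutation.

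It remains to show $\ker\varphi=H_n$ and $\varphi\circ\psi=\id_{S_n}$. For the kernel, if $\sigma_g=\id$ then writing $i=(j-1)n+k$ with $j$ large, $g(i)$ lies in the residue class of $k$, so $g(i)=(j'-1)n+k$ for some $j'$; the relation $g(i+n)=g(i)+n$ forces $j'=j+m_k$ with $m_k$ independent of $j$, and translating back via the bijection yields an eventual translation, so $g\in H_n$. The converse was shown in the first paragraph, and $\varphi\circ\psi=\id_{S_n}$ is immediate. Splitness of the short exact sequence then gives $E_n\cong S_n\ltimes H_n$, with the $S_n$-action on $H_n$ arising from conjugation by $\psi(\sigma)$, which a direct computation shows permutes the translation parameters via $(m_k)_k\mapsto(m_{\sigma^{-1}(k)})_k$.
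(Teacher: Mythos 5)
Your proof is correct and takes essentially the same approach as the paper: both use the bijection $(k,j)\leftrightarrow k+(j-1)n$ between $[n]\times\N$ and $\N$ and the observation that eventual $n$-periodicity corresponds exactly to permuting the rays while eventually translating along each. The only difference is organizational---you package the conclusion as a split short exact sequence $1\to H_n\to E_n\to S_n\to 1$, whereas the paper directly identifies the conjugate of $E_n$ with the concrete copy of $S_n\ltimes H_n$ inside $\Sym([n]\times\N)$; your version spells out the surjectivity and kernel computations that the paper leaves as ``easy to see.''
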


\begin{proof}
Consider the bijection $\xi\colon [n]\times\N \to \N$ sending $(k,i)$ to $k+(i-1)n$. Conjugation by $\xi$ gives an isomorphism $\omega\colon \Sym(\N)\to \Sym([n]\times\N)$, via $\omega(g)=\xi^{-1}\circ g\circ \xi$. We claim that the image of $E_n$ under $\omega$ is the natural copy of $S_n\ltimes H_n$ in $\Sym([n]\times\N)$, where $S_n$ acts on $[n]$ and $H_n$ is the usual copy of $H_n$.

Given $g\in E_n$, we have $\omega(g)(k,i)=\xi^{-1}(g(k+(i-1)n))$. For sufficiently large $i$, we have $\omega(g)(k,i+1)=\xi^{-1}(g(k+in))=\xi^{-1}(g(k+(i-1)n)+n)$. This shows that for sufficiently large $i$, if $\omega(g)(k,i)=(\ell,j)$ then $\omega(g)(k,i+1)=(\ell,j+1)$. Hence, $\omega(g)$ acts on $[n]\times\N$ by permutations in the first entry and eventual translations in the second entry, so $\omega(g)\in S_n\ltimes H_n$. This shows that $\omega$ sends $E_n$ into $S_n\ltimes H_n$, and it is easy to see that the image of $E_n$ is all of $S_n\ltimes H_n$, so we conclude that $\omega$ restricts to an isomorphism $E_n\to S_n \ltimes H_n$.
\end{proof}

It is also worth mentioning that $S_n\ltimes H_n$ is isomorphic to the automorphism group $\Aut(H_n)$ \cite{burillo16}.

We can now show that the Houghton groups themselves admit shift-similar representations:

\begin{corollary}\label{cor:houghton_shift_sim}
The Houghton group $H_n$ is shift-similar.
\end{corollary}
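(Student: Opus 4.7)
The plan is to use Lemma~\ref{lem:even_per_houghton} to realize $H_n$ as a subgroup of the shift-similar group $E_n\le\Sym(\N)$, and then verify directly that this particular subgroup is closed under every shifting map.

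First I would identify explicitly the image of $H_n$ inside $E_n$ under the isomorphism $E_n\cong S_n\ltimes H_n$ from Lemma~\ref{lem:even_per_houghton}. In the proof of that lemma the isomorphism is conjugation by $\xi\colon [n]\times\N\to\N$, $(k,i)\mapsto k+(i-1)n$, and the $S_n$ factor records the eventual permutation of residue classes modulo $n$. Consequently, the copy of $H_n$ sitting inside $E_n\le\Sym(\N)$ consists precisely of those $g\in E_n$ such that $g(i)\equiv i\pmod n$ for all sufficiently large $i$; equivalently, for each $1\le k\le n$ there is an integer $m_k$ with $g(k+(i-1)n)=k+(i-1+m_k)n$ for all sufficiently large $i$. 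Call this subgroup $H\le E_n$; the goal is to show $H$ is shift-similar.

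Second I would check closure under each shifting map. Fix $g\in H$ and $j\in\N$. Since $E_n$ is shift-similar (Example~\ref{ex:eventually_periodic}), we already have $\shift_j(g)\in E_n$, so the only remaining condition is the residue identity $\shift_j(g)(i)\equiv i\pmod n$ for $i$ large. Because $g$ is a bijection of $\N$, the set $\{g(i+1):i\ge j\}$ is cofinite, so $g(i+1)>g(j)$ for all but finitely many $i\ge j$. The casewise formula for $\shift_j$ then gives $\shift_j(g)(i)=g(i+1)-1$ once $i$ is sufficiently large. Combining this with $g(i+1)\equiv i+1\pmod n$ yields $\shift_j(g)(i)\equiv i\pmod n$, so $\shift_j(g)\in H$.

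There is no serious obstacle. The only point that takes a moment of care is unwinding Lemma~\ref{lem:even_per_houghton} to pin down the correct subgroup of $E_n$ to identify with $H_n$; after that, shift-similarity reduces to a single residue-class computation against the fourth branch of the casewise formula for $\shift_j$, combined with the already-established shift-similarity of $E_n$.
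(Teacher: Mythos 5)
Your proposal is correct and follows essentially the same route as the paper: both identify $H_n$, via the conjugation isomorphism from Lemma~\ref{lem:even_per_houghton}, with the subgroup of $\Sym(\N)$ consisting of eventually $n$-periodic permutations that eventually preserve residue classes modulo $n$, and then observe that these properties are preserved by the shifting maps. The paper simply asserts this preservation is clear, whereas you verify the residue condition explicitly from the casewise formula; that computation is accurate.
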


\begin{proof}
Under the isomorphism $\omega\colon \Sym(\N)\to \Sym([n]\times\N)$ from the proof of Lemma~\ref{lem:even_per_houghton} (or really its inverse), we see that $H_n$ is isomorphic to the subgroup of $\Sym(\N)$ consisting of all permutations that are eventually $n$-periodic and that eventually preserve residue classes modulo $n$, that is, all $g$ such that for sufficiently large $i$ we have $g(i+n)=g(i)+n$ and $g(i)\equiv_n i$. These properties are clearly preserved under shifting maps.
\end{proof}

\begin{example}[Shift-similar closure]\label{ex:closure}
One last immediate example is, given any $G\le \Sym(\N)$ let $\overline{G}\le \Sym(\N)$ be the smallest group containing $G$ and satisfying $\shift_j(g)\in \overline{G}$ for all $j\in\N$ and $g\in \overline{G}$. Let us call this the \emph{shift-similar closure} of $G$. In practice it seems quite difficult to analyze shift-similar closures, and for now we do not have much to say about them, but they seem like they could be potentially useful for constructing pathological examples.
\end{example}

\subsection{Group of germs at infinity}\label{ssec:germs}

The following is an important general result about shift-similar groups, which will lead us to the notion of the group of germs at infinity of an infinite shift-similar group.

\begin{theorem}\label{thrm:contain_symfin}
If $G\le\Sym(\N)$ is any infinite shift-similar group, then $\Symfin(\N)\le G$.
\end{theorem}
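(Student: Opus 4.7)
The plan is to prove $\Symfin(\N)\le G$ in two stages. Stage~1 is the easy direction: if $G\cap\Symfin(\N)$ is infinite, then $G\cap\Symfin(\N)=\Symfin(\N)$. Stage~2 is the hard direction: showing that $G\cap\Symfin(\N)$ must be infinite whenever $G$ itself is.

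For Stage~1, note that $\Symfin(\N)=\bigcup_m S_m$, so $G\cap\Symfin(\N)=\bigcup_m(G\cap S_m)$. Each $G\cap S_m$ is a finite, shift-similar subgroup of $S_m$: shift-similarity is inherited from $G$, and a short case check using the definition of $\shift_j$ shows $\shift_j(g)$ still has support contained in $\{1,\ldots,m\}$ when $g\in S_m$ (either $j>m$, in which case $\shift_j(g)=g$, or $j\le m$, in which case support shrinks into $\{1,\ldots,m-1\}$). By Lemma~\ref{lem:finite_shift_sim}, $G\cap S_m=S_{m'(m)}$ for some $m'(m)\le m$, and $m'(m)$ is nondecreasing in $m$. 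If $G\cap\Symfin(\N)$ is infinite, then $m'(m)\to\infty$, forcing $G\cap\Symfin(\N)=\bigcup_m S_{m'(m)}=\Symfin(\N)$.

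For Stage~2, suppose for contradiction that $G\cap\Symfin(\N)$ is finite. By the Stage~1 analysis it equals $S_m$ for some $m\ge 0$, and every element of $G\setminus S_m$ has infinite support. The orbit $G.1$ cannot be bounded, since otherwise the argument in the proof of Lemma~\ref{lem:finite_shift_sim} would force $G\le S_k$ for some $k$, contradicting $|G|=\infty$. Hence for arbitrarily large $k$ there is $g_k\in G$ with $g_k(1)=k$; for $k>m$ such $g_k$ must have infinite support, since otherwise $g_k\in G\cap\Symfin(\N)=S_m$, but $g_k(1)=k>m$ is incompatible with $g_k\in S_m$. The strategy is then to exploit the infinitely many $g_k$ together with their shifts: using Observation~\ref{obs:almost_hom} and Corollary~\ref{cor:inverse} one can track precisely how shifting interacts with products and inverses, and one seeks a product or commutator of the $g_k$ and their shifts whose infinite ``tails'' cancel, producing a nontrivial element of $G\cap\Symfin(\N)$ outside $S_m$. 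This contradicts $G\cap\Symfin(\N)=S_m$.

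The main obstacle is exactly this cancellation step in Stage~2: individual shifting maps preserve the dichotomy between finite and infinite support, so the argument must genuinely intertwine shift-similarity with the group operation in order to cancel infinite tails. A pigeonhole-style argument comparing the eventual behavior of the infinitely many $g_k$ (for instance inspecting the images under $g_k$ of a long initial segment $\{1,\ldots,N\}$, modulo shifts and the action of $S_m$) should produce two elements whose ratio lies in $\Symfin(\N)\setminus S_m$, yielding the required contradiction. This is where I expect the real technical work of the proof to live.
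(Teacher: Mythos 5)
Stage~1 of your argument is fine, and the reduction it provides (it suffices to produce infinitely many finitely supported elements of $G$) is a legitimate reorganization. But Stage~2 is where the entire content of the theorem lives, and you have not actually proved it: you state that one ``seeks a product or commutator \ldots whose infinite tails cancel'' and that a pigeonhole argument ``should'' produce two elements whose ratio is finitely supported. As written, that pigeonhole idea does not work: two permutations $g_k,g_{k'}$ that agree on a long initial segment $\{1,\dots,N\}$ (even modulo $S_m$ and shifts) need not agree on a \emph{cofinite} set, so $g_k\circ g_{k'}^{-1}$ need not lie in $\Symfin(\N)$. No amount of comparing finitely many values forces the infinite tails to match.

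The missing device is to compare two shifts of the \emph{same} element rather than two different elements. For any $g\in\Sym(\N)$ and any $j<k$, both $\shift_j(g)$ and $\shift_k(g)$ send every sufficiently large $i$ to $g(i+1)-1$ (this is immediate from the casewise formula for $\shift_j$), so $h\coloneqq\shift_j(g)\circ\shift_k(g)^{-1}$ lies in $G\cap\Symfin(\N)$ \emph{automatically}; the only work is to choose $g$, $j$, $k$ making $h$ nontrivial and supported in the right place. This is exactly what the paper does: it inducts on $m$ with $S_{m-1}\le G$ as the inductive hypothesis, uses infiniteness of $G$ (plus a pigeonhole on $\{1,\dots,m-1\}$ versus its complement) to find $g\in G$ and $m-1\le i<j$ with $m-1\le g(j)<g(i)$, takes $k$ huge so that $\shift_j(g)$ and $\shift_k(g)$ disagree at $i$ but agree far out and both move nothing below $m-1$, and then applies further shifting maps to the resulting nontrivial $h\in G\cap\Symfin(\N)$ to land exactly on the transposition $(m-1~m)$. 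You correctly sensed that ``the real technical work lives'' in this cancellation step, but identifying where the work is does not substitute for doing it; as it stands the proposal has a genuine gap there, and the specific route you sketch for closing it would fail.
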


\begin{proof}
We need to show that $G$ contains $S_m=\Sym(\{1,\dots,m\})$ for every $m\in\N$. We induct on $m$. The base case $m=1$ holds trivially, so let $m>1$ and assume $G$ contains $S_{m-1}$. It now suffices to show that $G$ contains the transposition $(m-1~m)$.

We first claim that there exist $i,j\in\N$ and $g\in G$ such that $m-1\le i<j$ and $m-1\le g(j)<g(i)$. First suppose every element of $G$ stabilizes $\{1,\dots,m-1\}$, and hence stabilizes $\{m,m+1,\dots\}$. Since $G$ is infinite, some $g\in G$ must act non-trivially on $\{m,m+1,\dots\}$, and thus we can choose our $i$ and $j$ from $\{m,m+1,\dots\}$. Now suppose some element of $G$ does not stabilize $\{1,\dots,m-1\}$. By the pigeonhole principle, some element $g$ of $G$ must simultaneously send some $j\in \{m,m+1,\dots\}$ into $\{1,\dots,m-1\}$ and also send some $i\in \{1,\dots,m-1\}$ into $\{m,m+1,\dots\}$. Since $G$ contains $S_{m-1}$, without loss of generality $i=m-1$ and $g(j)=m-1$. Now these $i$, $j$, and $g$ satisfy all the desired properties.

At this point we have $i,j\in\N$ and $g\in G$ such that $m-1\le i<j$ and $m-1\le g(j)<g(i)$, and we are trying to prove that $(m-1~m)\in G$. Choose $k\in\N$ such that each of $k$ and $g(k)$ is larger than all of $i$, $j$, $g(i)$, and $g(j)$. Then $\shift_j(g)$ sends $i$ to $g(i)-1$ and $\shift_k(g)$ sends $i$ to $g(i)$, and also note that $\shift_j(g)$ and $\shift_k(g)$ agree on every input larger than $\max\{k,g(k)\}$. In particular $h\coloneqq \shift_j(g)\circ \shift_k(g)^{-1} \in G$ is a non-trivial element of $\Symfin(\N)$. Moreover, since $j$, $g(j)$, $k$, and $g(k)$ are all at least $m-1$, we have that $h$ fixes all of $\{1,\dots,m-2\}$. Now hitting $h$ with a finite sequence of appropriate shifting maps, we can produce $(m-1~m)$, and so we conclude $(m-1~m)\in G$ as desired.
\end{proof}

This result imposes some restrictions on shift-similar groups, which we now point out. Recall that a group \emph{virtually} has a property if it has a finite index subgroup with the property.

\begin{corollary}\label{cor:non_properties}
Infinite shift-similar groups cannot have any of the following properties, even virtually: torsion-free, solvable, hyperbolic, residually finite, or being a $p$-group.
\end{corollary}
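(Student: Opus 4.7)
The plan is to lean on Theorem~\ref{thrm:contain_symfin}, which gives $\Symfin(\N)\le G$. Let $A_\infty\le\Symfin(\N)$ denote the index-$2$ alternating subgroup, which is a well-known infinite simple group. The key preliminary observation I would establish first is that every finite index subgroup $H$ of $G$ contains $A_\infty$: indeed $H\cap\Symfin(\N)$ has finite index in $\Symfin(\N)$, whence $H\cap A_\infty$ has finite index in the simple group $A_\infty$, forcing $H\cap A_\infty=A_\infty$. Thus to rule out $G$ being virtually~P, it is enough to exhibit an obstruction inside $A_\infty$.

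For four of the five properties, this is a one-liner using that the property passes to subgroups. Being torsion-free, solvable, a $p$-group, and residually finite are all subgroup-closed. On the other hand $A_\infty$ contains $3$-cycles (so it is not torsion-free), is non-abelian and simple (so, being also infinite, it is neither solvable nor residually finite), and for every prime $p$ contains an element of prime order different from $p$ — a $3$-cycle when $p\ne 3$, a double transposition when $p=3$ — so it is not a $p$-group. Consequently no finite index $H\le G$ can have any of these four properties, and so $G$ cannot be virtually torsion-free, solvable, residually finite, or a $p$-group.

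Hyperbolicity is not subgroup-closed, so the plan here is to invoke the standard consequence of Gromov's theory that every hyperbolic group has a uniform bound on the orders of its finite subgroups (equivalently, only finitely many conjugacy classes of finite subgroups). Since a finite index $H\le G$ contains $A_\infty$ and therefore $A_n$ for every $n$, it contains finite subgroups of arbitrarily large order, ruling out hyperbolicity of $H$ and hence virtual hyperbolicity of $G$.

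No single step looks especially difficult; the main task is to isolate the observation that $A_\infty$ sits inside every finite index subgroup of $G$, and then to remember to cite Gromov's bound on finite subgroup orders for the hyperbolic case. The one mild subtlety worth flagging is that residual finiteness must be argued via subgroup-heredity (using simplicity of $A_\infty$), rather than via any non-trivial passage between $G$ and its finite index subgroups.
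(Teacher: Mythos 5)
Your proof is correct and follows essentially the same route as the paper's: invoke Theorem~\ref{thrm:contain_symfin} to get $\Symfin(\N)\le G$, use subgroup-heredity for the first four properties, and use the standard restriction on torsion/finite subgroups of hyperbolic groups for the last. The only real difference is that you make the ``even virtually'' clause explicit by showing every finite-index subgroup contains the infinite alternating group $A_\infty$ (via its simplicity), a detail the paper leaves implicit, and for hyperbolicity you cite the bound on orders of finite subgroups where the paper cites the equivalent fact that torsion subgroups of hyperbolic groups are finite.
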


\begin{proof}
All these properties other than hyperbolicity are inherited by subgroups, and $\Symfin(\N)$ does not have any of these properties, hence by Theorem~\ref{thrm:contain_symfin} neither does any infinite shift-similar group. For (virtual) hyperbolicity, any torsion subgroup of a hyperbolic group is finite (see, e.g., \cite[Theorem~1(c)]{juriaans05}), so no virtually hyperbolic group can contain $\Symfin(\N)$.
\end{proof}

Now we can define the group of germs at infinity. We should reiterate that $\Symfin(\N)$ is normal in $\Sym(\N)$, since conjugating a permutation translates its support and hence preserves finiteness of support, and so $\Symfin(\N)$ is normal in every infinite shift-similar group.

\begin{definition}[Group of germs at infinity]\label{def:germs}
For $G\le\Sym(\N)$ an infinite shift-similar group, define the \emph{group of germs at infinity} for $G$ to be the quotient group
\[
\Germs(G)\coloneqq G/\Symfin(\N) \text{.}
\]
\end{definition}

One should think of an element of $\Germs(G)$ as encoding ``eventual behavior'' of an element of $G$. In particular two elements of $G$ represent the same germ at infinity if and only if they agree on the complement of some finite subset.

\begin{lemma}
Let $G\le \Sym(\N)$ be an infinite shift-similar group. For any $j\in\N$, the function $\shift_j\colon G\to G$ induces an injective homomorphism
\[
\shift_\infty \colon \Germs(G) \to \Germs(G) \text{,}
\]
which is independent of $j$, defined via
\[
\shift_\infty(g\Symfin(\N)) \coloneqq \shift_j(g)\Symfin(\N) \text{.}
\]
\end{lemma}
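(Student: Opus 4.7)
The plan is to extract one key computational fact about the shifting maps and deduce every clause of the statement from it, together with Observation~\ref{obs:almost_hom} and the normality of $\Symfin(\N)$ in $\Sym(\N)$. The key fact is that, from the casewise formula for $\shift_j$, for any $g \in \Sym(\N)$ and all sufficiently large $i$ (concretely, all $i \geq j$ with $g(i+1) > g(j)$, which excludes only finitely many $i$ since $g$ is a bijection), one has $\shift_j(g)(i) = g(i+1) - 1$. The crucial point is that the right-hand side of this equality has no $j$ in it.

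From this I would extract two consequences. First, $\shift_j$ sends $\Symfin(\N)$ into itself: if $g$ fixes all sufficiently large integers, then $g(i+1) = i+1$ for large $i$, so $\shift_j(g)(i) = i$ for large $i$ and $\shift_j(g) \in \Symfin(\N)$. Second, for any $j, j' \in \N$ and any $g \in \Sym(\N)$, the permutations $\shift_j(g)$ and $\shift_{j'}(g)$ agree on a cofinite subset of $\N$, so they differ by an element of $\Symfin(\N)$; in particular $\shift_j(g) \Symfin(\N) = \shift_{j'}(g) \Symfin(\N)$, which will give the independence-of-$j$ clause once the map is shown to be well-defined.

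With these two facts in hand, well-definedness of $\shift_\infty$ on cosets reduces to the following: if $g' = g\sigma$ with $\sigma \in \Symfin(\N)$, then Observation~\ref{obs:almost_hom} yields $\shift_j(g\sigma) = \shift_{\sigma(j)}(g) \circ \shift_j(\sigma)$, where the first factor equals $\shift_j(g)$ modulo $\Symfin(\N)$ by the second consequence, and the second factor lies in $\Symfin(\N)$ by the first. Using the normality of $\Symfin(\N)$ in $\Sym(\N)$ to conjugate the finite-support error past the other factor, we conclude $\shift_j(g') \in \shift_j(g)\Symfin(\N)$. The homomorphism property is handled in the same manner: write $\shift_j(gh) = \shift_{h(j)}(g) \circ \shift_j(h)$ via Observation~\ref{obs:almost_hom}, replace $\shift_{h(j)}(g)$ by $\shift_j(g)$ up to an element of $\Symfin(\N)$, and again push that error across $\shift_j(h)$ using normality.

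Finally, injectivity falls out of the key fact directly: if $\shift_j(g) \in \Symfin(\N)$, then for large $i$ we have $i = \shift_j(g)(i) = g(i+1) - 1$, so $g(i+1) = i+1$ for all large $i$ and hence $g \in \Symfin(\N)$. The only real (if minor) obstacle is the bookkeeping needed to propagate finite-support corrections through the twisting in Observation~\ref{obs:almost_hom}, which is precisely what the normality of $\Symfin(\N)$ in $\Sym(\N)$ is used for throughout the argument.
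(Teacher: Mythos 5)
Your proposal is correct and follows essentially the same route as the paper's proof: cofinite agreement of $\shift_j(g)$ and $\shift_{j'}(g\circ f)$ (which you derive explicitly from the casewise formula) gives well-definedness and independence of $j$, Observation~\ref{obs:almost_hom} gives the homomorphism property, and the identity $\shift_j(g)(i)=g(i+1)-1$ for large $i$ gives injectivity. Your treatment is just slightly more explicit about the finite-support bookkeeping via normality of $\Symfin(\N)$, which the paper leaves implicit.
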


\begin{proof}
First note that for any $i,j\in\N$ and any $g\in G$ and $f\in \Symfin(\N)$, we have that $\shift_i(g)$ and $\shift_j(g\circ f)$ agree on all sufficiently large inputs. This shows that $\shift_\infty$ is well defined and independent of $j$. Now we claim that it is a homomorphism. Indeed, for all $g,h\in G$, using Observation~\ref{obs:almost_hom} we have
\begin{align*}
\shift_\infty((g\circ h)\Symfin(\N)) &= \shift_1(g\circ h)\Symfin(\N)\\
&= (\shift_{h(1)}(g)\circ \shift_1(h))\Symfin(\N)\\
&= (\shift_{h(1)}(g)\Symfin(\N))(\shift_1(h)\Symfin(\N))\\
&= \shift_\infty(g\Symfin(\N))\shift_\infty(h\Symfin(\N)) \text{.}
\end{align*}
Finally, we need to show $\shift_\infty$ is injective. Suppose $g\in G$ with $\shift_1(g)\in \Symfin(\N)$, so $\shift_1(g)$ fixes all sufficiently large $i\in\N$. For sufficiently large $i$ we have $\shift_1(g)(i)+1=g(i+1)$, so in fact $g$ must also fix all sufficiently large inputs, i.e., $g\in\Symfin(\N)$.
\end{proof}

\begin{definition}[Germ shifting map]
Call the monomorphism $\shift_\infty \colon \Germs(G) \to \Germs(G)$ the \emph{germ shifting map} for an infinite shift-similar group $G$.
\end{definition}

While the germ shifting map is always injective, it is not clear to us whether it is always surjective. We see no reason why it should always be the case, but we do not know of an example where it is not. Let us encode this into the following definition and question:

\begin{definition}[Strongly shift-similar]\label{def:strong}
Call an infinite shift-similar group $G\le\Sym(\N)$ \emph{strongly shift-similar} if $\shift_\infty$ is surjective.
\end{definition}

\begin{question}\label{quest:not_strong}
Does there exist an infinite shift-similar group that is not strongly shift-similar?
\end{question}

Informally, shift-similarity asks that when we ``delete an arrow'' from an element of $G$ (in the sense of Figure~\ref{fig:shift}) then we get an element of $G$. Strong shift-similarity asks that, additionally, when we ``add a new arrow'' to an element of $G$ then, up to elements of finite support, we get an element of $G$.

This definition involving surjectivity of $\shift_\infty$ and the groups of germs at infinity will be useful in what follows, since it is often convenient to be able to work modulo $\Symfin(\N)$, but it is worth pointing out that it is equivalent to something easier to think about, namely surjectivity of the $\shift_j$:

\begin{lemma}\label{lem:strong_shift1_surj}
If $G\le\Sym(\N)$ is a strongly shift-similar group, then the function $\shift_j\colon G\to G$ is surjective for all $j\in\N$. In particular, for any $j\in\N$, the restriction
\[
\shift_j|_{\Stab_G(j)}\colon \Stab_G(j)\to G
\]
is an isomorphism.
\end{lemma}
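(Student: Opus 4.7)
The plan is to first verify that $\shift_j|_{\Stab_G(j)}$ is a well-defined injective homomorphism, then prove surjectivity by constructing, from any $h\in G$, an element $g\in\Stab_G(j)$ with $\shift_j(g)=h$; surjectivity of $\shift_j\colon G\to G$ is an immediate consequence.

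First I would check that on $\Stab_G(j)$ the shifting map is a homomorphism: if $g,h\in\Stab_G(j)$ then $h(j)=j$, so Observation~\ref{obs:almost_hom} yields $\shift_j(g\circ h)=\shift_{h(j)}(g)\circ\shift_j(h)=\shift_j(g)\circ\shift_j(h)$. Injectivity is direct from the formula $\shift_j(g)=s_{g(j)}^{-1}\circ g|_{\N\setminus\{j\}}\circ s_j$: if $g\in\Stab_G(j)$ and $\shift_j(g)=\id$, then $g(j)=j$ and $g|_{\N\setminus\{j\}}=\id$, so $g=\id$. Shift-similarity of $G$ ensures the image lies in $G$.

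For surjectivity, fix $h\in G$. Strong shift-similarity gives $\overline{g}_0\in\Germs(G)$ with $\shift_\infty(\overline{g}_0)=h\,\Symfin(\N)$; lift to $g_0\in G$, so $\shift_j(g_0)=h\circ f'$ for some $f'\in\Symfin(\N)$. The idea is to modify $g_0$ in two steps, both of which stay inside $G$ because Theorem~\ref{thrm:contain_symfin} supplies $\Symfin(\N)\le G$. If $k:=g_0(j)\ne j$, replace $g_0$ by $(j\ k)\circ g_0\in G$, which now fixes $j$; by Observation~\ref{obs:almost_hom} the effect on $\shift_j$ is to pre-compose by $\shift_k\bigl((j\ k)\bigr)\in\Symfin(\N)$, which can be absorbed into a new $f'\in\Symfin(\N)$. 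So I may assume $g_0(j)=j$ and $\shift_j(g_0)=h\circ f'$. Next, define $f\in\Symfin(\N)$ by $f(j)=j$ and $f|_{\N\setminus\{j\}}=s_j\circ(f')^{-1}\circ s_j^{-1}$, so that $\shift_j(f)=(f')^{-1}$; then $g:=g_0\circ f\in G$, $g(j)=j$, and Observation~\ref{obs:almost_hom} gives $\shift_j(g)=\shift_j(g_0)\circ\shift_j(f)=h$.

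The main obstacle is that strong shift-similarity only guarantees surjectivity of $\shift_\infty$ modulo $\Symfin(\N)$, whereas the conclusion demands an exact preimage that also sits inside the \emph{stabilizer} $\Stab_G(j)$. Both difficulties are surmounted by exploiting Theorem~\ref{thrm:contain_symfin}: it guarantees that the transposition used to move $g_0(j)$ back to $j$ and the finite-support corrector $f$ are themselves in $G$, so the two adjustments never leave the group. Everything else is a bookkeeping exercise with the cocycle identity of Observation~\ref{obs:almost_hom}.
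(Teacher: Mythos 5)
Your proof is correct and follows essentially the same route as the paper: use surjectivity of $\shift_\infty$ to obtain an approximate preimage, then invoke $\Symfin(\N)\le G$ (Theorem~\ref{thrm:contain_symfin}) and the cocycle identity of Observation~\ref{obs:almost_hom} to correct it into an exact preimage lying in $\Stab_G(j)$. The only difference is bookkeeping: the paper performs the correction in one step by choosing a lift $f'$ of the finite-support error $f$ with $f'(g'(j))=j$ (``adding an arrow''), whereas you split it into a transposition fixing $j$ followed by a separate finite-support corrector, which amounts to the same construction.
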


\begin{proof}
We are assuming that $\shift_\infty\colon \Germs(G)\to\Germs(G)$ is surjective. Let $g\in G$. By surjectivity of $\shift_\infty$, we can choose $g'\in G$ and $f\in \Symfin(\N)$ such that $g=f\circ \shift_j(g')$. Let $f'\in \Symfin(\N)$ be such that $f=\shift_{g'(j)}(f')$ and $f'(g'(j))=j$; informally, we add a new arrow to $f$, from a new domain point at $g'(j)$ to a new range point at $j$, and it is clear $f'$ still has finite support. Now by Observation~\ref{obs:almost_hom} we have
\[
\shift_j(f'\circ g')=\shift_{g'(j)}(f')\circ \shift_j(g') = f\circ \shift_j(g') = g \text{,}
\]
and so $\shift_j$ is surjective. In fact, since $f'\circ g'$ fixes $j$, this shows that the restriction of $\shift_j$ to $\Stab_G(j)$ already surjects onto $G$. Finally, to see that this restriction is an isomorphism, note that it is a homomorphism by Observation~\ref{obs:almost_hom}, and if $\shift_j(g)=1$ for $g\in\Stab_G(j)$ then $g=1$.
\end{proof}

Note that this shows $\Stab_G(j)\cong G$ for any $j\in\N$, which is interesting in its own right.

The concrete examples of infinite shift-similar groups we have seen so far are $\Symfin(\N)$, $E_n$, and $H_n$, and it is easy to see these are all strongly shift-similar. For example, $\Germs(H_n)\cong \Z^{n-1}$, which one should think of as the copy of $\Z^{n-1}$ sitting inside $\Z^n$ as the subgroup of all $(m_1,\dots,m_n)$ with $m_1+\cdots+m_n=0$, and one can work out that $\psi_\infty$ is described by the automorphism cyclically permuting the coordinates, since for example shifting $\N$ by $1$ takes the $k$th residue class modulo $n$ to the $(k+1)$st residue class modulo $n$. Things are less clear for the last example we have seen so far, namely the shift-similar closure $\overline{G}$ of a subgroup $G\le \Symfin(\N)$ (see Example~\ref{ex:closure}). We do not see a reason why these should always be strongly shift-similar, especially, for example, if $G$ is cyclic, but it seems difficult in general to analyze groups of the form $\overline{G}$, and we do not know of an example where $\overline{G}$ is not strongly shift-similar.

\subsection{Sufficient conditions for shift-similarity}\label{ssec:suff_cond}

Before continuing to populate the list of examples of shift-similar groups, let us discuss some tools for proving shift-similarity. In practice, when proving that a group is shift-similar it can be convenient to only have to check $\shift_j(g)\in G$ for certain $j$ and/or $g$. In this subsection we establish some sufficient conditions for shift-similarity in this vein.

First we have a sufficient condition that is useful if we have some nice generating set.

\begin{lemma}\label{lem:gens_suffice}
Let $G\le \Sym(\N)$ and let $A\subseteq G$ be a generating set. If $\shift_j(a)\in G$ for all $j\in\N$ and all $a\in A$, then $G$ is shift-similar.
\end{lemma}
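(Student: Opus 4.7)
The plan is a straightforward induction on word length, fueled by the two identities already on hand: Observation~\ref{obs:almost_hom} (``$\shift_j$ is almost a homomorphism'') and Corollary~\ref{cor:inverse} (compatibility of $\shift_j$ with inverses). The hypothesis gives $\shift_j(a)\in G$ for every generator $a\in A$ and every $j\in\N$, and I need to promote this to all of $G$.

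First I would handle inverses of generators. For $a\in A$ and $j\in\N$, Corollary~\ref{cor:inverse} gives
\[
\shift_j(a^{-1}) = \shift_{a(a^{-1}(j))}(a^{-1}) \cdot \id = \bigl(\shift_{a^{-1}(j)}(a)\bigr)^{-1},
\]
after reindexing, so $\shift_j(a^{-1})$ is the inverse of an element of $G$ and hence lies in $G$. Thus $\shift_j(a^{\pm 1})\in G$ for every $a\in A$, every sign, and every $j$.

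Now let $g\in G$, and write $g=b_1b_2\cdots b_n$ with each $b_i\in A\cup A^{-1}$. I would induct on $n$. For $n=0$ we have $g=\id$ and $\shift_j(\id)=\id\in G$ for all $j$. For the inductive step, set $h\coloneqq b_1\cdots b_{n-1}$ and apply Observation~\ref{obs:almost_hom}:
\[
\shift_j(g) \;=\; \shift_j(h\circ b_n) \;=\; \shift_{b_n(j)}(h)\circ \shift_j(b_n).
\]
The factor $\shift_j(b_n)$ lies in $G$ by the previous paragraph, and the factor $\shift_{b_n(j)}(h)$ lies in $G$ by the inductive hypothesis applied to the shorter word $h$ at the index $b_n(j)\in\N$. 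Since $G$ is a group, the product is in $G$, completing the induction.

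There is no real obstacle here; the only point to be careful about is that when $\shift_j$ is ``distributed'' across a product, the index shifts (from $j$ to $b_n(j)$), so the induction hypothesis must be stated uniformly over all $j\in\N$, not for a fixed $j$. This uniformity is exactly what the statement supplies, both for generators and, via the induction, for arbitrary group elements.
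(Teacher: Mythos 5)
Your proof is correct and follows the same route as the paper: the paper likewise uses Corollary~\ref{cor:inverse} to reduce to a symmetric generating set and then Observation~\ref{obs:almost_hom} to distribute $\shift_j$ across a word in the generators; you have simply written out the induction that the paper leaves implicit. The one detail worth having made explicit --- that the index changes from $j$ to $b_n(j)$, so the hypothesis must hold for all $j$ --- is exactly the right point to flag, and your handling of it is correct.
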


\begin{proof}
By Corollary~\ref{cor:inverse} we can assume without loss of generality that $a^{-1}\in A$ for all $a\in A$. Now the result follows from Observation~\ref{obs:almost_hom} since any element of $G$ is a product of elements of $A$.
\end{proof}

Next we have a sufficient condition that is useful if $G$ is already known to be transitive on $\N$:

\begin{lemma}\label{lem:trans}
Let $G\le \Sym(\N)$ be transitive on $\N$. If there exists $j_0\in\N$ such that $\shift_{j_0}(g)\in G$ for all $g\in G$, then $G$ is shift-similar.
\end{lemma}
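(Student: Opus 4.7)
The plan is to use Observation~\ref{obs:almost_hom} to bootstrap shift-similarity at a single index $j_0$ up to shift-similarity at all indices, using transitivity to move $j_0$ to an arbitrary $j$. Concretely, given any $j \in \N$ and any $g \in G$, I want to write $\shift_j(g)$ as a product of elements of the form $\shift_{j_0}(\,\cdot\,)$ applied to elements of $G$, since those are in $G$ by hypothesis.

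First, pick $h \in G$ with $h(j_0) = j$; this is where transitivity of $G$ on $\N$ is used. Then Observation~\ref{obs:almost_hom}, applied to the composition $g \circ h$ at the index $j_0$, gives
\[
\shift_{j_0}(g \circ h) = \shift_{h(j_0)}(g) \circ \shift_{j_0}(h) = \shift_j(g) \circ \shift_{j_0}(h).
\]
Solving for $\shift_j(g)$ yields
\[
\shift_j(g) = \shift_{j_0}(g \circ h) \circ \shift_{j_0}(h)^{-1}.
\]
Since $g \circ h \in G$ and $h \in G$, the hypothesis tells us that both $\shift_{j_0}(g \circ h)$ and $\shift_{j_0}(h)$ lie in $G$, and hence so does the displayed product. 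Thus $\shift_j(g) \in G$ for every $j$ and every $g$, which is exactly shift-similarity of $G$.

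There is essentially no obstacle here; the only subtlety is to make sure the indices line up in Observation~\ref{obs:almost_hom}, which is precisely why we chose $h$ so that $h(j_0) = j$. No appeal to Corollary~\ref{cor:inverse} is needed, since the inverse appearing is the inverse in $\Sym(\N)$ of an element already known to be in $G$, not the shifting map applied to an inverse.
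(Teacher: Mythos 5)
Your proof is correct, and it is in fact a streamlined version of the argument the paper gives. The paper uses transitivity twice: it picks $h_1,h_2\in G$ with $h_1(g(j))=j_0$ and $h_2(j_0)=j$, expands $\shift_{j_0}(h_1\circ g\circ h_2)$ via Observation~\ref{obs:almost_hom} into $\shift_{g(j)}(h_1)\circ\shift_j(g)\circ\shift_{j_0}(h_2)$, and then must dispose of the leftover factor $\shift_{g(j)}(h_1)$ --- whose shifting index is not $j_0$ --- by invoking Corollary~\ref{cor:inverse} to rewrite its inverse as $\shift_{j_0}(h_1^{-1})\in G$. You avoid that step entirely by precomposing with a single $h$ satisfying $h(j_0)=j$: the cocycle identity then reads $\shift_{j_0}(g\circ h)=\shift_j(g)\circ\shift_{j_0}(h)$, and both $\shift_{j_0}(g\circ h)$ and $\shift_{j_0}(h)$ lie in $G$ directly by hypothesis, so $\shift_j(g)$ is exhibited as a product of an element of $G$ with the group inverse (in $\Sym(\N)$) of another element of $G$. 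Both arguments rest on the same identity and the same use of transitivity, but yours needs only one auxiliary element and no appeal to Corollary~\ref{cor:inverse}, which is a genuine simplification with no loss of rigor.
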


\begin{proof}
Let $j\in\N$ and $g\in G$. By transitivity we can choose $h_1,h_2\in G$ such that $h_1(g(j))=j_0$ and $h_2(j_0)=j$. We know that $\shift_{j_0}(h_1\circ g\circ h_2)\in G$ by hypothesis. By Observation~\ref{obs:almost_hom}, this equals
\[
\shift_{j_0}(h_1\circ g\circ h_2) = \shift_{g\circ h_2(j_0)}(h_1) \circ \shift_{h_2(j_0)}(g) \circ \shift_{j_0}(h_2)\text{.}
\]
Since $\shift_{j_0}(h_2)\in G$ and $h_2(j_0)=j$, this shows that $\shift_{g(j)}(h_1) \circ \shift_j(g)\in G$. Also, by Corollary~\ref{cor:inverse} we have
\[
\shift_{g(j)}(h_1)^{-1} = \shift_{h_1(g(j))}(h_1^{-1}) = \shift_{j_0}(h_1^{-1}) \in G \text{,}
\]
so we conclude that $\shift_j(g)\in G$ as desired.
\end{proof}

Finally, we have a sufficient condition that is useful if we already know $\Symfin(\N)\le G$.

\begin{lemma}\label{lem:stab_suffices}
Let $\Symfin(\N)\le G\le \Sym(\N)$. If $\shift_1(h)\in G$ for all $h\in \Stab_G(1)$ then $G$ is shift-similar.
\end{lemma}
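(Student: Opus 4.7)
The plan is to reduce the general statement, that $\shift_j(g)\in G$ for every $g\in G$ and every $j\in\N$, to the hypothesized special case $j=1$ and $g\in\Stab_G(1)$, using only elements of $\Symfin(\N)$ (which are available since $\Symfin(\N)\le G$) as ``correction factors'' and the cocycle identity from Observation~\ref{obs:almost_hom}.

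First, fix $g\in G$ and $j\in\N$. Set $\sigma\coloneqq (1~j)$ and $\tau\coloneqq (1~g(j))$, where by convention we take $\sigma=\id$ if $j=1$ and $\tau=\id$ if $g(j)=1$. Both $\sigma,\tau\in\Symfin(\N)\le G$. Define $h\coloneqq \tau\circ g\circ \sigma$. Then $h(1)=\tau(g(j))=1$, so $h\in \Stab_G(1)$, and the hypothesis gives $\shift_1(h)\in G$. Applying Observation~\ref{obs:almost_hom} twice yields
\[
\shift_1(h) \;=\; \shift_{g(\sigma(1))}(\tau)\circ \shift_{\sigma(1)}(g)\circ \shift_1(\sigma) \;=\; \shift_{g(j)}(\tau)\circ \shift_j(g)\circ \shift_1(\sigma)\text{.}
\]

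Next I would check the elementary fact that $\shift_k$ carries $\Symfin(\N)$ into $\Symfin(\N)$ for every $k$: if $f\in\Symfin(\N)$, then $f(i+1)=i+1$ for all sufficiently large $i$, at which point the casewise formula for $\shift_k$ forces $\shift_k(f)(i)=i$. Since $\sigma$ and $\tau$ have finite support, this shows $\shift_1(\sigma),\shift_{g(j)}(\tau)\in \Symfin(\N)\le G$. Solving the displayed equation gives
\[
\shift_j(g) \;=\; \shift_{g(j)}(\tau)^{-1}\circ \shift_1(h)\circ \shift_1(\sigma)^{-1}\text{,}
\]
a product of three elements of $G$, so $\shift_j(g)\in G$ as required.

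The argument is essentially a ``conjugation trick'' that transports an arbitrary shifting map $\shift_j$ back to $\shift_1$ applied to a stabilizer element, at the cost of finitely supported error terms which are automatically in $G$. The only mild subtlety is that $\shift_j$ is not a homomorphism, so the cocycle identity of Observation~\ref{obs:almost_hom} must be applied carefully and in the right order; once this is done, the edge cases $j=1$ or $g(j)=1$ cause no trouble, since then one of $\sigma,\tau$ is the identity and its shift is the identity as well. I anticipate no real obstacle beyond bookkeeping.
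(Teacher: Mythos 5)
Your argument is correct, but it takes a different route from the paper's. The paper first invokes Lemma~\ref{lem:trans} (applicable because $\Symfin(\N)\le G$ acts transitively on $\N$) to reduce to showing $\shift_1(g)\in G$ for all $g\in G$, and then corrects $g$ by a single left multiplication by the cycle $c=(1~2\cdots j)$ with $j=g(1)$: the point there is that $\shift_j(c)=\id$, so Observation~\ref{obs:almost_hom} gives $\shift_1(c\circ g)=\shift_j(c)\circ\shift_1(g)=\shift_1(g)$ and no correction term needs to be absorbed at all. You instead handle arbitrary $j$ directly by a two-sided correction $h=\tau\circ g\circ\sigma$ with transpositions $\sigma=(1~j)$, $\tau=(1~g(j))$, apply the cocycle identity twice, and then absorb the leftover factors $\shift_1(\sigma)$ and $\shift_{g(j)}(\tau)$ using the (correct, and easily verified from the casewise formula) fact that every $\shift_k$ carries $\Symfin(\N)$ into $\Symfin(\N)\le G$. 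Your version is more self-contained in that it does not route through the transitivity criterion of Lemma~\ref{lem:trans}, at the cost of needing that extra closure observation and slightly more bookkeeping; the paper's version is shorter because the cycle is chosen so that its shift vanishes identically. Both proofs ultimately rest on Observation~\ref{obs:almost_hom}, and your handling of the degenerate cases $j=1$ and $g(j)=1$ is fine since $\shift_k(\id)=\id$.
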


\begin{proof}
By Lemma~\ref{lem:trans}, which applies since $\Symfin(\N)$ is transitive on $\N$, it suffices to show that $\shift_1(g)\in G$ for all $g\in G$. Let $j=g(1)$ and consider the cycle $c=(1~2\cdots j)\in \Symfin(\N)$. Note that $c\circ g \in \Stab_G(1)$, so $\shift_1(c\circ g)\in G$ by hypothesis. By Observation~\ref{obs:almost_hom} we have $\shift_1(c\circ g)=\shift_j(c)\circ \shift_1(g)$. But $\shift_j(c)$ is the identity, so $\shift_1(g)\in G$, as desired.
\end{proof}

\subsection{A class of examples}\label{ssec:containing_E2}

In this subsection, we discuss a class of examples of shift-similar groups arising from groups containing $E_2$. Here $E_2$ is the group of eventually $2$-periodic permutations from Example~\ref{ex:eventually_periodic}, which is isomorphic to $S_2\ltimes H_2$. These examples will show that shift-similar groups are quite abundant (see Theorem~\ref{thrm:embed_fin_gen} and Corollary~\ref{cor:uncountable}).

\begin{definition}[Slide and flip]
We will denote by $\slide$ the element of $E_2$ given by
\[
\slide\coloneqq (\cdots 6~4~2~1~3~5~7\cdots) \text{,}
\]
and call $\slide$ the \emph{slide}. Thus $\slide$ sends each odd number to the next odd number, each even number besides 2 to the previous even number, and 2 to 1. The image of $\slide$ under the isomorphism $\omega\colon E_2\to S_2\ltimes H_2$ is the element of $H_2$ sending each $(1,i)$ to $(1,i+1)$, each $(2,i)$ to $(2,i-1)$ for $i\ge 2$, and $(2,1)$ to $(1,1)$.

Also denote by $\flip$ the element of $E_2$ given by
\[
\flip\coloneqq (1~2)(3~4)(5~6)\cdots \text{,}
\]
and call $\flip$ the \emph{flip}. The image of $\flip$ under the isomorphism $\omega\colon E_2\to S_2\ltimes H_2$ is the element $((1~2),\id)$.
\end{definition}

See Figure~\ref{fig:slide_flip} for a helpful visualization of $\slide$ and $\flip$.

\begin{figure}[htb]\centering
 \begin{tikzpicture}[line width=1pt]
    \draw[->] (1+.3,0) -- (3-.3,0);
    \draw[->] (2-.2,1-.2) -- (1+.2,0+.2);
    \draw[->] (3+.3,0) -- (5-.3,0);
    \draw[->] (4-.3,1) -- (2+.3,1);
    \draw[->] (5+.3,0) -- (7-.3,0);
    \draw[->] (6-.3,1) -- (4+.3,1);
    \draw[->] (8-.3,1) -- (6+.3,1);
    
    \node at (1,0) {$1$};
    \node at (3,0) {$3$};
    \node at (5,0) {$5$};
    \node at (2,1) {$2$};
    \node at (4,1) {$4$};
    \node at (6,1) {$6$};
    
    \node at (8+.3,1) {$\cdots$};
    \node at (7+.3,0) {$\cdots$};
    
    \node at (4,-.5) {$\slide$};
    
   \begin{scope}[xshift=9cm]
    \draw[<->] (1+.2,0+.2) -- (2-.2,1-.2);
    \draw[<->] (3+.2,0+.2) -- (4-.2,1-.2);
    \draw[<->] (5+.2,0+.2) -- (6-.2,1-.2);
    
    \node at (1,0) {$1$};
    \node at (3,0) {$3$};
    \node at (5,0) {$5$};
    \node at (2,1) {$2$};
    \node at (4,1) {$4$};
    \node at (6,1) {$6$};
    
    \node at (7,1) {$\cdots$};
    \node at (6,0) {$\cdots$};
    
    \node at (4,-.5) {$\flip$};
   \end{scope}
 \end{tikzpicture}
\caption{A good way to picture $\slide$ and $\flip$. We lay out $\N$ from left to right, but in a zigzag, so that the arrows describing $\slide$ are more visibly encoding a ``slide'' move.}\label{fig:slide_flip}
\end{figure}

Enumerating $\Z$ as $\{0,-1,1,-2,2,-3,3,\dots\}$ and identifying this with $\N$, we can view $\slide$ and $\flip$ as very straightforward elements of $\Sym(\Z)$, namely $n\mapsto n+1$ and $n\mapsto -n-1$ respectively. This is a nice way to picture these elements, but using $\Z$ makes it harder to deal with issues of shift-similarity, so we will stick to working in $\Sym(\N)$.

\begin{observation}\label{obs:E2_fin_gen}
The group $E_2$ is generated by $\slide$, $\flip$, and $(1~2)$. In particular $E_2$ is finitely generated.
\end{observation}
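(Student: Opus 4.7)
The plan is to show $G\coloneqq\langle\slide,\flip,(1~2)\rangle = E_2$ by verifying two things: first, $G$ contains the subgroup $\Symfin(\N)$ of finitely supported permutations; second, the images of $\slide$ and $\flip$ already generate the germ quotient $\Germs(E_2)=E_2/\Symfin(\N)$. Combining these immediately yields $G=E_2$, since every element of $E_2$ is then congruent modulo $\Symfin(\N)\le G$ to a word in $\slide$ and $\flip$.

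For the first step, I would conjugate $(1~2)$ by powers of $\slide$. Using the identity $g(a~b)g^{-1}=(g(a)~g(b))$, together with the formulas $\slide^{k}(1)=2k+1$ and $\slide^{k}(2)=2k-1$ for $k\ge 1$, and $\slide^{-k}(1)=2k$ and $\slide^{-k}(2)=2k+2$ for $k\ge 1$ (all read off from the definition of $\slide$), one obtains every transposition of the form $(2k-1~2k+1)$ and $(2k~2k+2)$. Together with $(1~2)$, these suffice to generate $\Symfin(\N)$: the first family generates the finitary symmetric group on the odd integers, the second does the same on the evens, and $(1~2)$ bridges the two parities.

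For the second step, I would invoke the isomorphism $\omega\colon E_2\to S_2\ltimes H_2$ of Lemma~\ref{lem:even_per_houghton} together with the fact that $H_2/\Symfin([2]\times\N)\cong\Z$ (via the eventual translation amount on the first ray, with the second amount forced to be its negative). This identifies $\Germs(E_2)\cong S_2\ltimes\Z\cong D_\infty$, and a direct inspection shows that $\omega(\slide)$ has translation amount $1$ on the first ray (so projects to a generator of the $\Z$ factor) while $\omega(\flip)$ is the defining $S_2$ generator. Hence for every $g\in E_2$ there is a word $w$ in $\slide,\flip$ with $gw^{-1}\in\Symfin(\N)$, placing $g\in\Symfin(\N)\cdot\langle\slide,\flip\rangle\le G$. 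The only mildly delicate point is the explicit computation of $\slide^{\pm k}(1)$ and $\slide^{\pm k}(2)$ from the zigzag description of $\slide$; once those formulas are in hand, the remainder follows cleanly from the structural results for $E_2$ already established in Section~\ref{sec:shift_similar}.
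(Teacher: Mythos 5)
Your proposal is correct, but it takes a genuinely different (and more self-contained) route than the paper. The paper's proof is a one-liner: push everything through the isomorphism $\omega\colon E_2\to S_2\ltimes H_2$ of Lemma~\ref{lem:even_per_houghton}, observe that $\omega(\flip)$ generates the $S_2$ factor, and cite the well-known fact that $H_2$ is generated by $\omega(\slide)$ and $\omega(1~2)$ (referencing Lee's notes). You instead essentially \emph{prove} that generation fact rather than citing it: your conjugation computations $\slide^{k}(1~2)\slide^{-k}=(2k-1~2k+1)$ and $\slide^{-k}(1~2)\slide^{k}=(2k~2k+2)$ are correct (they follow directly from the zigzag description of $\slide$), these together with the bridging transposition $(1~2)$ do generate $\Symfin(\N)$, and the identification $\Germs(E_2)\cong S_2\ltimes\Z$ with $\slide$ and $\flip$ mapping to generators is consistent with the paper's later discussion of the kernel of $H_n\to\Z^{n-1}$. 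What your approach buys is independence from the external reference and an argument that transparently splits into the ``finite-support part'' and the ``germ part,'' which is very much in the spirit of the rest of Section~\ref{sec:shift_similar}; what the paper's approach buys is brevity. Both are valid.
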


\begin{proof}
Hitting everything with the isomorphism $\omega\colon E_2\to S_2\ltimes H_2$, it is clear that $\omega(\flip)$ generates the $S_2$ factor and it is well known that $\omega(\slide)$ and $\omega(1~2)$ generate the $H_2$ factor, see, e.g., \cite[Figure~2.2]{lee12}.
\end{proof}

\begin{definition}[Half-finite support]
A permutation $g\in \Sym(\N)$ has \emph{half-finite support} if $g(i)=i$ either for all sufficiently large even $i$, or for all sufficiently large odd $i$.
\end{definition}

Note that a permutation satisfies both of these rules if and only if it has finite support, but it is easy to construct permutations with infinite, half-finite support, e.g., $(1~3)(5~7)(9~11)\cdots$.

\begin{lemma}\label{lem:shift_half_finite}
Let $g\in\Sym(\N)$ have half-finite support. Then for any $j\in\N$, we have that $\shift_j(g)$ equals either $g^{\slide\circ\flip} \circ f$ or $g^{\flip}\circ f$ for some $f\in\Symfin(\N)$.
\end{lemma}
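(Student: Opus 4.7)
The plan is to work directly with the casewise formula from Definition~\ref{def:shift_similar}: for any $i\ge j$ with $g(i+1)\ge g(j)$, we have $\shift_j(g)(i) = g(i+1)-1$. Since $g$ is a bijection of $\N$, the condition $g(i+1)\ge g(j)$ holds for all sufficiently large $i$, so this identity is valid cofinitely. Hence to establish the lemma it is enough to exhibit $c\in\{\flip,\,\slide\circ\flip\}$ for which $g^c(i) = g(i+1)-1$ also holds for all sufficiently large $i$; then $(g^c)^{-1}\circ \shift_j(g)$ will fix all but finitely many points, giving the required $f\in\Symfin(\N)$.

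Next I would split into the two subcases of the hypothesis. Case~A: $g(i)=i$ for all sufficiently large even $i$. Case~B: $g(i)=i$ for all sufficiently large odd $i$. A short bijectivity/pigeonhole argument shows that in Case~A the element $g$ must in fact map all but finitely many odd integers to odd integers: indeed $g$ restricts to a bijection of the cofinite set of non-fixed points, and only finitely many odd integers can land in the finite even exceptional set. Case~B is symmetric.

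In Case~A, I would then take $c=\slide\circ\flip$, which is the involution $(2~3)(4~5)(6~7)\cdots$ pairing $2k$ with $2k+1$ for $k\ge 1$. For large odd $i$, $c(i)=i-1$ is large even, so $g(c(i))=i-1$ by Case~A and $c(i-1)=i$; hence $g^c(i)=i$, which matches $\shift_j(g)(i)=g(i+1)-1=(i+1)-1=i$. For large even $i$, $c(i)=i+1$ is large odd, and by the parity observation $g(i+1)$ is itself large odd, so $c(g(i+1))=g(i+1)-1=\shift_j(g)(i)$. Therefore $g^c$ and $\shift_j(g)$ agree on a cofinite set, yielding $\shift_j(g)=g^{\slide\circ\flip}\circ f$ for some $f\in\Symfin(\N)$. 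Case~B is entirely analogous, using $c=\flip=(1~2)(3~4)(5~6)\cdots$ and swapping the roles of ``even'' and ``odd'' throughout.

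The only real obstacle is keeping the parity bookkeeping straight; there is no deeper subtlety. Conceptually, $\flip$ and $\slide\circ\flip$ are exactly the two cofinite pairings of consecutive natural numbers, and $\shift_j$ behaves asymptotically like precomposition with the shift $i\mapsto i+1$, so it is forced to coincide (mod $\Symfin(\N)$) with conjugation by whichever of these two involutions preserves the parity class on which $g$ is eventually the identity.
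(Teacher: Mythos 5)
Your proposal is correct and follows essentially the same route as the paper: reduce to the observation that $\shift_j(g)(i)=g(i+1)-1$ for all sufficiently large $i$, split into the two half-finite-support cases, and check that conjugation by $\slide\circ\flip$ (respectively $\flip$) agrees with $i\mapsto g(i+1)-1$ on large even and large odd inputs separately, using the parity-preservation consequence of the hypothesis. Your identification of $\slide\circ\flip$ as the pairing $(2~3)(4~5)(6~7)\cdots$ is a nice way to organize the same computation the paper does directly.
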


Here the notation $x^y$ for $x$ and $y$ group elements denotes conjugation, $x^y=y^{-1}xy$.

\begin{proof}
The claim is that the element $\shift_j(g)$ either does the same thing as $g^{\slide\circ\flip}$ to all sufficiently large $i\in\N$, or does the same thing as $g^{\flip}$ to all sufficiently large $i$. Note that for all sufficiently large $i$, we know that $\shift_j(g)$ sends $i$ to $g(i+1)-1$.

First suppose $g(i)=i$ for all sufficiently large even $i$. This means that for all sufficiently large odd $i$, $g(i)$ is also odd. We claim that $g^{\slide\circ\flip}=(\slide\circ\flip)^{-1}\circ g\circ (\slide\circ\flip)$ sends sufficiently large $i$ to $g(i+1)-1$, which will finish this case. First consider sufficiently large even $i$. Then we have that $(\slide\circ\flip)^{-1}\circ g\circ (\slide\circ\flip)$ sends $i$ to $g(i+1)-1$, as desired. For sufficiently large odd $i$, this element sends $i$ to $g(i-1)+1$, and since $g$ fixes all sufficiently large even inputs (such as $i+1$ and $i-1$), this is the same as $g(i+1)-1$ as desired.

Now suppose $g(i)=i$ for all sufficiently large odd $i$, so for sufficiently large even $i$, $g(i)$ is also even. We claim that $g^{\flip}=\flip^{-1}\circ g\circ \flip$ sends sufficiently large $i$ to $g(i+1)-1$, which will finish this case. First consider sufficiently large odd $i$. Then we have that $\flip^{-1}\circ g\circ \flip$ sends $i$ to $g(i+1)-1$, as desired. For sufficiently large even $i$, this element sends $i$ to $g(i-1)+1$, and since $g$ fixes all sufficiently large odd inputs (such as $i+1$ and $i-1$), this is the same as $g(i+1)-1$ as desired.
\end{proof}

A composition of permutations with half-finite support need not have half-finite support, so the permutations with half-finite support do not form a group. However, one can consider groups generated by permutations with half-finite support, and these turn out to yield shift-similar groups in the following sense:

\begin{proposition}\label{prop:half_fin_plus_E2}
Let $\Gamma\le\Sym(\N)$ be any group generated by permutations with half-finite support. Let $G=\langle \Gamma,E_2\rangle$. Then $G$ is strongly shift-similar.
\end{proposition}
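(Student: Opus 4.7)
The plan is twofold: first to verify that $G$ is shift-similar, then to verify that the germ shifting map $\shift_\infty \colon \Germs(G) \to \Germs(G)$ is surjective.

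For shift-similarity I would apply Lemma~\ref{lem:gens_suffice} to the generating set of $G$ consisting of the half-finite support generators of $\Gamma$ together with the three generators $\slide$, $\flip$, $(1~2)$ of $E_2$ (Observation~\ref{obs:E2_fin_gen}). The $E_2$ generators cause no trouble: $E_2$ is itself shift-similar (Example~\ref{ex:eventually_periodic}), so the shifts of these three elements already lie in $E_2 \le G$. For a half-finite support generator $g$, Lemma~\ref{lem:shift_half_finite} writes $\shift_j(g) = g^a \circ f$ for some $a \in \{\slide \circ \flip, \flip\} \subseteq E_2$ and some $f \in \Symfin(\N)$. Both factors lie in $G$ (since $a, g \in G$ and $\Symfin(\N) \le E_2 \le G$), so $\shift_j(g) \in G$ as required.

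For strong shift-similarity, the key observation is that $\Germs(E_2)$ sits naturally as a subgroup of $\Germs(G)$, and because $E_2$ is strongly shift-similar the restriction $\shift_\infty|_{\Germs(E_2)}$ is already surjective onto $\Germs(E_2)$. Hence the entire subgroup $\Germs(E_2)$ lies in the image of $\shift_\infty$, and in particular so do the classes of $\slide \circ \flip$ and $\flip$. Now for any half-finite support generator $g$ of $\Gamma$, Lemma~\ref{lem:shift_half_finite} gives $\shift_\infty(g \Symfin(\N)) = g^a \Symfin(\N)$ for a specific $a \in \{\slide \circ \flip, \flip\}$, so $g^a \Symfin(\N)$ lies in the image. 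Since $a \Symfin(\N)$ is also in the image and the image is a subgroup, the identity $g = a \cdot g^a \cdot a^{-1}$ shows $g \Symfin(\N)$ lies in the image too. These classes, together with those of the $E_2$ generators, form a generating set for $\Germs(G)$, and surjectivity follows.

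I do not anticipate any real obstacle here: once Lemma~\ref{lem:shift_half_finite} is in hand, everything hinges on the trick of using strong shift-similarity of $E_2$ to ``invert'' the conjugation by $a$ that Lemma~\ref{lem:shift_half_finite} introduces, with the finite-support error term $f$ absorbed by $\Symfin(\N) \le G$.
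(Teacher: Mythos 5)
Your proof is correct and follows essentially the same route as the paper: shift-similarity via Lemma~\ref{lem:gens_suffice} applied to the half-finite-support generators together with $E_2$, using Lemma~\ref{lem:shift_half_finite}, and strong shift-similarity by reducing to the generators of $\Gamma$ via the strong shift-similarity of $E_2$. The only (minor) difference is the last step: the paper exhibits an explicit preimage of $\gamma\Symfin(\N)$ by applying Lemma~\ref{lem:shift_half_finite} to the conjugate $\gamma^{\flip^{-1}}$ or $\gamma^{\flip^{-1}\circ\slide^{-1}}$ (which still has half-finite support), whereas you apply the lemma to $\gamma$ itself and then undo the conjugation inside the image of $\shift_\infty$, using that this image is a subgroup containing $\Germs(E_2)$ --- both are valid.
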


\begin{proof}
First we prove shift-similarity. We already know that $\shift_j(e)\in E_2$ for all $e\in E_2$ by Example~\ref{ex:eventually_periodic}, so by Lemma~\ref{lem:gens_suffice} we just need to check that for any $j\in\N$ and any generator $\gamma$ of $\Gamma$, we have $\shift_j(\gamma)\in G$. Since $\gamma$ has half-finite support, by Lemma~\ref{lem:shift_half_finite} we have that $\shift_j(\gamma)$ is a conjugate of $\gamma$ by $\slide\circ\flip$ or $\flip$, multiplied by an element of $\Symfin(\N)$. Since $E_2$ contains $\slide$, $\flip$, and $\Symfin(\N)$, we conclude that $\shift_j(\gamma) \in G$.

Now we prove strong shift-similarity. Since $E_2$ is strongly shift-similar, it suffices to prove that for any generator $\gamma$ of $\Gamma$, the image of the germ shifting map $\shift_\infty \colon \Germs(G)\to\Germs(G)$ contains $\gamma\Symfin(\N)$. By Lemma~\ref{lem:shift_half_finite}, we can choose $g\in G$ to be one of $\gamma^{\flip^{-1}\circ\slide^{-1}}$ or $\gamma^{\flip^{-1}}$, and get $\shift_1(g)=\gamma\circ f$ for some $f\in\Symfin(\N)$. Thus $\shift(g\Symfin(\N))=\gamma\Symfin(\N)$ and so indeed the image of $\shift$ contains $\gamma\Symfin(\N)$.
\end{proof}

Since $E_2$ is finitely generated (Observation~\ref{obs:E2_fin_gen}), it is now very easy to construct examples of finitely generated strongly shift-similar groups: Let $\gamma_1,\dots,\gamma_k$ be any elements of $\Sym(\N)$ with half-finite support. Then $G=\langle \gamma_1,\dots,\gamma_k,E_2\rangle$ is a finitely generated strongly shift-similar group. Indeed, this shows that finitely generated strongly shift-similar groups abound, in the following sense:

\begin{theorem}\label{thrm:embed_fin_gen}
For any finitely generated group $\Gamma$, there exists a finitely generated strongly shift-similar group $G\le\Sym(\N)$ such that $\Gamma$ embeds as a subgroup of $G$.
\end{theorem}

\begin{proof}
First note that every finitely generated group embeds as a subgroup of $\Sym(\N)$, and hence as a subgroup of $\Sym(2\N)$ (since $\Sym(\N)\cong \Sym(2\N)$). Viewing $\Sym(2\N)$ as a subgroup of $\Sym(\N)$, every permutation in $\Sym(2\N)$ has half-finite support, so we conclude that every finitely generated group $\Gamma$ embeds as a subgroup of $\Sym(\N)$ in such a way that every generator (indeed every element) has half-finite support. Now the group $G$ generated by this image together with $E_2$ is a finitely generated strongly shift-similar group, into which $\Gamma$ embeds.
\end{proof}

\begin{corollary}\label{cor:uncountable}
There exist uncountably many isomorphism classes of finitely generated strongly shift-similar groups.
\end{corollary}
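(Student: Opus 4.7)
The plan is to combine Theorem~\ref{thrm:embed_fin_gen} with the classical fact, due to B.~H.~Neumann, that there exist uncountably many isomorphism classes of finitely generated groups (produced, for instance, as $2$-generated groups varying over the uncountable family of normal subgroups of a free group of rank~$2$).

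The argument is a standard cardinality count. Suppose for contradiction that there are only countably many isomorphism classes of finitely generated strongly shift-similar groups. Any such group $G$ is countable (being a subgroup of $\Sym(\N)$), so it has only countably many finite subsets and hence only countably many finitely generated subgroups; in particular it realizes at most countably many isomorphism classes of finitely generated subgroups. Taking a countable union over isomorphism class representatives, the collection of all isomorphism classes of finitely generated groups that embed into \emph{some} finitely generated strongly shift-similar group would be countable.

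But by Theorem~\ref{thrm:embed_fin_gen}, every finitely generated group embeds into some finitely generated strongly shift-similar group, so this collection is in fact the collection of all isomorphism classes of finitely generated groups, which is uncountable by Neumann's result. This is a contradiction, and the corollary follows.

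There is no real obstacle here; the entire content is packaged in Theorem~\ref{thrm:embed_fin_gen}, and once that embedding result is in hand the corollary is a one-line cardinality argument. The only mild point to be careful about is to invoke the existence of uncountably many isomorphism classes of finitely generated groups as a known fact (with a citation to Neumann if desired), rather than to reprove it.
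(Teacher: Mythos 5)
Your proposal is correct and is essentially identical to the paper's own proof: both argue by contradiction, use the fact that a finitely generated (hence countable) group has only countably many finitely generated subgroups, combine this with Theorem~\ref{thrm:embed_fin_gen}, and contradict the classical fact that there are uncountably many isomorphism classes of finitely generated groups. The only difference is cosmetic, namely your explicit attribution to Neumann.
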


\begin{proof}
Suppose there are only countably many isomorphism classes of finitely generated strongly shift-similar groups. Since any given finitely generated group contains only countably many (isomorphism classes of) finitely generated subgroups, Theorem~\ref{thrm:embed_fin_gen} implies that there exist countably many isomorphism classes of finitely generated groups. But it is well known that there exist uncountably many isomorphism classes of finitely generated groups, so this is a contradiction.
\end{proof}

As we have already pointed out, this contrasts with the self-similar situation, where there exist only countably many isomorphism classes of finitely generated self-similar groups \cite[Subsection~1.5.3]{nekrashevych05}.

Another consequence is the following:

\begin{corollary}\label{cor:undecidable}
There exist finitely generated strongly shift-similar groups with undecidable word problem.
\end{corollary}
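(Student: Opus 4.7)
The plan is to combine Theorem~\ref{thrm:embed_fin_gen} with the classical Novikov--Boone theorem, which furnishes a finitely generated group $H$ with undecidable word problem. Applying Theorem~\ref{thrm:embed_fin_gen} to $H$, I would obtain a finitely generated strongly shift-similar group $G$ containing $H$ as a subgroup. The corollary then reduces to the standard observation that undecidability of the word problem passes from a finitely generated subgroup to any finitely generated overgroup.

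More precisely, let $X$ be a finite generating set for $G$ and let $Y$ be a finite generating set for $H$. For each $y \in Y$, fix once and for all a word $w_y$ over $X^{\pm 1}$ that represents $y$ in $G$; such a word exists because $H \le G$ and $X$ generates $G$. Given any word $u$ over $Y^{\pm 1}$, substituting $w_y$ for each occurrence of $y$ produces a word $\tilde u$ over $X^{\pm 1}$ with the property that $u =_H 1$ if and only if $\tilde u =_G 1$ (using that $H \hookrightarrow G$ is an embedding). Hence a decision procedure for the word problem of $G$ would yield one for $H$, contradicting the choice of $H$. Therefore $G$ has undecidable word problem, and $G$ is a finitely generated strongly shift-similar group of the required type.

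There is essentially no obstacle here beyond invoking the previous theorem: the main technical content, namely producing finitely generated strongly shift-similar groups that contain arbitrary finitely generated groups, is exactly what Theorem~\ref{thrm:embed_fin_gen} supplies, and the word-problem inheritance is a routine formal argument. The only thing to be careful about is that the embedding in Theorem~\ref{thrm:embed_fin_gen} is genuinely an injective homomorphism of abstract groups (so that equality in $H$ really corresponds to equality in $G$), which is immediate from its construction via embedding $H$ into $\Sym(2\N) \le \Sym(\N)$ and then adjoining $E_2$.
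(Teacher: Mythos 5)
Your proposal is correct and follows exactly the paper's own argument: invoke Theorem~\ref{thrm:embed_fin_gen} on a finitely generated group with undecidable word problem and note that undecidability passes to the finitely generated overgroup. The extra detail you supply about rewriting words over the subgroup's generators in terms of the overgroup's generators is the standard justification the paper leaves implicit.
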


\begin{proof}
By Theorem~\ref{thrm:embed_fin_gen}, any finitely generated group with undecidable word problem embeds in some finitely generated strongly shift-similar group, which therefore must also have undecidable word problem.
\end{proof}

Proposition~\ref{prop:half_fin_plus_E2} is concrete enough to allow us to produce tons of concrete examples of finitely generated shift-similar groups. Indeed, we can simply take any finite set of permutations $\gamma_1,\dots,\gamma_k$ of $2\N$, and take the (finitely generated) group of permutations of $\N$ generated by them together with $\alpha$, $\lambda$, and $(1~2)$. Let us record a more precise way of constructing examples of interesting finitely generated shift-similar groups, which could warrant further future investigation.

\begin{example}\label{ex:lots_of_fg_ss}
Let $\Gamma$ be any finitely generated infinite group, and fix a bijection $\nu\colon \Gamma\to \N$. Identify the disjoint union $\Gamma\coprod\Gamma$ with $\N$ by sending the first copy of $\Gamma$ to $2\N$ via $\gamma\mapsto 2\nu(\gamma)$ and sending the second copy of $\Gamma$ to $2\N-1$ via $\gamma\mapsto 2\nu(\gamma)-1$. Now let $G$ be the group of permutations of $\N$ generated by the copy of $\Gamma$ acting on $2\N$ by left multiplication (via the above identification of $\Gamma$ with $2\N$) and trivially on $2\N-1$, and the elements $\alpha$, $\lambda$, and $(1~2)$. By Proposition~\ref{prop:half_fin_plus_E2}, the group $G$ is finitely generated and shift-similar.

More intuitively, if we use $\nu$ to identify $\Gamma\coprod\Gamma$ with $-\N\coprod \N$, where the first copy of $\Gamma$ is identified with $-\N$ and the other with $\N$, then $G$ is the group of permutations of $\Gamma\coprod\Gamma$ where we can do any finitely supported permutation, we can swap the two copies of $\Gamma$ in the canonical way (this is the element $\lambda$), we can have $\Gamma$ act on either copy of itself by translation while fixing the other copy pointwise, and we can ``shift'' left and right viewing $\Gamma\coprod\Gamma$ as $-\N\coprod \N$ (this is the element $\alpha$). We emphasize that $\nu$ can be any bijection whatsoever. It would be especially interesting to investigate whether $G$ could possibly be finitely presented, for some $\Gamma$ of interest and some cleverly chosen $\nu$.
\end{example}

%-------------------------------------------------
\section{Houghton-like groups from shift-similar groups}\label{sec:houghton_like}

In this section, we introduce our groups $H_n(G)$, for $G$ a shift-similar group. These are a sort of ``Houghton-world'' analog of R\"over--Nekrashevych groups. We will once again use the notions of quasi-ray and canonical bijection from Section~\ref{sec:houghton}.

\begin{definition}[The group $H_n(G)$]\label{def:houghton_like}
Let $G\le\Sym(\N)$ be shift-similar. For $n\in \N$, let $H_n(G)$ be the group of bijections from $[n]\times\N$ to itself given as follows:
\begin{enumerate}
    \item Take a partition of $[n]\times\N$ into a finite subset $M_+$ and the corresponding quasi-rays $Q(1,M_+),\dots,Q(n,M_+)$.
    \item Take another partition of $[n]\times\N$ into a finite subset $M_-$ with $|M_-|=|M_+|$ and the corresponding quasi-rays $Q(1,M_-),\dots,Q(n,M_-)$.
    \item Map $[n]\times\N$ to itself by sending $M_+$ to $M_-$ via some bijection $\sigma$, and for each $1\le k\le n$ sending $Q(k,M_+)$ to $Q(k,M_-)$ via $\beta_{k,M_-}\circ g_k \circ \beta_{k,M_+}^{-1}$ for some $g_k\in G$.
\end{enumerate}
\end{definition}

Note that the only difference between this and the definition of $H_n$ is the presence of the elements $g_k\in G$ at the end. Intuitively, in $H_n$ we map one quasi-ray to another quasi-ray by identifying them both with $\N$ via canonical bijections and then mapping $\N$ to itself via the identity, whereas in $H_n(G)$ we do the same thing but map $\N$ to itself via an element of $G$. In particular $H_n=H_n(\{1\})$. We will prove later that $H_n(G)$ really is a group. First it is convenient to establish representative triples.

We can define representative triples for elements of $H_n(G)$ by essentially merging the ideas behind representative triples for $V_d(G)$ and for $H_n$. By a \emph{representative triple} now we mean a triple $(M_-,\sigma(g_1,\dots,g_n),M_+)$, where $M_+$ and $M_-$ are finite subsets of $[n]\times\N$ of the same size, $\sigma$ is a bijection from $M_+$ to $M_-$, and $g_1,\dots,g_n\in G$. At the moment, the notation $\sigma(g_1,\dots,g_n)$ should just be thought of as an ordered pair $(\sigma,(g_1,\dots,g_n))$; we use this notation to continue the analogy with R\"over--Nekrashevych groups, and later when we discuss multiplication in $H_n(G)$ we will explain how to manipulate the notation. The element of $H_n(G)$ represented by $(M_-,\sigma(g_1,\dots,g_n),M_+)$ is the one obtained via the procedure in Definition~\ref{def:houghton_like} using $M_+$, $M_-$, $\sigma$, and $g_1,\dots,g_n$.

As in the previous situations, we can define a notion of expansion:

\begin{definition}[Expansion/reduction/equivalence]
Let $G\le\Sym(\N)$ be shift-similar. Let $(M_-,\sigma(g_1,\dots,g_n),M_+)$ be a representative triple for an element of $H_n(G)$. Let $1\le k\le n$. The $k$th \emph{expansion} of this triple is the triple
\[
(M_-\cup\{\beta_{k,M_-}(g_k(1))\},\sigma'(g_1,\dots,g_{k-1},\shift_1(g_k),g_{k+1},\dots,g_n),M_+\cup\{\beta_{k,M_+}(1)\})\text{,}
\]
where $\sigma'$ is the bijection from $M_+\cup\{\beta_{k,M_+}(1)\}$ to $M_-\cup\{\beta_{k,M_-}(g_k(1))\}$ sending $\beta_{k,M_+}(1)$ to $\beta_{k,M_-}(g_k(1))$ and otherwise acting like $\sigma$. If one triple is an expansion of another, call the second a \emph{reduction} of the first. Call two representative triples \emph{equivalent} if one can be obtained from the other by a finite sequence of expansions and reductions. Write $[M_-,\sigma(g_1,\dots,g_n),M_+]$ for the equivalence class of $(M_-,\sigma(g_1,\dots,g_n),M_+)$.
\end{definition}

We also have an $H_n(G)$ analog of general expansions:

\begin{definition}[General expansion/reduction]
Let $(M_-,\sigma(g_1,\dots,g_n),M_+)$ be a representative triple. The \emph{general expansion} of this triple associated to the pair $(k,j)$ is the triple
\[
(M_-\cup\{\beta_{k,M_-}(g_k(j))\},\sigma'(g_1,\dots,g_{k-1},\shift_j(g_k),g_{k+1},\dots,g_n),M_+\cup\{\beta_{k,M_+}(j)\})\text{,}
\]
where $\sigma'$ is the bijection from $M_+\cup\{\beta_{k,M_+}(j)\}$ to $M_-\cup\{\beta_{k,M_-}(j)\}$ sending $\beta_{k,M_+}(j)$ to $\beta_{k,M_-}(j)$ and otherwise acting like $\sigma$. The reverse operation is called a \emph{general reduction}.
\end{definition}

We should emphasize that defining expansions and general expansions is the key step in all this setup where $G$ needs to be shift-similar, so that $\shift_j(g_k)$ is still an element of $G$.

The following is the $H_n(G)$ version of Lemma~\ref{lem:gen_exp}, and the structure of the proof is the same.

\begin{lemma}\label{lem:gen_exp_with_G}
If one representative triple is a general expansion of the other, then they are equivalent.
\end{lemma}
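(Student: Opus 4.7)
The plan is to mirror the inductive proof of Lemma~\ref{lem:gen_exp}, with the extra bookkeeping needed to track how iterated shifting maps act on the slot $g_k$. I would induct on $j$; the base case $j=1$ is immediate because a general expansion at $(k,1)$ is literally an expansion.

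For the inductive step at $j>1$, I would reach a common ``double expansion'' from both triples. Starting from $(M_-,\sigma(g_1,\dots,g_n),M_+)$, a general expansion at $(k,j-1)$ adjoins $\beta_{k,M_+}(j-1)$ to $M_+$, adjoins $\beta_{k,M_-}(g_k(j-1))$ to $M_-$, and replaces $g_k$ by $\shift_{j-1}(g_k)$. In the new triple one has $\beta_{k,M_+\cup\{\beta_{k,M_+}(j-1)\}}(j-1)=\beta_{k,M_+}(j)$, so by induction a second general expansion at the renumbered index $(k,j-1)$ adjoins $\beta_{k,M_+}(j)$ on the domain and replaces $\shift_{j-1}(g_k)$ by $\shift_{j-1}\shift_{j-1}(g_k)$. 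Starting instead from the claimed target triple (with $M_+\cup\{\beta_{k,M_+}(j)\}$ and $\shift_j(g_k)$ in slot $k$), the identity $\beta_{k,M_+\cup\{\beta_{k,M_+}(j)\}}(j-1)=\beta_{k,M_+}(j-1)$ lets me perform a single general expansion at renumbered index $(k,j-1)$, adjoining $\beta_{k,M_+}(j-1)$ on the domain and replacing $\shift_j(g_k)$ by $\shift_{j-1}\shift_j(g_k)$. Lemma~\ref{lem:two_shifts} applied with both indices equal to $j-1$ yields $\shift_{j-1}\circ\shift_{j-1}=\shift_{j-1}\circ\shift_j$, so these two $g_k$-slots coincide.

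What still needs verification, and is the part I expect to be the main obstacle, is that the range sides and the finite bijections agree in the two resulting triples. Concretely, one must check
\[
\beta_{k,M_-\cup\{\beta_{k,M_-}(g_k(j-1))\}}(\shift_{j-1}(g_k)(j-1)) = \beta_{k,M_-}(g_k(j))
\]
and symmetrically $\beta_{k,M_-\cup\{\beta_{k,M_-}(g_k(j))\}}(\shift_j(g_k)(j-1)) = \beta_{k,M_-}(g_k(j-1))$. Each reduces to a case split on the sign of $g_k(j)-g_k(j-1)$, using the casewise formula for $\shift_j(g)$ together with the observation that inserting $\beta_{k,M_-}(r)$ into $M_-$ corresponds to deleting the $r$-th entry from the enumeration of $Q(k,M_-)$. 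Granting these identities, the bijections extending $\sigma$ in the two constructions agree on both newly added domain points, so both constructions produce literally the same triple, witnessing that the original and the target are equivalent via two general expansions followed by one general reduction and completing the induction.
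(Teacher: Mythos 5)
Your proposal is correct and follows essentially the same route as the paper's proof: induct on $j$, pass from both the original and the target triple to a common double expansion via general expansions at the renumbered index $j-1$, and reconcile the $k$th slots using Lemma~\ref{lem:two_shifts} in the form $\shift_{j-1}\circ\shift_{j-1}=\shift_{j-1}\circ\shift_j$. The range-side identities you flag as the main obstacle do hold by exactly the case split you describe (the paper dispatches them with ``a similar statement for the $M_-$ version''), so no gap remains.
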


\begin{proof}
Say the first triple is $(M_-,\sigma(g_1,\dots,g_n),M_+)$ and the second is
\[
(M_-\cup\{\beta_{k,M_-}(g_k(j))\},\sigma'(g_1,\dots,g_{k-1},\shift_j(g_k),g_{k+1},\dots,g_n),M_+\cup\{\beta_{k,M_+}(j)\})
\]
as above. Let us induct on $j$. If $j=1$ then this is just an expansion, and we are done. Now assume $j>1$. By induction $(M_-,\sigma(g_1,\dots,g_n),M_+)$ is equivalent via a general expansion to
\[
(M_-\cup\{\beta_{k,M_-}(g_k(j-1))\},\sigma''(g_1,\dots,g_{k-1},\shift_{j-1}(g_k),g_{k+1},\dots,g_n),M_+\cup\{\beta_{k,M_+}(j-1)\})\text{,}
\]
for appropriate $\sigma''$. Now observe that $\beta_{k,M_+\cup\{\beta_{k,M_+}(j-1)\}}(j-1)$ equals $\beta_{k,M_+}(j)$ (with a similar statement for the $M_-$ version), so again by induction our triple is equivalent via a general expansion to
\begin{align*}
(M_-\cup\{\beta_{k,M_-}(g_k(j-1)),\beta_{k,M_-}(g_k(j))\},&\\
\sigma'''(g_1,\dots,g_{k-1},\shift_{j-1}&\circ\shift_{j-1}(g_k),g_{k+1},\dots,g_n),\\
&M_+\cup\{\beta_{k,M_+}(j-1),\beta_{k,M_+}(j)\})\text{,}
\end{align*}
for appropriate $\sigma'''$. Next we note that $\beta_{k,M_+\cup\{\beta_{k,M_+}(j)\}}(j-1)$ equals $\beta_{k,M_+}(j-1)$ (with a similar statement for the $M_-$ version), and $\shift_{j-1}\circ \shift_{j-1}=\shift_{j-1}\circ \shift_j$ by Lemma~\ref{lem:two_shifts}, so again by induction (now doing a general reduction) our triple is equivalent to
\[
(M_-\cup\{\beta_{k,M_-}(g_k(j))\},\sigma''''(g_1,\dots,g_{k-1},\shift_j(g_k),g_{k+1},\dots,g_n),M_+\cup\{\beta_{k,M_+}(j)\})
\]
for appropriate $\sigma''''$. It is clear that $\sigma''''=\sigma'$, since they both agree with $\sigma$ on $M_+$ and their domain has only one other point, so we are done.
\end{proof}

We remark that, just like in the $H_n$ case (where $G=\{1\}$), given any two representative triples $(M_-,\sigma(g_1,\dots,g_n),M_+)$ and $(N_-,\tau(h_1,\dots,h_n),N_+)$, up to equivalence we can assume that one of the finite sets $M_-$ or $M_+$ in the first triple equals one of the finite sets $N_-$ or $N_+$ in the second triple, and we can pick which ones we want to be equal.

Now we can establish the analog of Lemma~\ref{lem:triples} for $H_n(G)$, which lets us view $H_n(G)$ as the group of equivalence classes of representative triples.

\begin{lemma}\label{lem:triples_with_G}
Two representative triples are equivalent if and only if the elements of $H_n(G)$ they represent are equal.
\end{lemma}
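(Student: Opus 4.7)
The plan is to follow the structure of the proof of Lemma~\ref{lem:triples}, with the extra bookkeeping demanded by the elements $g_k\in G$ handled via the defining formula for $\shift_j$.

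For the forward direction it suffices, by transitivity of equivalence and Lemma~\ref{lem:gen_exp_with_G}, to check that a single expansion preserves the represented bijection. Let $\eta$ be represented by $(M_-,\sigma(g_1,\dots,g_n),M_+)$, and let $\theta$ be represented by the $k$th expansion, with $M_+'=M_+\cup\{\beta_{k,M_+}(1)\}$, $M_-'=M_-\cup\{\beta_{k,M_-}(g_k(1))\}$, and with $g_k$ replaced by $\shift_1(g_k)$. I would show $\eta=\theta$ by splitting into cases based on where an input $(\ell,i)$ lies: (a) if $(\ell,i)\in M_+$, both maps use $\sigma$; (b) if $(\ell,i)=\beta_{k,M_+}(1)$, both send it to $\beta_{k,M_-}(g_k(1))$; (c) if $\ell\ne k$, the quasi-ray and canonical bijection in position $\ell$ are unchanged; (d) if $\ell=k$ and $(k,i)=\beta_{k,M_+}(j)$ for some $j>1$, I would use that $\beta_{k,M_+'}(j-1)=\beta_{k,M_+}(j)$, together with the casewise formula for $\shift_1(g_k)(j-1)$ (namely $g_k(j)$ or $g_k(j)-1$ depending on whether $g_k(j)<g_k(1)$ or $>g_k(1)$) and the corresponding reindexing identity $\beta_{k,M_-'}(m)=\beta_{k,M_-}(m)$ for $m<g_k(1)$ and $\beta_{k,M_-'}(m)=\beta_{k,M_-}(m+1)$ for $m\ge g_k(1)$, to verify in both subcases that $\theta(k,i)=\beta_{k,M_-}(g_k(j))=\eta(k,i)$.

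For the reverse direction, suppose $(M_-,\sigma(g_1,\dots,g_n),M_+)$ and $(N_-,\tau(h_1,\dots,h_n),N_+)$ both represent $\eta\in H_n(G)$. As noted in the excerpt, by performing a sequence of general expansions (using Lemma~\ref{lem:gen_exp_with_G}) we may assume without loss of generality that the positive finite sets of both triples coincide, say both equal $M_+\cup N_+$. I would then observe that once the positive finite set $P\coloneqq M_+\cup N_+$ and the element $\eta$ are fixed, every remaining piece of a representative triple is forced: the negative finite set must equal $\eta(P)$, the bijection of finite sets must equal $\eta|_P$, and for each $k$ the group element in position $k$ must equal $\beta_{k,\eta(P)}^{-1}\circ\eta|_{Q(k,P)}\circ\beta_{k,P}$. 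Hence after these general expansions the two triples are literally equal, so the original triples are equivalent.

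The main obstacle is the case~(d) verification in the forward direction: one has to line up the casewise formula for $\shift_1(g_k)$ with the reindexing that occurs when the point $\beta_{k,M_-}(g_k(1))$ is inserted into $M_-$, and see that the two ``shifts'' cancel correctly. Everything else is essentially bookkeeping once the right identities for $\beta_{k,M\cup\{\beta_{k,M}(r)\}}$ are in hand, and direction two is a clean uniqueness argument after the positive sets are matched up.
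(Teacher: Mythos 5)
Your proposal is correct and follows essentially the same route as the paper's proof: both directions proceed by checking a single expansion case-by-case on the input $(\ell,i)$ and then, for the converse, passing to a common general expansion with positive set $M_+\cup N_+$ and noting every remaining datum is determined by $\eta$. The only cosmetic difference is that in case~(d) the paper packages your two subcases into the single identity $s_{g_k(1)}\circ\shift_1(g_k)(j)=g_k(s_1(j))$, which is exactly the cancellation you describe.
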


\begin{proof}
Let $\eta,\theta\in H_n(G)$ have equivalent representative triples, and we want to show $\eta=\theta$. Without loss of generality the triples differ by a single expansion, say there is a representative triple of $\theta$ that is an expansion of one for $\eta$. Say $\eta$ has $(M_-,\sigma(g_1,\dots,g_n),M_+)$ as a representative triple and $\theta$ has $(M_-',\sigma'(g_1,\dots,g_{k-1},\shift_1(g_k),g_{k+1},\dots,g_n),M_+')$, where $M_+'=M_+\cup\{\beta_{k,M_+}(1)\}$ and $M_-'=M_-\cup\{\beta_{k,M_-}(g_k(1))\}$. Now we want to show that $\theta(\ell,i)=\eta(\ell,i)$ for all $(\ell,i)\in [n]\times\N$. First suppose $(\ell,i)\in M_+$. Then since $\sigma'(\ell,i)=\sigma(\ell,i)$ we indeed have $\theta(\ell,i)=\eta(\ell,i)$. Next suppose $(\ell,i)=\beta_{k,M_+}(1)$ (so $\ell=k$). Then
\[
\theta(k,i)=\beta_{k,M_-}(g_k(1))=\beta_{k,M_-}\circ g_k \circ \beta_{k,M_+}^{-1}(k,i)=\eta(k,i)
\]
as desired. Finally suppose $(\ell,i)\in Q(\ell,M_+')$. If $\ell\ne k$ then $Q(\ell,M_+')=Q(\ell,M_+)$ and $Q(\ell,M_-')=Q(\ell,M_-)$, so
\[
\theta(\ell,i)=\beta_{\ell,M_-'}\circ g_\ell \circ \beta_{\ell,M_+'}^{-1}(\ell,i)=\beta_{\ell,M_-}\circ g_\ell \circ \beta_{\ell,M_+}^{-1}(\ell,i)=\eta(\ell,i) \text{.}
\]
Now suppose $\ell=k$, and say $(k,i)=\beta_{k,M_+'}(j)$. Note that $\beta_{k,M_+'}(j)=\beta_{k,M_+}(j+1)$ and $\beta_{k,M_-'}(\shift_1(g_k)(j))=\beta_{k,M_-}(s_{g_k(1)}\circ \shift_1(g_k)(j))$. Now we have
\begin{align*}
\theta(k,i)&=\beta_{k,M_-'}\circ \shift_1(g_k) \circ \beta_{k,M_+'}^{-1}(k,i)\\
&=\beta_{k,M_-'}(\shift_1(g_k)(j))\\
&=\beta_{k,M_-}(s_{g_k(1)}\circ \shift_1(g_k)(j))\\
&=\beta_{k,M_-}(g_k(s_1(j)))\\
&=\beta_{k,M_-}(g_k(j+1))\\
&=\beta_{k,M_-} \circ g_k \circ \beta_{k,M_+}^{-1}\circ \beta_{k,M_+'}(j)\\
&=\beta_{k,M_-} \circ g_k \circ \beta_{k,M_+}^{-1}(k,i)\\
&= \eta(k,i) \text{,}
\end{align*}
and we are done.

Now let $(M_-,\sigma(g_1,\dots,g_n),M_+)$ and $(N_-,\tau(h_1,\dots,h_n),N_+)$ be representative triples for the same element $\eta$, and we claim they are equivalent. It suffices to show that they have a common general expansion. The triples have general expansions of the form $(M_-',\sigma'(g_1',\dots,g_n'),M_+\cup N_+)$ and $(N_-',\tau'(h_1',\dots,h_n'),M_+\cup N_+)$ respectively, for some $M_-'$, $\sigma'$, $g_i'$, $N_-'$, $\tau'$, and $h_i'$. Since these both represent $\eta$ (by the first paragraph), we see that $M_-'$ and $N_-'$ both equal $\eta(M_+\cup N_+)$, hence are equal themselves. Also, $\sigma'$ and $\tau'$ each equal the restriction of $\eta$ to $M_+\cup N_+$, hence are equal. Finally, for each $1\le k\le n$ we have that $g_k'$ and $h_k'$ each equal the restriction of $\eta$ to $Q(k,M_+\cup N_+)$ composed on either side by the appropriate canonical bijections (which are the same in either case), so $g_k'=h_k'$. We conclude that $(M_-',\sigma'(g_1',\dots,g_n'),M_+\cup N_+)=(N_-',\tau'(h_1',\dots,h_n'),M_+\cup N_+)$ is a common general expansion of the original triples, as desired.
\end{proof}

Now we can finally confirm that $H_n(G)$ really is a group.

\begin{lemma}
For $G\le\Sym(\N)$ shift-similar, $H_n(G)$ is a group.
\end{lemma}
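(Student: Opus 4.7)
The proof plan is to exploit the bijective correspondence between equivalence classes of representative triples and elements of $H_n(G) \subseteq \Sym([n]\times\N)$ established in Lemma~\ref{lem:triples_with_G}. Since $H_n(G)$ is by construction a subset of the group $\Sym([n]\times\N)$, associativity comes for free, and it suffices to verify that $H_n(G)$ contains the identity and is closed under composition and inversion.

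The identity is represented by $[\varnothing, \id_\varnothing(), \varnothing]$ (equivalently, by any $[M,\id_M(\id,\ldots,\id),M]$). For closure under composition, I would take two elements $\eta_1,\eta_2 \in H_n(G)$ with representative triples $(M_-,\sigma(g_1,\ldots,g_n),M_+)$ and $(N_-,\tau(h_1,\ldots,h_n),N_+)$. The remark following Lemma~\ref{lem:gen_exp_with_G} lets me perform general expansions and assume without loss of generality that $M_+ = N_-$. Then I would prove the composition formula
\[
[M_-,\sigma(g_1,\ldots,g_n),M_+]\cdot[M_+,\tau(h_1,\ldots,h_n),N_+] = [M_-,(\sigma\circ\tau)(g_1 h_1,\ldots,g_n h_n),N_+]
\]
by directly unfolding definitions: on the finite set $N_+$, the composition $\eta_1\circ\eta_2$ clearly acts as $\sigma\circ\tau$, while on each quasi-ray $Q(k,N_+)$ the composition is
\[
(\beta_{k,M_-}\circ g_k\circ \beta_{k,M_+}^{-1})\circ(\beta_{k,M_+}\circ h_k\circ \beta_{k,N_+}^{-1}) = \beta_{k,M_-}\circ(g_k h_k)\circ \beta_{k,N_+}^{-1},
\]
which matches the action prescribed by the right-hand triple. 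Since $G$ is a group, $g_k h_k \in G$, so the right-hand triple is a valid representative. For inverses, an analogous short calculation shows
\[
[M_-,\sigma(g_1,\ldots,g_n),M_+]^{-1} = [M_+,\sigma^{-1}(g_1^{-1},\ldots,g_n^{-1}),M_-],
\]
which again lies in $H_n(G)$ since $g_k^{-1}\in G$.

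The main obstacle is really already absorbed into the preceding lemmas: shift-similarity of $G$ is what makes general expansions well-defined representative triples (because $\shift_j(g_k)$ must lie in $G$), and Lemma~\ref{lem:gen_exp_with_G} together with the remark after it guarantees that general expansions preserve equivalence classes and can always be performed to align the middle sets of any two triples. Once these facts are in hand, the verification that $H_n(G)$ is a group reduces to the two explicit formulas above, both of which are routine.
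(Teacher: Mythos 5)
Your proposal is correct and follows essentially the same route as the paper: perform general expansions to arrange $M_+=N_-$, verify the composition formula $[M_-,\sigma(g_1,\dots,g_n),M_+][M_+,\tau(h_1,\dots,h_n),N_+]=[M_-,(\sigma\circ\tau)(g_1h_1,\dots,g_nh_n),N_+]$ by unfolding the action on the finite set and on each quasi-ray, and read off the inverse formula. Your quasi-ray computation $(\beta_{k,M_-}\circ g_k\circ\beta_{k,M_+}^{-1})\circ(\beta_{k,M_+}\circ h_k\circ\beta_{k,N_+}^{-1})=\beta_{k,M_-}\circ(g_kh_k)\circ\beta_{k,N_+}^{-1}$ is exactly the paper's argument (and is stated consistently with Definition~\ref{def:houghton_like}).
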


\begin{proof}
The composition of two elements of $H_n(G)$ can be computed via the following procedure. Let $[M_-,\sigma(g_1,\dots,g_n),M_+]$ and $[N_-,\tau(h_1,\dots,h_n),N_+]$ be elements. Up to performing general expansions, we can assume without loss of generality that $M_+=N_-$, so also $Q(k,M_+)=Q(k,N_-)$ for each $1\le k\le n$. Now the product
\[
[M_-,\sigma(g_1,\dots,g_n),M_+][M_+,\tau(h_1,\dots,h_n),N_+]
\]
is the self-bijection of $[n]\times\N$ given by sending $N_+$ to $N_-=M_+$ via $\tau$ and then to $M_-$ via $\sigma$, which is to say sending $N_+$ to $M_-$ via $\sigma\circ\tau$, and for each $1\le k\le n$ sending $Q(k,N_+)$ to $Q(k,N_-)=Q(k,M_+)$ via $\beta_{k,N_-}^{-1}\circ h_k \circ \beta_{k,N_+}$ and then to $Q(k,M_-)$ via $\beta_{k,M_-}^{-1}\circ g_k \circ \beta_{k,M_+}$, which since $\beta_{k,N_-}=\beta_{k,M_+}$ is the same as sending $Q(k,N_+)$ to $Q(k,M_-)$ via $\beta_{k,M_-}^{-1}\circ g_k\circ h_k \circ \beta_{k,N_+}$. Thus, we conclude that
\[
[M_-,\sigma(g_1,\dots,g_n),M_+][M_+,\tau(h_1,\dots,h_n),N_+] = [M_-,(\sigma\circ\tau)(g_1 h_1,\dots,g_n h_n),N_+] \text{,}
\]
which is again an element of $H_n(G)$. This shows that $H_n(G)$ is closed under products, and it also shows that the inverse of $[M_-,\sigma(g_1,\dots,g_n),M_+]$ is $[M_+,\sigma^{-1}(g_1^{-1},\dots,g_n^{-1}),M_-]$, which is again in $H_n(G)$, so $H_n(G)$ is a group.
\end{proof}

%-------------------------------------------------
\section{Properties of Houghton-like groups}\label{sec:properties}

In this section we study some properties of the Houghton-like group $H_n(G)$ of a shift-similar group $G$. Our first goal is to inspect the relationship between $H_n(G)$ and $G$, then we will turn to issues of amenability and related properties, and finally we will discuss finite generation, and questions of higher finiteness properties (where very little is known).

\subsection{Relationship between a Houghton-like group and its shift-similar group}\label{ssec:relationship}

The first, somewhat obvious relationship is that $G$ embeds into $H_n(G)$, and in fact $G^n$ embeds into $H_n(G)$.

\begin{observation}\label{obs:G_embed}
For any shift-similar group $G\le\Sym(\N)$ and any $n\in\N$, the map $(g_1,\dots,g_n)\mapsto [\emptyset,(g_1,\dots,g_n),\emptyset]$ is an injective homomorphism $G^n\to H_n(G)$.
\end{observation}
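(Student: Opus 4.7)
The plan is to check well-definedness, the homomorphism property, and injectivity directly from the definitions and the multiplication formula already established in the proof that $H_n(G)$ is a group.

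First, given $(g_1,\dots,g_n) \in G^n$, the triple $(\emptyset, \sigma(g_1,\dots,g_n), \emptyset)$ is a legitimate representative triple, where $\sigma$ is the unique bijection $\emptyset \to \emptyset$. Indeed, with $M_+ = M_- = \emptyset$ the quasi-rays are $Q(k,\emptyset) = \{k\} \times \N$, and the resulting self-bijection of $[n] \times \N$ sends $(k,j) = \beta_{k,\emptyset}(j)$ to $\beta_{k,\emptyset}(g_k(j)) = (k, g_k(j))$. So the map sends $(g_1,\dots,g_n)$ to the element $(k,j) \mapsto (k, g_k(j))$ of $H_n(G)$; in particular it is well defined.

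Second, the homomorphism property follows immediately from the multiplication formula established in the proof that $H_n(G)$ is a group. Since both triples have $M_+ = N_- = \emptyset$, no general expansions are needed to align them, and the formula gives
\[
[\emptyset, (g_1,\dots,g_n), \emptyset] \cdot [\emptyset, (h_1,\dots,h_n), \emptyset] = [\emptyset, (g_1 h_1, \dots, g_n h_n), \emptyset],
\]
which is exactly the image of $(g_1 h_1, \dots, g_n h_n) = (g_1,\dots,g_n)(h_1,\dots,h_n)$.

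Third, injectivity follows from the explicit description of the action in the first step: if $(g_1,\dots,g_n)$ lies in the kernel, then $(k,j) \mapsto (k, g_k(j))$ is the identity on $[n] \times \N$, which forces each $g_k$ to be the identity in $\Sym(\N)$. No step presents a real obstacle; the only thing to watch is the bookkeeping of canonical bijections $\beta_{k,\emptyset}$, but these are simply the obvious identifications $\N \cong \{k\} \times \N$.
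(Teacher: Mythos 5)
Your proposal is correct and follows essentially the same route as the paper: the homomorphism property comes from the multiplication formula for representative triples, and injectivity comes from restricting to the quasi-rays $Q(k,\emptyset)$, exactly as in the paper's (briefer) argument. The extra bookkeeping with $\beta_{k,\emptyset}$ is fine and just makes explicit what the paper leaves implicit.
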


\begin{proof}
This map is obviously a homomorphism, so we just need to check that it is injective. If $[\emptyset,(g_1,\dots,g_n)),\emptyset]$ equals the identity, then for each $1\le k\le n$ the restriction of this element to the quasi-ray $Q(k,\emptyset)$ shows that $g_k=1$.
\end{proof}

We can also dispense with any questions about the finite case easily:

\begin{observation}\label{obs:finite_boring}
For $n\in\N$ and $G\le \Sym(\N)$ a finite shift-similar group, we have $H_n(G)=H_n$.
\end{observation}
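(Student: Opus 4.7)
The plan is to combine Lemma~\ref{lem:finite_shift_sim}, which forces $G = S_m = \Sym(\{1,\dots,m\})$ for some $m \in \N$, with the expansion relations on representative triples for $H_n(G)$. The inclusion $H_n \le H_n(G)$ is immediate, because any element of $H_n$ is represented by a triple of the form $[M_-,\sigma(\id,\dots,\id),M_+]$. So the real content is to show that every element of $H_n(G)$ can be represented by such an ``all-identity'' triple.

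The key technical input is the claim that $\shift_1$ sends $S_m$ into $S_{m-1}$. To verify this, I would simply unpack the casewise definition of $\shift_1$ given after Definition~\ref{def:shift_similar}: if $g \in S_m$ and $i \ge m$, then $g(1) \le m \le i$ and $g(i+1) = i+1 \ge m+1 > g(1)$, so $g(i+1) \ge g(1)$ and we are in the fourth case, giving $\shift_1(g)(i) = g(i+1) - 1 = i$. Iterating, $\shift_1^{m-1}(g) \in S_1 = \{\id\}$, so $\shift_1^{m-1}(g) = \id$ for every $g \in G$.

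Given a representative triple $(M_-,\sigma(g_1,\dots,g_n),M_+)$ for an arbitrary element of $H_n(G)$, I would then perform $m-1$ successive $k$th expansions for each $1 \le k \le n$. Each $k$th expansion replaces the $k$th group entry by its image under $\shift_1$ (while modifying $M_+$, $M_-$, and $\sigma$ in a book-keeping way that does not affect our argument), so after doing it $m-1$ times that entry becomes $\shift_1^{m-1}(g_k) = \id$. Applying this across all $k$ yields an equivalent representative triple $[M'_-,\sigma'(\id,\dots,\id),M'_+]$, which by construction represents an element of $H_n$. Combined with the trivial inclusion, this gives $H_n(G) = H_n$.

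The only genuine obstacle is the verification that $\shift_1(S_m) \subseteq S_{m-1}$; the rest is formal expansion/reduction manipulation already set up in Section~\ref{sec:houghton_like} together with Lemma~\ref{lem:finite_shift_sim}.
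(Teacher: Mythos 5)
Your proposal is correct and follows essentially the same route as the paper: reduce to the fact (from Lemma~\ref{lem:finite_shift_sim}) that $G$ consists of finitely supported permutations, observe that iterating $\shift_1$ kills any such permutation, and then use expansions to replace every group entry of a representative triple by the identity. The paper states this more tersely (working with $G\le\Symfin(\N)$ and ``a finite composition of shifting maps sending $g$ to the identity''), whereas you make the key computation $\shift_1(S_m)\subseteq S_{m-1}$ explicit, which is a correct and welcome level of detail.
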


\begin{proof}
By Lemma~\ref{lem:finite_shift_sim}, $G\le \Symfin(\N)$. Thus for any $g\in G$, there exists a finite composition of shifting maps sending $g$ to the identity. Thus, for any $[M_-,\sigma(g_1,\dots,g_n),M_+]\in H_n(G)$, up to expansions without loss of generality $g_1=\cdots=g_n=1$.
\end{proof}

The infinite $G$ case is, of course, much more interesting. Note that for shift-similar $G\le\Sym(\N)$, the ``first'' Houghton-like group $H_1(G)$ is also a subgroup of $\Sym(\N)$, containing $G$. In fact, the following shows that in the strongly shift-similar case, we have $H_1(G)=G$.

\begin{proposition}\label{prop:H1_equal}
Let $G\le\Sym(\N)$ be a strongly shift-similar group. Then $H_1(G)=G$.
\end{proposition}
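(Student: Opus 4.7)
The containment $G\subseteq H_1(G)$ is immediate from the $n=1$ case of Observation~\ref{obs:G_embed}, since the image $[\emptyset,(g),\emptyset]$ of $g$ under that embedding is literally the self-bijection $g$ of $\N$. So my plan is to tackle the nontrivial containment $H_1(G)\subseteq G$.

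Given $\eta\in H_1(G)$ with representative triple $[M_-,\sigma(g),M_+]$, where $g\in G$ and $K\coloneqq|M_+|=|M_-|$, I will produce an element $h\in G$ that agrees with $\eta$ on a cofinite subset of $\N$. Once such an $h$ is in hand, $\eta\circ h^{-1}$ lies in $\Symfin(\N)$, which sits inside $G$ by Theorem~\ref{thrm:contain_symfin}, so $\eta=(\eta\circ h^{-1})\circ h\in G$ and we are done.

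The construction of $h$ is where strong shift-similarity enters. By Lemma~\ref{lem:strong_shift1_surj}, the restriction $\shift_1|_{\Stab_G(1)}\colon\Stab_G(1)\to G$ is an isomorphism, so its inverse can be iterated $K$ times starting from $g$ to produce an element $h\in G$ with $\shift_1^K(h)=g$. A short induction on $K$, using the casewise formula for $\shift_1$ displayed after Definition~\ref{def:shift_similar}, will show that $h$ fixes the initial segment $\{1,\dots,K\}$ pointwise and satisfies $h(j)=g(j-K)+K$ for all $j>K$.

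To finish I will unwind the definition of $\eta$: for $i>\max M_+$ the canonical bijection $\beta_{1,M_+}^{-1}$ sends $i$ to $i-K$, and for $k$ sufficiently large $\beta_{1,M_-}$ sends $k$ to $k+K$, which combine to give $\eta(i)=\beta_{1,M_-}(g(\beta_{1,M_+}^{-1}(i)))=g(i-K)+K=h(i)$ for all sufficiently large $i$, as required. The only delicate point is the induction producing $h$ with the correct fixed initial segment: each iterated pre-image lies in $\Stab_G(1)$ only by construction, and one must check that these constraints accumulate so that the final $h$ actually fixes all of $\{1,\dots,K\}$. This is where strong (rather than mere) shift-similarity is essential, since otherwise the inverses of the $\shift_1$ maps need not exist.
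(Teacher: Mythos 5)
Your proposal is correct and rests on the same two ingredients as the paper's proof: Lemma~\ref{lem:strong_shift1_surj}, used to invert the shifting map once for each point of the finite set, and the containment $\Symfin(\N)\le G$ from Theorem~\ref{thrm:contain_symfin} to absorb the finite discrepancy. The only difference is bookkeeping --- the paper first normalizes to $\sigma=\id$ and $M_-=M_+$ and then depletes $M$ one point at a time via general reductions of the representative triple, whereas you construct $h$ explicitly by iterating $(\shift_1|_{\Stab_G(1)})^{-1}$ and correct by a finitely supported permutation at the end; your induction giving $h(j)=g(j-K)+K$ for $j>K$ and the cofinite agreement with $\eta$ both check out.
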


\begin{proof}
We need to prove that $H_1(G)\le G$. Since $G$ is strongly shift-similar, we already know $G$ contains $H_1=\Symfin(\N)$ by Theorem~\ref{thrm:contain_symfin}. Let $[M_-,\sigma g,M_+]\in H_1(G)$, so $M_-,M_+\subseteq \N$ with $|M_-|=|M_+|<\infty$, $\sigma\colon M_+\to M_-$ is a bijection, and $g\in G$. Since $[M_+,\sigma^{-1},M_-]\in H_1\le G$, without loss of generality we have $M_-=M_+$, call it $M$, and $\sigma$ is the identity. Now we have $[M,g,M]$, and we need to show that it lies in $G$. If $M=\emptyset$ we are done, so assume $M\ne\emptyset$. Pick some $j\in M$. Since $G$ is strongly shift-similar, Lemma~\ref{lem:strong_shift1_surj} says that we can choose $g'\in \Stab_G(j)$ such that $g=\shift_j(g')$, and hence
\[
[M\setminus\{j\},g',M\setminus\{j\}] = [M,g,M] \text{.}
\]
Repeating this trick until $M$ is depleted, we end up with $[\emptyset,g'',\emptyset]=[M,g,M]$ for some $g''\in G$, so $[M,g,M]\in G$.
\end{proof}

\begin{remark}
For $G$ a finite shift-similar group, by Observation~\ref{obs:finite_boring} we have $H_1(G)=H_1=\Symfin(\N)$, which is not finitely generated. However, for $G\le\Sym(\N)$ any finitely generated, infinite, strongly shift-similar group (for example any of the uncountably many examples from Corollary~\ref{cor:uncountable}), thanks to Proposition~\ref{prop:H1_equal} we have that $H_1(G)=G$ is finitely generated. In particular $H_1(G)$ has better finiteness properties than $H_1$ when $G$ is strongly shift-similar. It is hard to tell whether to view this as unexpected behavior, in comparison to the ``Thompson world'' situation, since all the $V_d$ are finitely generated and even of type $\F_\infty$, so in passing from $V_d$ from $V_d(G)$ there is no chance of improving the finiteness properties anyway. However, there are situations where $V_d(G)$ has better finiteness properties than $G$ (e.g., when $G$ is the non-finitely presentable Grigorchuk group and $V_2(G)$ is the type $\F_\infty$ R\"over group \cite{belk16}), so perhaps it is not unexpected that $H_1(G)$ could have better finiteness properties than its constituent component $H_1$.
\end{remark}

Even if $G$ is shift-similar but not strongly shift-similar (the possibility of which is asked in Question~\ref{quest:not_strong}), so $H_1(G)$ and $G$ are different, we still get that $H_1(G)$ is shift-similar, and more generally any of the $H_n(G)$ admit shift-similar representations, as we now explain. In fact the $H_n(G)$ are all strongly shift-similar.

\begin{proposition}\label{prop:HnG_shift_sim}
Let $G\le\Sym(\N)$ be shift-similar. Then for any $n\in\N$, the Houghton-like group $H_n(G)$ admits a strongly shift-similar representation. Moreover, for any $m,n\in\N$ we have $H_m(H_n(G))\cong H_{mn}(G)$.
\end{proposition}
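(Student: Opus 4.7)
The plan for the first part is to take the representation $H_n(G) \hookrightarrow \Sym(\N)$ given by $h \mapsto \xi \circ h \circ \xi^{-1}$, where $\xi\colon [n]\times\N \to \N$, $\xi(k,i) = k+(i-1)n$, is the bijection from the proof of Lemma~\ref{lem:even_per_houghton}; call this representation $\omega^{-1}$ and write $\Gamma = \omega^{-1}(H_n(G))$. Since every finitely supported permutation of $[n]\times\N$ lies in $H_n(G)$, we get $\Symfin(\N) \le \Gamma$, so by Lemma~\ref{lem:stab_suffices} the shift-similarity of $\Gamma$ reduces to checking that $\shift_1$ carries $\Stab_\Gamma(1)$ into $\Gamma$. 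Taking a representative triple $[M_-, \sigma(g_1,\dots,g_n), M_+]$ for a stabilizer element with $(1,1)\in M_+\cap M_-$ and $\sigma(1,1)=(1,1)$, the formula $\shift_1(g)(r) = g(r+1)-1$ on large inputs shows that $\shift_1(\omega^{-1}(h))$ coincides, up to a finitely supported modification, with the image under $\omega^{-1}$ of the element of $H_n(G)$ with local elements $(g_2, \dots, g_n, \shift_1(g_1))$, all of which lie in $G$ by the shift-similarity of $G$.

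The harder step is strong shift-similarity, and this is where the main obstacle lies. I would describe $\Germs(\Gamma)$ explicitly: the germ of $\omega^{-1}(h)$ is determined by the tuple of residue-class functions $i \mapsto g_k(i - m_k^+) + m_k^-$, where $m_k^\pm = |M_\pm \cap (\{k\}\times\N)|$, subject to the global equality $\sum_k m_k^+ = \sum_k m_k^-$. A computation parallel to the previous one shows that $\shift_\infty$ sends the tuple $(g_k,m_k^+,m_k^-)_k$ to the one obtained by cyclically shifting positions together with decrementing both $m$-parameters in the new $n$-th coordinate by $1$. To produce a preimage of a given germ, I would use the reverse cyclic shift whose new first entry is $(g_n, m_n^+ + 1, m_n^- + 1)$: all the $m$-parameters are nonnegative, the global constraint is preserved because both sums are increased by $1$, and the group entries all lie in $G$, so the preimage really does belong to $\Germs(\Gamma)$. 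The naive identification $\Germs(\Gamma)\cong\Germs(G)^n$ would spuriously require $G$ itself to be strongly shift-similar; the extra $(m_k^+, m_k^-)$ data carried by the germ provides exactly the slack needed to absorb the ``$-1$'' produced by shifting.

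For the second assertion, fix a bijection $[m]\times[n] \leftrightarrow [mn]$ and use $\omega$ on each ray of $[m]\times\N$ to identify $[m]\times\N \cong [m]\times[n]\times\N \cong [mn]\times\N$. I would define $\Phi \colon H_m(H_n(G)) \to H_{mn}(G)$ by ``unpacking'' a representative triple $[N_-, \tau(h_1,\dots,h_m), N_+]$ with each $h_k = [M^{(k)}_-, \sigma^{(k)}(g^{(k)}_1,\dots,g^{(k)}_n), M^{(k)}_+]$: its image is the element of $H_{mn}(G)$ whose finite support combines $N_\pm$ (transported into $[m]\times[n]\times\N$) with each $M^{(k)}_\pm$ (sitting in $\{k\}\times[n]\times\N$), whose bijection on the combined finite support is the natural composition of $\tau$ with the $\sigma^{(k)}$, and whose $mn$ local elements are the $g^{(k)}_l$ indexed by $(k,l)\in[m]\times[n]$. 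Well-definedness on equivalence classes follows by checking that outer and inner expansions in $H_m(H_n(G))$ become expansions in $H_{mn}(G)$ (using Lemma~\ref{lem:gen_exp_with_G}); the homomorphism property follows from the composition formulas in both descriptions after arranging common finite parts; and the inverse is constructed by partitioning a representative triple of $H_{mn}(G)$ along the fibers $\{k\}\times[n]$. The main hurdle here is purely bookkeeping, to keep the various finite parts and the chosen ordering of $[m]\times[n] \cong [mn]$ consistent throughout.
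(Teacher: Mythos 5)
Your argument is correct and follows essentially the same route as the paper's: the same representation via $\omega^{-1}$ and the bijection $\xi$, the same reduction to point stabilizers via Lemma~\ref{lem:stab_suffices}, the same cyclic-permutation description of $\shift_1$ acting on $H_n(G)$ (with the reverse cyclic shift plus an added fixed point yielding strong shift-similarity), and the same unpacking of quasi-rays for $H_m(H_n(G))\cong H_{mn}(G)$. One small correction: with your normalization $(1,1)\in M_+\cap M_-$ the deleted point lies in the finite sets rather than in the quasi-rays, so the new local tuple is $(g_2,\dots,g_n,g_1)$ with no shifting map applied to $g_1$ (the map $\shift_1$ gets applied to $g_1$ only when $(1,1)\notin M_+$, and in general $\shift_1(g_1)$ and $g_1$ do not differ by a finitely supported permutation); this does not affect your conclusion, since either tuple consists of elements of $G$ and hence gives an element of $H_n(G)$.
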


\begin{proof}
Let $\xi\colon [n]\times\N \to \N$ be the bijection
\[
\xi(k,i) = k+(i-1)n\text{,}
\]
and $\omega\colon \Sym(\N) \to \Sym([n]\times\N)$ the induced isomorphism $\omega(g)=\xi^{-1}\circ g\circ \xi$, both from the proof of Lemma~\ref{lem:even_per_houghton}. We claim that $\omega^{-1}(H_n(G))\le \Sym(\N)$ is strongly shift-similar. Since $H_n(G)$ contains $\Symfin([n]\times\N)$ and $\omega$ preserves size of support, $\omega^{-1}(H_n(G))$ contains $\Symfin(\N)$. Thus by Lemma~\ref{lem:stab_suffices}, it suffices to prove that for any element of $\omega^{-1}(H_n(G))$ taking $1$ to $1$, its image under $\shift_1$ is also in $\omega^{-1}(H_n(G))$.

We can conjugate by $\xi$ to rephrase all of this with respect to $[n]\times\N$ instead of $\N$. We will use the same notation $s_1$ and $\shift_1$ for the induced functions. Upon conjugating by $\xi$, the bijection $s_1\colon \N\to \N\setminus\{1\}$ defined by $s_1(i)=i+1$ becomes a bijection
\[
s_1 \colon [n]\times\N \to ([n]\times\N)\setminus\{(1,1)\}
\]
defined by
\[
s_1(k,i)=\left\{\begin{array}{ll}(k+1,i) &\text{ if } k<n \\
(1,i+1) &\text{ if } k=n\text{.}\end{array}\right.
\]
Our rephrased goal now is to show that for any $\eta\in H_n(G)$ fixing $(1,1)$, we have that
\[
\shift_1(\eta)\coloneqq s_1^{-1}\circ \eta|_{([n]\times\N)\setminus\{(1,1)\}} \circ s_1
\]
also lies in $H_n(G)$. For the sake of intuition, first consider the case when $\eta=[\emptyset,(g_1,\dots,g_n),\emptyset]$ (so our assumption means $g_1(1)=1$). Then it is clear that $\shift_1(\eta)=[\emptyset,(g_2,\dots,g_n,\shift_1(g_1)),\emptyset]$, which is indeed an element of $H_n(G)$. More generally, we see that for $\eta$ of the form $[M_-,\sigma(g_1,\dots,g_n),M_+]$, the element $\shift_1(\eta)$ is obtained by cyclically permuting the $g_k$, either applying a shifting map to $g_1$ (if $(1,1)\not\in M_+$) or not (if $(1,1)\in M_+$), and adjusting $M_+$ and $M_-$ appropriately. In particular we get another element of $H_n(G)$ as desired. This shows that $H_n(G)$ is shift-similar. The above discussion also makes it clear that it is strongly shift-similar, since for example we can cyclically permute the $g_k$ in the other direction and add a new (fixed) point to $M_+$ and $M_-$.

Finally, we need to explain why $H_m(H_n(G))\cong H_{mn}(G)$. This is most easily seen using partitions into finite subsets and quasi-rays. An element of $H_m(H_n(G))$ is given by partitioning $[m]\times\N$ into a finite subset $M_+$ and quasi-rays $Q(k,M_+)$ ($1\le k\le m$), doing it a second time using some $M_-$, mapping $M_+$ bijectively to $M_-$ via some $\sigma$, and mapping each $Q(k,M_+)$ bijectively to $Q(k,M_-)$ via some element $\eta_k\in H_n(G)$ composed with appropriate canonical bijections. Here we view $H_n(G)$ as a subgroup of $\Sym(\N)$ as above. Now for each $k$, use the canonical bijection $\beta_{k,M_+}$ and the bijection $\xi$ to identify $Q(k,M_+)$ with $[n]\times\N$, and similarly for $Q(k,M_-)$. Then we can describe $\eta_k$ by partitioning $Q(k,M_+)$ into a finite subset $N_+^k$ and quasi-rays $Q(\ell,N_+^k)$ ($1\le \ell\le n$), similarly partitioning $Q(k,M_-)$ into some $N_-^k$ and its quasi-rays, mapping $N_+^k$ to $N_-^k$ via some bijection, and mapping $Q(\ell,N_+^k)$ to $Q(\ell,N_-^k)$ via some element $g_\ell^k$ of $G$ composed with appropriate canonical bijections. All in all, this is equivalent to considering $([m]\times[n])\times\N$ and looking at all the self-bijections obtained by partitioning $([m]\times[n])\times\N$ into a finite subset $P_+$ and a ``quasi-ray'' $Q((k,\ell),P_+)$ for each $(k,\ell)\in [m]\times[n]$, doing it a second time with some $P_-$, mapping $P_+$ to $P_-$ via some bijection $\upsilon$, and sending $Q((k,\ell),P_+)$ to $Q((k,\ell),P_-)$ bijectively via $g_\ell^k$ composed with appropriate ``canonical bijections''. This is clearly isomorphic to $H_{mn}(G)$, as desired.
\end{proof}

In particular $H_1(G)$ is always strongly shift-similar, and can be viewed in some sense as a ``strengthening'' of $G$, if $G$ is shift-similar but not strongly shift-similar. Viewing $H_1(G)$ as a strongly shift-similar group, we can consider its group of germs at infinity $\Germs(H_1(G))$. The following shows that if $G$ does not equal $H_1(G)$ then in fact they differ quite a lot, by virtue of $\Germs(H_1(G))$ being much larger than $\Germs(G)$.

\begin{corollary}\label{cor:germ_union}
If $G\le \Symfin(\N)$ is infinite and shift-similar but not strongly shift-similar, then $\Germs(H_1(G))$ is a properly ascending directed union of copies of $\Germs(G)$.
\end{corollary}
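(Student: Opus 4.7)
The plan is to construct the directed union explicitly using the germ shifting automorphism of $H_1(G)$. By Proposition~\ref{prop:HnG_shift_sim}, $H_1(G)$ is strongly shift-similar, so its germ shifting map $\shift_\infty\colon\Germs(H_1(G))\to\Germs(H_1(G))$ is an \emph{automorphism}. The inclusion $G\hookrightarrow H_1(G)$ (Observation~\ref{obs:G_embed} with $n=1$) descends to a homomorphism $\Germs(G)\to\Germs(H_1(G))$, which is injective because $G\cap\Symfin(\N)=\Symfin(\N)$ in both groups; I will identify $\Germs(G)$ with its image $A_0\le\Germs(H_1(G))$, and then define $A_k\coloneqq\shift_\infty^{-k}(A_0)$ for each $k\ge 0$. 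Since $\shift_\infty$ is an automorphism, each $A_k$ is isomorphic to $A_0\cong\Germs(G)$.

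Next I would check that $\shift_\infty|_{A_0}$ coincides with the germ shifting map $\shift_\infty^G$ of $G$ under this identification; both are induced by the single map $\shift_1\colon\Sym(\N)\to\Sym(\N)$, and shift-similarity of $G$ guarantees that $\shift_1(g)\in G$ for $g\in G$, so there is no ambiguity. This immediately gives $\shift_\infty(A_0)\subseteq A_0$, equivalently $A_0\subseteq A_1\subseteq A_2\subseteq\cdots$. To see the inclusions are strict, I use that $G$ is \emph{not} strongly shift-similar: $\shift_\infty^G$ fails to be surjective, so there is some $y\in A_0$ outside $\shift_\infty(A_0)$. The bijectivity of $\shift_\infty$ on $\Germs(H_1(G))$ furnishes a unique preimage $x$ with $\shift_\infty(x)=y$; if $x$ were in $A_0$, then $y=\shift_\infty(x)$ would lie in $\shift_\infty(A_0)$, contradicting the choice of $y$. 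Hence $x\in A_1\setminus A_0$, and pulling this witness back through iterates of $\shift_\infty^{-1}$ yields $A_k\subsetneq A_{k+1}$ for every $k$.

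The last step is to verify that $\bigcup_k A_k$ exhausts $\Germs(H_1(G))$. A general element is represented by a triple $[M_-,\sigma g,M_+]$ with $|M_+|=|M_-|=k$; a short computation with the canonical bijections $\beta_{1,M_\pm}$ shows that for all but finitely many $i\in\N$ this element acts as $i\mapsto g(i-k)+k$. Applying $\shift_1$ once strips off the outer $+k$ and the inner $-k$ by one each (on a cofinite set), and iterating $k$ times leaves an element agreeing with $g$ on a cofinite subset. Thus $\shift_\infty^k$ sends the germ of $[M_-,\sigma g,M_+]$ into $A_0$, i.e.\ the germ lies in $A_k$.

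The main obstacle I anticipate is purely bookkeeping: pinning down precisely in what sense ``$\shift_\infty$ restricted to $A_0$ equals $\shift_\infty^G$,'' and making the cofinite-behavior calculation for the last step unambiguous (in particular that $g(i-k)\notin M_-$ for all but finitely many $i$, so the canonical bijection $\beta_{1,M_-}$ acts by $+k$ on $g(i-k)$). Once those routine verifications are in place, the three ingredients---$\shift_\infty$ is an automorphism of $\Germs(H_1(G))$, it restricts to the non-surjective $\shift_\infty^G$ on $A_0$, and every germ is eventually pushed into $A_0$---assemble into the desired proper ascending directed union.
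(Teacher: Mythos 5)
Your proposal is correct and follows essentially the same route as the paper: both use that $\shift_\infty$ is an automorphism of $\Germs(H_1(G))$ because $H_1(G)$ is strongly shift-similar, realize the terms of the union as preimages $\shift_\infty^{-k}(\Germs(G))$, show exhaustion by applying shifting maps to a representative triple $[M_-,\sigma g,M_+]$ until its germ lands in $\Germs(G)$, and derive properness from the non-surjectivity of the germ shifting map of $G$. The only cosmetic differences are that you iterate $\shift_1$ exactly $k=|M_+|$ times to match germs (the paper iterates $\max M_+$ times to land literally in $G$ --- both are valid) and that you prove the stronger statement that every consecutive inclusion $A_k\subsetneq A_{k+1}$ is strict.
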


\begin{proof}
Since $H_1(G)$ is strongly shift-similar, $\shift_\infty\colon \Germs(H_1(G))\to \Germs(H_1(G))$ is an automorphism. Since $G$ is not strongly shift-similar, the restriction of $\shift_\infty$ to $\Germs(G)\to \Germs(G)$ is not surjective, so $\Germs(G)$ is a proper subgroup of $\Germs(H_1(G))$. For any $\eta=[M_-,\sigma g,M_+]\in H_1(G)$, if $m$ is the maximum element of $M_+$, then $\shift_1^m(\eta)\in G$. Hence, every element of $\Germs(H_1(G))$ lies in the preimage of $\Germs(G)$ under some $\shift_\infty^m$. This shows that $\Germs(H_1(G))$ is the directed union of the $(\shift_\infty^m)^{-1}(\Germs(G))$, which are each isomorphic to $\Germs(G)$ since $\shift_\infty$ is an injective homomorphism. Finally, to see that these terms are properly ascending, i.e., that every $(\shift_\infty^m)^{-1}(\Germs(G))$ is a proper subgroup of $\Germs(H_1(G))$, note that $\shift_\infty^m$ is surjective on $\Germs(H_1(G))$, but $\Germs(G)\ne \Germs(H_1(G))$.
\end{proof}

This is a good point to remind the reader of Question~\ref{quest:not_strong}, which asks whether there actually exist infinite shift-similar groups that are not strongly shift-similar.

\subsection{Amenability and its relatives}\label{ssec:amenable}

In this subsection we inspect amenability and related properties, with the main result being that a strongly shift-similar $G$ is amenable if and only if $H_n(G)$ is. The related properties we consider are elementary amenability (which implies amenability) and the property of containing no non-abelian free subgroups (which is implied by amenability), and we will see that the analogous results hold for them as well. Note that for $\mathcal{P}$ any one of these three properties, any directed union of finite groups has property $\mathcal{P}$, any abelian group has property $\mathcal{P}$, and any extension of groups with property $\mathcal{P}$ has property $\mathcal{P}$. Let us also record the following useful slight generalization of this last fact:

\begin{observation}\label{obs:extend_amenable}
Let $\mathcal{P}$ be any of the properties of being elementary amenable, being amenable, or containing no non-abelian free subgroups. Let $G$ be a group generated by a subgroup $H$ together with a normal subgroup $N$. If $H$ and $N$ both satisfy property $\mathcal{P}$ then so does $G$.
\end{observation}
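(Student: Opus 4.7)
The plan is to reduce the claim to the well-known fact that each of the three candidate properties $\mathcal{P}$ is preserved under taking quotients and under extensions. First I would note that since $N$ is normal in $G$, the product $HN$ is itself a subgroup of $G$ containing both $H$ and $N$; hence $G=HN$, and the second isomorphism theorem gives $G/N = HN/N \cong H/(H\cap N)$, so $G/N$ is a quotient of $H$, and $G$ fits into a short exact sequence $1\to N\to G\to G/N\to 1$.

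Next I would verify, for each of the three properties, that it is closed under quotients and under extensions. For elementary amenability and amenability these are both classical. For the property of containing no non-abelian free subgroup, closure under quotients is a one-line lifting argument: if $H$ surjects onto a group $Q$ containing a non-abelian free subgroup $\langle x_1,x_2\rangle$, then arbitrary lifts $\tilde x_1,\tilde x_2\in H$ generate a free subgroup of $H$, since any non-trivial reduced word in the $\tilde x_i$ projects to the corresponding non-trivial reduced word in $Q$. Closure under extensions in this case is where the real work lies: given $1\to N\to G\to Q\to 1$ with $N$ and $Q$ both having the property, and a non-abelian free subgroup $F\le G$, I would examine $F\cap N$, which is normal in $F$. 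If $F\cap N=1$ then $F$ embeds in $Q$, a contradiction. Otherwise $F\cap N$ is a non-trivial normal subgroup of a non-abelian free group; it is free by Nielsen--Schreier, and since non-abelian free groups contain no non-trivial cyclic normal subgroup (the normalizer of any cyclic subgroup of a free group coincides with its centralizer, which is itself cyclic), the subgroup $F\cap N$ must be free of rank at least two, contradicting the assumption on $N$.

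With these closure properties in hand, the conclusion is immediate: since $H$ has $\mathcal{P}$, so does its quotient $G/N\cong H/(H\cap N)$, and then $G$, as an extension of a group with $\mathcal{P}$ by a group with $\mathcal{P}$, also has $\mathcal{P}$. The only mildly non-routine ingredient is the structural fact about normal subgroups of non-abelian free groups used to handle the third property; beyond that the argument is a formal chase through standard closure properties, and I expect this is the only step worth spelling out carefully.
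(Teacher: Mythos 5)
Your proof is correct and follows essentially the same route as the paper: write $G=HN$, use the second isomorphism theorem to see $G/N\cong H/(H\cap N)$ is a quotient of $H$, and conclude by closure of $\mathcal{P}$ under quotients and extensions. The only difference is that you also verify the quotient and extension closure for the ``no non-abelian free subgroups'' property, which the paper simply records as a known fact; your verification of it is correct.
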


\begin{proof}
We have $G=HN$, so $G/N \cong H/(H\cap N)$. Since $H$ has property $\mathcal{P}$, and property $\mathcal{P}$ is inherited by quotients, this shows that $G/N$ has property $\mathcal{P}$. Since $N$ has property $\mathcal{P}$, and an extension of a group with property $\mathcal{P}$ by another group with property $\mathcal{P}$ has property $\mathcal{P}$, we conclude that $G$ has property $\mathcal{P}$.
\end{proof}

The Houghton group $H_n$ naturally surjects onto $\Z^{n-1}$, and it turns out this extends to any $H_n(G)$, as we now show. Specializing this coming proof to $G=\{1\}$ also provides a new way of viewing the map from $H_n$ onto $\Z^{n-1}$ in terms of representative triples.

\begin{lemma}\label{lem:map_to_free_abelian}
Let $G$ be an infinite shift-similar group. The group $H_n(G)$ admits a map onto $\Z^{n-1}$, whose kernel is generated by $\Symfin([n]\times\N)$ and $H_1(G)^n$.
\end{lemma}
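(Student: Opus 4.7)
The plan is to define the map $\phi \colon H_n(G) \to \Z^{n-1}$ by
\[
\phi([M_-,\sigma(g_1,\dots,g_n),M_+]) = (|M_-^1|-|M_+^1|,\dots,|M_-^n|-|M_+^n|),
\]
where $M_\pm^k \coloneqq M_\pm \cap (\{k\}\times\N)$. The image lies in $\{(m_1,\dots,m_n) \in \Z^n : \sum m_k = 0\} \cong \Z^{n-1}$ because $|M_-| = |M_+|$. First I would verify that $\phi$ is well-defined on equivalence classes: inspecting the definition of general expansion, both $\beta_{k,M_+}(j)$ and $\beta_{k,M_-}(g_k(j))$ lie in the $k$th ray, so the $k$th expansion increases $|M_+^k|$ and $|M_-^k|$ by one simultaneously and leaves the other coordinates untouched. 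Then $\phi$ is a homomorphism by the multiplication formula: if $M_+ = N_-$ is arranged, the $k$th component of $\phi$ of the product is $|M_-^k|-|N_+^k|$, which is the sum of the individual values. Surjectivity is witnessed by the explicit elements $[\{(k,1)\}, \sigma_{k\ell}, \{(\ell,1)\}]$ with $\sigma_{k\ell}\colon (\ell,1)\mapsto(k,1)$ and all $g_i=1$, whose images $e_k - e_\ell$ generate the target.

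For the kernel identification, one inclusion is immediate. Any $s \in \Symfin([n]\times\N)$ admits the representative $[\operatorname{supp}(s), s|_{\operatorname{supp}(s)}, \operatorname{supp}(s)]$ (with trivial $g_i$'s), which visibly has $M_+ = M_-$, so $\phi(s) = 0$. For $H_1(G)^n$, embedded into $H_n(G)$ as in Proposition~\ref{prop:HnG_shift_sim} and Observation~\ref{obs:G_embed}, an element $(h_1,\dots,h_n)$ with each $h_k = [A_-^k, \tau^k g^k, A_+^k]$ gives a representative whose $M_\pm$ decomposes as $\bigsqcup_k (\{k\}\times A_\pm^k)$ and whose bijection $\sigma$ is the disjoint union of the $\tau^k$. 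Since each $h_k$ is a bijection of $\N$, we have $|A_+^k| = |A_-^k|$, so $\phi$ vanishes on $H_1(G)^n$.

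The substantive direction is $\ker\phi \subseteq \Symfin([n]\times\N)\cdot H_1(G)^n$. Given $[M_-,\sigma(g_1,\dots,g_n),M_+]$ in the kernel, the equalities $|M_+^k| = |M_-^k|$ allow us to choose a \emph{ray-preserving} bijection $\tau\colon M_+ \to M_-$, i.e., one sending $M_+^k$ to $M_-^k$ for each $k$. Setting $\alpha \coloneqq \sigma\circ\tau^{-1} \in \Sym(M_-)$, the group law in $H_n(G)$ yields the factorization
\[
[M_-,\sigma(g_1,\dots,g_n),M_+] = [M_-,\alpha,M_-]\cdot[M_-,\tau(g_1,\dots,g_n),M_+].
\]
The first factor lies in $\Symfin([n]\times\N)$ because $\alpha$ permutes the finite set $M_-$ and all quasi-ray data is trivial; the second factor lies in $H_1(G)^n$ because $\tau$ is ray-preserving, so the element acts on each ray $\{k\}\times\N$ independently via $[M_-^k, \tau|_{M_+^k}(g_k), M_+^k] \in H_1(G)$. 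Since $\Symfin([n]\times\N)$ is clearly normal in $H_n(G)$ (conjugation preserves finiteness of support), $\langle \Symfin([n]\times\N), H_1(G)^n\rangle$ equals the product $\Symfin([n]\times\N)\cdot H_1(G)^n$, finishing the proof.

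The main obstacle will be the factorization step, where one has to keep careful track of how the ray-preserving bijection $\tau$ interacts with the multiplication formula, and to verify explicitly that $[M_-,\tau(g_1,\dots,g_n),M_+]$ coincides with the image of an element of $H_1(G)^n$ under the natural embedding arising from Proposition~\ref{prop:HnG_shift_sim}; everything else is bookkeeping.
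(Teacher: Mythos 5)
Your proposal is correct and follows essentially the same route as the paper: the map is the tuple of differences $C_k(M_-)-C_k(M_+)$ landing in the sum-zero subgroup of $\Z^n$, and the kernel computation proceeds by correcting $\sigma$ by a finitely supported permutation to make it ray-preserving, leaving an element of the natural copy of $H_1(G)^n$. Your write-up simply makes explicit the factorization $[M_-,\alpha,M_-]\cdot[M_-,\tau(g_1,\dots,g_n),M_+]$ that the paper states in one line.
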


Here $H_1(G)^n$ means the natural copy of $H_1(G)^n$ inside of $H_n(G)$, given by viewing $[n]\times\N$ as the disjoint union of $n$ copies of $\N$ and taking $H_1(G)$ on each copy. Note that when $G=\{1\}$ we have $H_1(\{1\})^n=H_1^n\le \Symfin([n]\times\N)$, so this result recovers the fact that the kernel of $H_n\to\Z^{n-1}$ is $\Symfin([n]\times\N)$.

\begin{proof}[Proof of Lemma~\ref{lem:map_to_free_abelian}]
For each $1\le k\le n$ and each finite subset $M\subseteq [n]\times\N$, let $C_k(M)$ denote the cardinality of the intersection of $M$ with $\{k\}\times\N$. Define a map
\[
\chi_k([M_-,\sigma(g_1,\dots,g_n),M_+]) \coloneqq C_k(M_-) - C_k(M_+) \text{.}
\]
This is well defined up to equivalence, and yields a surjective group homomorphism $H_n(G)\to \Z$. Intuitively, when $G=\{1\}$ the map $\chi_k$ extracts the translation length of the ``eventual translation'' of the $k$th ray. (For $G\ne\{1\}$ there is not as clear-cut of a way to interpret $\chi_k$ in terms of ``eventual'' behavior.) It is easy to see that $\chi_1,\dots,\chi_{n-1}$ are linearly independent, and that $\chi_1+\cdots+\chi_n=0$ since $|M_-|=|M_+|$. The kernel of the direct sum of $\chi_1,\dots,\chi_{n-1}$ consists of all $[M_-,\sigma(g_1,\dots,g_n),M_+]$ such that $C_k(M_-)=C_k(M_+)$ for all $k$. Given such an element, up to composing with an element of $\Symfin([n]\times\N)$ we can assume that $\sigma$ takes each $M_+\cap(\{k\}\times\N)$ bijectively to $M_-\cap(\{k\}\times\N)$, so our element lies in the natural copy of $H_1(G)^n$ inside $H_n(G)$.
\end{proof}

When $G=\{1\}$ and $n\ge 3$, the commutator subgroup of $H_n$ is precisely $\Symfin([n]\times\N)$ \cite{houghton79}, so the above map $H_n\to \Z^{n-1}$ is the abelianization map. (When $n=1,2$ the commutator subgroup of $H_n$ is the finitely supported alternating group instead.) For $G$ non-abelian, the commutator subgroup is larger than $\Symfin([n]\times\N)$, since for example it contains elements of the form $[\emptyset,(g_1,\dots,g_n),\emptyset]$ for $g_1,\dots,g_n\in [G,G]$. It is difficult for us to see any general statement about the commutator subgroup of $H_n(G)$, or the abelianization map, or the rank of the abelianization, since there seems to just be too much variety in the possibilities for $G$.

The map $H_n(G)\to \Z^{n-1}$, while not necessarily the abelianization map, does allow us to ``promote'' amenability from $G$ to $H_n(G)$, along with other, related properties, as we now explain.

\begin{theorem}\label{thrm:amenable}
Let $G\le\Sym(\N)$ be shift-similar. For $\mathcal{P}$ any of the properties of being elementary amenable, being amenable, or containing no non-abelian free subgroups, we have that $H_n(G)$ has property $\mathcal{P}$ if and only if $G$ has property $\mathcal{P}$.
\end{theorem}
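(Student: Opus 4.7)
The plan is to prove both directions separately. The forward implication is immediate: by Observation~\ref{obs:G_embed}, $G$ embeds into $H_n(G)$, and each of the three properties under consideration is inherited by subgroups. For the reverse implication, I would first dispose of the finite case by invoking Observation~\ref{obs:finite_boring}: if $G$ is finite then $H_n(G)=H_n$, which is an extension of $\Z^{n-1}$ by the locally finite group $\Symfin([n]\times\N)$, hence is elementary amenable, and in particular has property $\mathcal{P}$.

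So assume $G$ is infinite. The key tool is the homomorphism $H_n(G)\twoheadrightarrow\Z^{n-1}$ furnished by Lemma~\ref{lem:map_to_free_abelian}, whose kernel $K$ is generated by $\Symfin([n]\times\N)$ together with the natural copy of $H_1(G)^n$. Since $\Z^{n-1}$ is abelian (and so has $\mathcal{P}$) and $\mathcal{P}$ is closed under group extensions, it suffices to prove that $K$ has $\mathcal{P}$. The subgroup $\Symfin([n]\times\N)$ is normal in all of $H_n(G)$, hence normal in $K$, and it is a directed union of finite symmetric groups, so it has $\mathcal{P}$. Observation~\ref{obs:extend_amenable} then reduces the problem to showing $H_1(G)^n$ has $\mathcal{P}$, and since $\mathcal{P}$ is closed under finite direct products, this in turn reduces to showing that $H_1(G)$ has $\mathcal{P}$.

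The final step splits on whether $G$ is strongly shift-similar. In the strong case, Proposition~\ref{prop:H1_equal} gives $H_1(G)=G$, which has $\mathcal{P}$ by hypothesis. Otherwise, Proposition~\ref{prop:HnG_shift_sim} shows that $H_1(G)$ is itself strongly shift-similar (and in particular contains $\Symfin(\N)$), and Corollary~\ref{cor:germ_union} expresses $\Germs(H_1(G))$ as an ascending directed union of copies of $\Germs(G)=G/\Symfin(\N)$. Since $\Germs(G)$ is a quotient of $G$ it inherits $\mathcal{P}$, and each of the three properties is closed under directed unions (by definition for elementary amenability, classically for amenability, and for the ``no non-abelian free subgroup'' property because any would-be copy of $F_2$ is two-generated and so must already lie in some term of the union). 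Thus $\Germs(H_1(G))$ has $\mathcal{P}$, and $H_1(G)$ is an extension of it by $\Symfin(\N)$, giving $H_1(G)$ property $\mathcal{P}$. I expect this last, non-strongly-shift-similar case to be the main obstacle, since it is the only point where the delicate germ-at-infinity machinery of Section~\ref{sec:shift_similar} is really needed; the rest of the argument is a clean reduction via the map to $\Z^{n-1}$.
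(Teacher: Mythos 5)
Your proposal is correct and follows essentially the same route as the paper's own proof: subgroup inheritance for the forward direction, then the map to $\Z^{n-1}$ from Lemma~\ref{lem:map_to_free_abelian} combined with Observation~\ref{obs:extend_amenable} to reduce to $H_1(G)$, handled via Proposition~\ref{prop:H1_equal} in the strongly shift-similar case and via the directed-union description of $\Germs(H_1(G))$ from Corollary~\ref{cor:germ_union} otherwise. The only (harmless) difference is cosmetic: you dispose of the finite case first and spell out the closure of each property under directed unions, whereas the paper treats the finite case last.
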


\begin{proof}
The ``only if'' direction follows since $G$ embeds into $H_n(G)$, and any of these properties is inherited by subgroups.

Now we do the ``if'' direction, so we assume $G$ has property $\mathcal{P}$ and want to show that $H_n(G)$ does too. First suppose $G$ is strongly shift-similar, so $G=H_1(G)$ by Proposition~\ref{prop:H1_equal} and thus $H_1(G)$ has property $\mathcal{P}$. Since property $\mathcal{P}$ is preserved under finite direct products, $H_1(G)^n$ has property $\mathcal{P}$. Since $\Symfin([n]\times\N)$ has property $\mathcal{P}$, Observation~\ref{obs:extend_amenable} and Lemma~\ref{lem:map_to_free_abelian} tell us that the kernel of $H_n(G)\to\Z^{n-1}$ has property $\mathcal{P}$. Finally, since $\Z^{n-1}$ has property $\mathcal{P}$ and an extension of groups with property $\mathcal{P}$ has property $\mathcal{P}$, we conclude that $H_n(G)$ has property $\mathcal{P}$. Now assume $G$ is infinite but not strongly shift-similar. By Corollary~\ref{cor:germ_union}, $\Germs(H_1(G))$ is a directed union of copies of $\Germs(G)$. Since property $\mathcal{P}$ is preserved under quotients, directed unions, and extensions by elementary amenable groups, we conclude that $H_1(G)$ has property $\mathcal{P}$. Now the same argument from the previous paragraph tells us that $H_n(G)$ has property $\mathcal{P}$. Finally, if $G$ is finite then by Observation~\ref{obs:finite_boring} $H_n(G)=H_n$, and so $H_n(G)$ has property $\mathcal{P}$.
\end{proof}

\subsection{Finite generation}\label{ssec:fin_gen}

Let $G\le \Sym(\N)$ be shift-similar and let $n\in\N$. Note that $H_n$ is a subgroup of $H_n(G)$, given by all elements of the form $[M_-,\sigma(1,\dots,1),M_+]$. Also, by Observation~\ref{obs:G_embed} we see that $G^n$ embeds as a subgroup of $H_n(G)$, consisting of all elements of the form $[\emptyset,(g_1,\dots,g_n),\emptyset]$. We know that $H_n$ is finitely generated for $n\ge 2$, and in this subsection we inspect finite generation of $H_n(G)$.

\begin{proposition}\label{prop:gens}
Let $G\le \Sym(\N)$ be shift-similar and let $n\ge 2$. Then the Houghton-like group $H_n(G)$ is generated by the copies of $H_n$ and $G^n$ referenced above.
\end{proposition}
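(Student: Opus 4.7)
The plan is to decompose an arbitrary element $[M_-, \sigma(g_1,\dots,g_n), M_+]$ of $H_n(G)$ into a product of elements from the subgroups $H_n$ and $G^n$. Using the multiplication formula for representative triples, I factor
\[
[M_-, \sigma(g_1,\dots,g_n), M_+] = [M_-, \sigma(1,\dots,1), M_+] \cdot [M_+, \id(g_1,\dots,g_n), M_+],
\]
where the first factor lies in $H_n$ by definition. The single-slot elements $[M_+, \id(1,\dots,g_k,\dots,1), M_+]$, as $k$ ranges over $\{1,\dots,n\}$, act nontrivially on disjoint quasi-rays and hence commute pairwise, and their product is the second factor above. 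It therefore suffices to show that each single-slot element lies in $\langle H_n, G^n\rangle$.

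The key step is to express $[M_+, \id(1,\dots,g_k,\dots,1), M_+]$ as a conjugate $\phi_k \cdot [\emptyset, (1,\dots,g_k,\dots,1), \emptyset] \cdot \phi_k^{-1}$ for a suitable $\phi_k \in H_n$. I will build $\phi_k$ so that $\phi_k(k, i) = \beta_{k, M_+}(i)$ for all $i$ (sending $\{k\}\times\N$ bijectively onto $Q(k, M_+)$), and so that $\phi_k$ is an eventual translation on every ray. Concretely, pick any $\ell_1 \neq k$ (available because $n \ge 2$); let $\phi_k$ send the first $p_k := |M_+ \cap (\{k\}\times\N)|$ points of $\{\ell_1\}\times\N$ bijectively onto the $p_k$ points of $M_+\cap(\{k\}\times\N)$, translate the remainder of $\{\ell_1\}\times\N$ down by $p_k$, and fix every other ray pointwise. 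Then $\phi_k$ is bijective with translation lengths $p_k$ on ray $k$, $-p_k$ on ray $\ell_1$, and $0$ on the remaining rays, so the translation lengths sum to zero and $\phi_k \in H_n$.

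Verifying the conjugation amounts to tracing points through $\phi_k^{-1}$, then $[\emptyset, (1,\dots,g_k,\dots,1), \emptyset]$, then $\phi_k$: a point $\beta_{k, M_+}(i) \in Q(k, M_+)$ pulls back to $(k, i)$, is sent by the $G^n$ element to $(k, g_k(i))$, and pushes forward to $\beta_{k, M_+}(g_k(i))$, exactly matching the target action; every other point pulls back off of $\{k\}\times\N$, is fixed by the $G^n$ element, and thus is fixed by the conjugate, matching the ``fixed on $M_+$ and on other quasi-rays'' behaviour of $[M_+, \id(1,\dots,g_k,\dots,1), M_+]$. The main obstacle is constructing $\phi_k$: one must move the finitely many points of $M_+\cap(\{k\}\times\N)$ off of ray $k$ while keeping the whole map an eventual translation, and this rerouting through an auxiliary ray $\ell_1$ is exactly where the hypothesis $n \ge 2$ is consumed.
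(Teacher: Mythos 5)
Your proof is correct and follows essentially the same route as the paper's: factor off the $H_n$-part, reduce to elements with a single nontrivial $G$-coordinate, and conjugate by an element of $H_n$ that reroutes the finite set $M_+$ through an auxiliary ray (which is exactly where $n\ge 2$ is used), landing in the copy of $G^n$. The only difference is stylistic — the paper first normalizes $M$ to $\{(1,1),\dots,(1,m)\}$ and computes inside the representative-triple calculus, whereas you build the conjugator $\phi_k$ explicitly for arbitrary $M_+$ and verify the conjugation pointwise.
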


\begin{proof}
Given an arbitrary element $[M_-,\sigma(g_1,\dots,g_n),M_+]$, left multiplication by some $[M_+,\tau,M_-]\in H_n$ lets us assume without loss of generality that $M_-=M_+$ and $\sigma=\id$. Thus, our element is of the form $[M,(g_1,\dots,g_n),M]$. This clearly equals
\[
[M,(g_1,1,\dots,1),M][M,(1,g_2,1,\dots,1),M]\cdots[M,(1,\dots,1,g_n),M]\text{,}
\]
so up to using parallel arguments, without loss of generality our element is of the form $[M,(g,1,\dots,1),M]$. Up to general reductions, we can also assume that $M\subseteq \{1\}\times\N$, and up to multiplication by elements of $H_n$ we can assume that $M=\{(1,1),\dots,(1,m)\}$ for some $m\in\N$. Let $M'=\{(2,1),\dots,(2,m)\}$ (here we are using that $n\ge 2$). Let $\mu\colon M\to M'$ be the bijection $(1,i)\mapsto (2,i)$, so $[M',\mu,M]\in H_n$. Upon conjugating our element $[M,(g,1,\dots,1),M]$ by $[M',\mu,M]$, we get $[M',\mu,M][M,(g,1,\dots,1),M][M,\mu^{-1},M'] = [M',(g,1,\dots,1),M']$, and performing reductions this equals $[\emptyset,(g,1,\dots,1),\emptyset]\in G^n$, so we are done.
\end{proof}

\begin{corollary}\label{cor:fin_gen}
For $n\ge 2$, if $G$ is finitely generated then so is $H_n(G)$.
\end{corollary}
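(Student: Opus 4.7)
The plan is to invoke Proposition~\ref{prop:gens} directly, and then observe that each of the two generating pieces it produces is finitely generated under the hypothesis.

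First I would recall that the Houghton group $H_n$ is itself finitely generated whenever $n \ge 2$. This is classical, going back to Houghton's original paper \cite{houghton79}: one can take the ``slide'' generators that translate along one ray into another (e.g., for $n = 2$, the analog of $\slide$ and $\flip$ from Observation~\ref{obs:E2_fin_gen}), and there are $n-1$ independent such slides, together with finitely many transpositions, which suffice. Since the statement is well established, I would simply cite it rather than reprove it.

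Next, assuming $G$ is finitely generated with generating set $S$, the direct product $G^n$ is finitely generated as well: the set
\[
S^{(n)} \coloneqq \bigcup_{k=1}^{n}\{(1,\dots,1,s,1,\dots,1)\mid s\in S\}\text{,}
\]
where $s$ sits in the $k$th coordinate, is a finite generating set. Under the embedding $G^n \hookrightarrow H_n(G)$ from Observation~\ref{obs:G_embed}, this gives a finite collection of elements of $H_n(G)$ generating the natural copy of $G^n$.

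Finally, by Proposition~\ref{prop:gens}, the Houghton-like group $H_n(G)$ is generated by the copies of $H_n$ and $G^n$ inside it. The union of a finite generating set for $H_n$ with the image of $S^{(n)}$ is therefore a finite generating set for $H_n(G)$.

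There is no real obstacle here: Proposition~\ref{prop:gens} has done all the work, so the corollary is essentially just the two observations that $H_n$ and $G^n$ are each finitely generated, combined with the standard fact that a group generated by two finitely generated subgroups is finitely generated. The only point that requires any care is citing (rather than redoing) the finite generation of $H_n$ for $n \ge 2$.
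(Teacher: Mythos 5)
Your proposal is correct and follows exactly the paper's argument: invoke Proposition~\ref{prop:gens} to see that $H_n(G)$ is generated by the copies of $H_n$ and $G^n$, and note that both are finitely generated (the former classically for $n\ge 2$, the latter because $G$ is). The extra detail you supply about explicit generating sets is harmless but not needed.
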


\begin{proof}
By Proposition~\ref{prop:gens}, $H_n(G)$ is generated by $H_n$ and $G^n$, which are both finitely generated.
\end{proof}

When $n=1$, we have the following:

\begin{observation}\label{obs:H1_fin_gen}
Let $G\le \Sym(\N)$ be shift-similar and finitely generated. Then $H_1(G)$ is finitely generated if and only if $G$ is strongly shift-similar.
\end{observation}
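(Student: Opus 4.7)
The plan for the forward direction is to invoke Proposition~\ref{prop:H1_equal} directly: if $G$ is strongly shift-similar, then $H_1(G)=G$, which is finitely generated by hypothesis.

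For the reverse direction, I would prove the contrapositive: assume $G$ is not strongly shift-similar and conclude that $H_1(G)$ is not finitely generated. This splits into two cases depending on whether $G$ is finite or infinite. If $G$ is finite, then Lemma~\ref{lem:finite_shift_sim} gives $G=S_m$ for some $m$, and Observation~\ref{obs:finite_boring} then yields $H_1(G)=H_1=\Symfin(\N)$. This group is well known not to be finitely generated: any finite subset of $\Symfin(\N)$ has support contained in some initial segment $\{1,\dots,N\}$, so it generates a subgroup of $S_N$, which misses transpositions supported beyond $N$.

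The substantive case is the infinite one, where I would invoke Corollary~\ref{cor:germ_union}. That corollary presents $\Germs(H_1(G))$ as the properly ascending directed union of the subgroups $(\shift_\infty^m)^{-1}(\Germs(G))$ for $m\in\N$, each isomorphic to $\Germs(G)$ and each strictly contained in the next. An $\N$-indexed properly ascending union of subgroups cannot coincide with a finitely generated group, since any finite generating set would lie inside a single stage $(\shift_\infty^{m_0})^{-1}(\Germs(G))$, forcing the union to terminate at $m_0$ and contradicting proper ascent. Therefore $\Germs(H_1(G))=H_1(G)/\Symfin(\N)$ is not finitely generated, and since finite generation passes to quotients, neither is $H_1(G)$.

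All the heavy lifting has already been done in Proposition~\ref{prop:H1_equal} and Corollary~\ref{cor:germ_union}, so the statement falls out once one combines those with the standard non-finite-generation of $\Symfin(\N)$. For this reason I do not anticipate any genuine obstacle; the only step deserving a moment of care is the general principle that a properly ascending $\N$-indexed union of subgroups cannot be finitely generated, which is a one-line pigeonhole argument on where the generators sit.
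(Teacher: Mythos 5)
Your proof is correct and follows essentially the same route as the paper: Proposition~\ref{prop:H1_equal} for the forward direction, and Corollary~\ref{cor:germ_union} plus the pigeonhole observation that a properly ascending $\N$-indexed union of subgroups is never finitely generated (applied to the quotient $\Germs(H_1(G))$) for the converse. Your explicit treatment of the finite case is a small bonus the paper's proof glosses over, since Corollary~\ref{cor:germ_union} formally requires $G$ infinite; otherwise the two arguments coincide.
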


\begin{proof}
If $G$ is strongly shift-similar then $H_1(G)=G$ by Proposition~\ref{prop:H1_equal}, so this is immediate. If $G$ is not strongly shift-similar then Corollary~\ref{cor:germ_union} says $\Germs(H_1(G))$ is a properly ascending directed union, and so cannot be finitely generated. Since $\Germs(H_1(G))$ is a quotient of $H_1(G)$, the latter cannot be finitely generated either.
\end{proof}

We should reiterate Question~\ref{quest:not_strong}, which asks whether shift-similar but not strongly shift-similar groups even exist, since a ``no'' answer would render Observation~\ref{obs:H1_fin_gen} uninteresting.

An obvious question regards higher finiteness properties. Since $H_n$ is of type $\F_{n-1}$ by Brown \cite{brown87}, one naturally conjectures that if a shift-similar group $G$ is of type $\F_{n-1}$, then so is $H_n(G)$. Moreover, since for strongly shift-similar $G$ we have $H_1(G)=G$, one might conjecture for $n\ge 2$ as well that $H_n(G)$ could sometimes have stronger finiteness properties than $H_n$ (assuming $G$ does). However, beyond finite generation it is surprisingly difficult to say anything about higher finiteness properties of $H_n(G)$.

For the experts, one can construct an analog of the contractible cube complex for $H_n$ from \cite{brown87,lee12}, inspired by the complexes for R\"over--Nekrashevych groups from \cite{belk16,skipper21}. The vertex stabilizers are copies of $G^n$, but for $G$ infinite we lose many nice local finiteness properties, for example the analog of \cite[Lemma~3.6]{skipper21} fails, and it is not clear that things can be set up to apply Brown's Criterion. Additionally, the resulting descending links are much more difficult than in the $G=\{1\}$ case. All in all, this makes it hard to tell how to say anything about higher finiteness properties for $H_n(G)$. Finally, it turns out to even be difficult to begin with to find examples of shift-similar groups with stronger finiteness properties than finite generation. In other words, it is not clear whether it would actually be interesting to be able to leverage, say, finite presentability of $G$ to get finite presentability of $H_n(G)$, given that we have so few examples of finitely presented shift-similar groups anyway.

Let us boil this discussion down the following two questions:

\begin{question}\label{quest:inherit_fin_props}
If a shift-similar group $G$ is finitely presented and $n\ge 3$, then is $H_n(G)$ finitely presented? If $G$ is of type $\F_m$ and $n\ge m+1$, then is $H_n(G)$ of type $\F_m$? If $G$ is strongly shift-similar, then do these results hold for all $n$?
\end{question}

For example, $H_n(H_k)\cong H_{nk}$ by Proposition~\ref{prop:HnG_shift_sim}, so this question has a positive answer when $G=H_k$. It also has a positive answer for $G=E_k$, since it is easy to check that $H_n(E_k)$ contains $H_n(H_k)$ with finite index.

\begin{question}\label{quest:examples_fin_props}
What sorts of finitely presented groups are shift-similar, beyond $E_n$ and $H_n$ for $n\ge 3$? What about shift-similar groups of type $\F_m$, beyond $E_n$ and $H_n$ for $n\ge m+1$?
\end{question}

\bibliographystyle{alpha}

\end{document}